   \renewcommand\normalsize{%
    \@setfontsize\normalsize\@xpt{12.5pt}%
    \abovedisplayskip=3 mm plus6pt minus 4pt
    \belowdisplayskip=3 mm plus6pt minus 4pt
    \abovedisplayshortskip=0.0 mm plus6pt
    \belowdisplayshortskip=2 mm plus4pt minus 4pt
    \let\@listi\@listI}%
   \renewcommand\small{%
    \@setfontsize\small{8.5pt}\@xpt
    \abovedisplayskip 8.5\p@ \@plus3\p@ \@minus4\p@
    \abovedisplayshortskip \z@ \@plus2\p@
    \belowdisplayshortskip 4\p@ \@plus2\p@ \@minus2\p@
    \def\@listi{\leftmargin\leftmargini
                \parsep 0\p@ \@plus1\p@ \@minus\p@
                \topsep 4\p@ \@plus2\p@ \@minus4\p@
                \itemsep0\p@}%
    \belowdisplayskip \abovedisplayskip}
    \renewcommand\normalsize{%
    \@setfontsize\normalsize\@xpt\@xiipt
    \abovedisplayskip=3 mm plus6pt minus 4pt
    \belowdisplayskip=3 mm plus6pt minus 4pt
    \abovedisplayshortskip=0.0 mm plus6pt
    \belowdisplayshortskip=2 mm plus4pt minus 4pt
    \let\@listi\@listI}%
   \renewcommand\small{%
    \@setfontsize\small\@viiipt{9.5pt}%
    \abovedisplayskip 8.5\p@ \@plus3\p@ \@minus4\p@
    \abovedisplayshortskip \z@ \@plus2\p@
    \belowdisplayshortskip 4\p@ \@plus2\p@ \@minus2\p@
    \def\@listi{\leftmargin\leftmargini
                \parsep 0\p@ \@plus1\p@ \@minus\p@
                \topsep 4\p@ \@plus2\p@ \@minus4\p@
                \itemsep0\p@}%
    \belowdisplayskip \abovedisplayskip}
   \renewcommand\normalsize{%
    \@setfontsize\normalsize{9.5pt}{11.5pt}%
    \abovedisplayskip=3 mm plus6pt minus 4pt
    \belowdisplayskip=3 mm plus6pt minus 4pt
    \abovedisplayshortskip=0.0 mm plus6pt
    \belowdisplayshortskip=2 mm plus4pt minus 4pt
    \let\@listi\@listI}%
   \renewcommand\small{%
    \@setfontsize\small\@viiipt{9.25pt}%
    \abovedisplayskip 8.5\p@ \@plus3\p@ \@minus4\p@
    \abovedisplayshortskip \z@ \@plus2\p@
    \belowdisplayshortskip 4\p@ \@plus2\p@ \@minus2\p@
    \def\@listi{\leftmargin\leftmargini
                \parsep 0\p@ \@plus1\p@ \@minus\p@
                \topsep 4\p@ \@plus2\p@ \@minus4\p@
                \itemsep0\p@}%
    \belowdisplayskip \abovedisplayskip}
\DeclareMathAlphabet{\mathcal}{OMS}{cmsy}{m}{n}
\newcommand{\vtms}{\mkern0.3\thinmuskip}
\newcommand{\coeff}{\mathrm{coeff}}
\newcommand{\opp}{^{\mathrm{o}}}
\newcommand{\lside}{_{\mathsf{L}}}
\newcommand{\rside}{_{\mathsf{R}}}
\newcommand{\powerset}{\mathcal{P}}
\DeclareMathOperator{\SD}{\mathrm{SD}}
\DeclareMathOperator{\SA}{\mathrm{SA}}
\DeclareMathOperator{\WD}{\mathrm{WD}}
\DeclareMathOperator{\WA}{\mathrm{WA}}
\DeclareMathOperator{\D}{\mathrm{D}}
\DeclareMathOperator{\A}{\mathrm{A}}
\DeclareMathOperator{\Pos}{\mathrm{Pos}}
\spnewtheorem*{notation}{Notation.}{\it}{\rm}
\begin{document}
\title{\textit{W-}graph ideals and biideals}\label{Pre}
\author{Robert B. Howlett \and Van Minh Nguyen}
\institute{   Van Minh Nguyen \at
              University of Sydney \\
              Fax: +61 2 9351 4534\\
              \email{VanNguyen@maths.usyd.edu.au}
}
\date{}
\let\makeheadbox\relax
\maketitle

\setitemindent{xx(iii)}
\begin{abstract}
We further develop the theory of \(W\!\)-graph ideals, first introduced
in~\cite{howvan:wgraphDetSets}. We discuss \(W\!\)-graph subideals, and
induction and restriction of \(W\!\)-graph ideals for parabolic
subgroups. We introduce \(W\!\)-graph biideals: those \(W\!\)-graph
ideals that yield \((W\times W^{\mathrm o})\)-graphs, where \(W^{\mathrm o}\)
is the group opposite to~\(W\). We determine all \(W\!\)-graph ideals and
biideals in finite Coxeter groups of rank~2.

\keywords{Coxeter groups \and Hecke algebras \and \(W\!\)-graphs \and
Kazhdan--Lusztig polynomials \and cells}
\end{abstract}

\section{Introduction}
\label{intro}
Let \((W,S)\) be a Coxeter system and \(\mathcal{H}(W)\) its Hecke
algebra over \(\mathbb{Z}[q,q^{-1}]\), the ring of Laurent
polynomials in the indeterminate \(q\). The Coxeter system
\((W,S)\) is naturally equipped with the left weak order and
the Bruhat order, denoted by \(\leqslant\lside\) and \(\leqslant\), respectively.
In~\cite{howvan:wgraphDetSets}, an algorithm was given to produce
from an ideal (down set) \(\mathscr I\) of \((W,\leqslant\lside)\) and a subset
\(J\) of \(S \setminus \mathscr I\) a weighted digraph \(\Gamma(\mathscr I\!,\,J)\) with
vertices indexed by the elements of \(\mathscr I\) and coloured with
subsets of \(S\). If, in the terminology of \cite{howvan:wgraphDetSets},
\(\mathscr I\) is a \(W\!\)-graph ideal with respect to \(J\), then
\(\Gamma(\mathscr I\!,\,J)\) is a \(W\!\)-graph. In the present paper we use the
terminology ``\((\mathscr I\!,\,J)\) is a \(W\!\)-graph ideal'' to mean the same
thing as ``\(\mathscr I\) is a \(W\!\)-graph ideal with respect to \(J\)\,''.

The algorithm referred to above proceeds chiefly by recursively
computing polynomials \(q_{y,w}\) for all \(y,w \in \mathscr{I}\) such that
\(y < w\). These polynomials are anologous to Kazhdan--Lusztig polynomials, and the
Kazhdan--Lusztig \(W\!\)-graph (\cite{kazlus:coxhecke}) and Deodhar's parabolic
analogues (\cite{deo:paraKL}) are obtained as special cases. Moreover, it was shown
in~\cite{nguyen:wgideals2} that \(W\!\)-graphs for the Kazhdan--Lusztig left cells
that contain longest elements of standard parabolic subgroups can be constructed this way.
In type \(A\), this provides a practical procedure for calculating a \(W\!\)-graph
for a cell module (which is known to be isomorphic to the corresponding Specht module) from
standard tableaux of a given shape.

In general, it is still unknown which subsets of \(W\) generate \(W\!\)-graph ideals, and
the problem of describing them combinatorially is still open, even in type~\(A\). Preliminary
results concerning these matters in type~\(A\) are established
in~\cite{nguyen:wgdeterminingelmA}, using the results of the present paper combined with
those of~\cite{howvan:wgraphDetSets,nguyen:wgideals2}.

In this paper, we define a \(W\!\)-graph subideal of a \(W\!\)-graph ideal
\((\mathscr{I}\!,\,J)\) to be a \(W\!\)-graph ideal \((\mathscr{L},K)\) such
that \(\mathscr{L}\subseteq\mathscr{I}\) and \(K=J\).
It was shown in~\cite{nguyen:wgideals2} that if \((\mathscr{I}\!,\,J)\) is a
\(W\!\)-graph ideal and \(\mathscr{L}\subseteq \mathscr{I}\!\) then
\((\mathscr{L},J)\) is a \(W\!\)-graph subideal of \((\mathscr{I}\!,\,J)\)
if the complement \(\mathscr{I}\setminus\mathscr{L}\) is closed when regarded
as a subset of the vertex set of \(\Gamma=\Gamma(\mathscr{I}\!,\,J)\)
(in the sense that it is an ideal with respect to the Kazhdan--Lusztig
preorder~\(\leqslant_\Gamma\) on the vertex set). We call
\(W\!\)-graph subideals of this form \textit{strong\/} \(W\!\)-graph subideals.
We show that this strong \(W\!\)-graph subideal relation is preserved by induction of
\(W\!\)-graph ideals, as defined in~\cite[Section 9]{howvan:wgraphDetSets}. More precisely,
if \(W_K\) is a standard parabolic subgroup of~\(W\) (where \(K\subseteq S\)),
and \(D_K\) denotes the set of minimal length representatives of left cosets of
\(W_K\) in~\(W\), then \((D_K\mathscr{L},J)\) is a strong \(W\!\)-graph subideal
of \((D_K\mathscr{I}\!,\,J)\) if \((\mathscr{L},J)\) is a strong \(W_K\)-graph
subideal of \((\mathscr{I}\!,\,J)\).

Recall that the original construction given by Kazhdan and Lusztig
in~\cite{kazlus:coxhecke} produces a \((W\times W^{\mathrm o})\)-graph,
where \(W^{\mathrm o}\) is the Coxeter group opposite to \(W\). Thus it is natural
to seek a generalization the results of~\cite{howvan:wgraphDetSets} that
produces \((W\times W^{\mathrm o})\)-graphs. This is the motivation for the
\(W\!\)-graph biideal concept.

As mentioned earlier, for an arbitrary Coxeter system \((W, S)\), the algorithm
in~\cite{howvan:wgraphDetSets} takes as
input an ideal \(\mathscr I\) of \((W,\leqslant\lside)\) and a subset
\(J\) of \(S \setminus \mathscr I\), and produces a (decorated) graph
\(\Gamma(\mathscr I\!,\,J)\) as
output. If \(\mathscr I\) is a \(W\!\)-graph ideal with respect to \(J\), then
\(\Gamma(\mathscr I\!,\,J)\) is \(W\!\)-graph. It is natural to ask whether this
condition characterizes
\(W\!\)-graph ideals. The answer to this question is affirmative: \(W\!\)-graph ideals
are precisely the ideals for which the above construction produces \(W\!\)-graphs.
This is useful in practice as a computational means of determining whether or
not a given ideal is a \(W\!\)-graph ideal.

In \cite[Section 9]{howvan:wgraphDetSets} it was shown that if
\(J\subseteq K\subseteq S\) and \((\mathscr I_0,J)\) is a \(W_K\)-graph
ideal then \((D_K\mathscr{I}_0,J)\) is a \(W\!\)-graph ideal. This
construction corresponds to inducing modules.
In the present paper we prove a dual result relating to
restriction of modules: if \((\mathscr I\!,\,J)\) is a \(W\!\)-graph ideal 
and \(K \subseteq S\) then for each right coset \(W_Kd\subseteq W\) the
intersection \(\mathscr{I}\cap W_Kd\) is a translate of a 
\(W_K\)-graph ideal. Indeed, for each \(d\in D_K^{-1}\), the set of
minimal right coset representatives for~\(W_K\), the set
\(\mathscr{I}_{d}=W_K \cap \mathscr{I}d^{-1}\) is a \(W_K\)-graph ideal with
respect to \(K\cap dJd^{-1}\). Thus
\begin{equation*}
\mathscr I = \bigsqcup_{d\in D_K^{-1}\cap\mathscr{I}} \mathscr{I}_{d}d,
\end{equation*}
where \((\mathscr{I}_{d},K \cap dJd^{-1})\) is a \(W_K\)-graph ideal in each case.

Finally, as an example, we provide a complete list of \(W\!\)-graph ideals and biideals
for Coxeter groups of type \(I_2(m)\), where \(m \geqslant 2\).

The present paper is organized as follows. In Section~\ref{sec:1}, we provide
basic definitions and facts concerning Coxeter groups and Hecke algebras. In
Section~\ref{sec:2} we review the definition of a \(W\!\)-graph and related
concepts, and in Section~\ref{sec:3} we recall the notion of a \(W\!\)-graph ideal and
the procedure for constructing a \(W\!\)-graph from a \(W\!\)-graph ideal.
In Section~\ref{sec:4} we define \(W\!\)-graph subideals and
show that parabolic induction preserves the strong \(W\!\)-graph subideal
relation, as described above.
In Section~\ref{sec:extra} we define \(W\!\)-graph biideals and show that they
do indeed produce \((W\times W^{\mathrm o})\)-graphs. Section~\ref{sec:5} deals mainly with
the computational characterization of \(W\!\)-graph ideals.
In Section~\ref{sec:6} we prove the decomposition formula mentioned above: if \(\mathscr{I}\) is a
\(W\!\)-graph ideal then the intersection of \(\mathscr{I}\) with any right
coset of any standard parabolic subgroup \(W_K\) is a translate of a
\(W_K\)-graph ideal. The paper ends with Section~\ref{sec:ext}, in which 
\(W\!\)-graph ideals and biideals are investigated for Coxeter groups
of rank~\(2\).

\section{Coxeter groups and Hecke algebras}
\label{sec:1}
Let \((W,S)\) be a Coxeter system and \(l\) the length function
on \(W\) determined by~\(S\). The Bruhat order, denoted by \(\leqslant\),
is the partial order on \(W\) such that \(1\) (the identity element) is
the unique minimal element and the following property holds. 

\begin{lemma}\textup{\cite[Theorem 1.1]{deo:bruhat}}\label{lifting1}
\ Let \(s \in S\) and \(u,\,w\in W\) satisfy \(u\leqslant su\) and \(w\leqslant sw\).
Then \(u\leqslant w\) if and only if \(u\leqslant sw\), and \(u\leqslant sw\)
if and only if \(su\leqslant sw\).
\end{lemma}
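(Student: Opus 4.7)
The natural tool for this lemma is the subword characterization of the Bruhat order: $u\leqslant w$ iff for some (equivalently, every) reduced expression $w=s_{i_1}s_{i_2}\cdots s_{i_l}$ there is a subsequence of indices $j_1<\cdots<j_k$ for which $s_{i_{j_1}}\cdots s_{i_{j_k}}$ is a reduced expression for $u$. The plan is to fix a reduced expression $w=s_1s_2\cdots s_l$; by the hypothesis $w\leqslant sw$ we have $l(sw)=l(w)+1$, so $ss_1s_2\cdots s_l$ is reduced for $sw$. Similarly $l(su)=l(u)+1$, so prepending $s$ to any reduced expression for $u$ yields a reduced expression for $su$. These two facts will let both equivalences be read off directly from the subword property.

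For the first equivalence, the implication $u\leqslant w\Rightarrow u\leqslant sw$ is immediate, since a reduced subword of $s_1\cdots s_l$ is also a reduced subword of $ss_1\cdots s_l$. For the converse, suppose $u\leqslant sw$ and take a reduced subword of $ss_1\cdots s_l$ equal to~$u$. If this subword used the leading~$s$, then $u$ would have a reduced expression of the form $s\cdot u'$, forcing $l(su)=l(u)-1$ and contradicting the hypothesis $u\leqslant su$. Hence the subword lies entirely inside $s_1\cdots s_l$, giving $u\leqslant w$. For the second equivalence, the implication $su\leqslant sw\Rightarrow u\leqslant sw$ follows from transitivity together with $u\leqslant su$. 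Conversely, if $u\leqslant sw$ then by what we just proved $u\leqslant w$; choosing a reduced subword of $s_1\cdots s_l$ that spells~$u$ and prepending the leading~$s$ yields a reduced subword of $ss_1\cdots s_l$ spelling~$su$ (reduced because $l(su)=l(u)+1$), so $su\leqslant sw$.

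The only delicate point is justifying that when the subword representing~$u$ uses the initial~$s$, the remaining letters yield a reduced expression for $su$ (so that the length-decreasing conclusion really does force $u>su$). This is where the subword property is used in its strong form, i.e., that the chosen subsequence itself must be reduced, not merely that some deletion yields $u$ as a product. Once this is in hand the whole argument is just a bookkeeping exercise with reduced expressions, and no further structural input about $W$ is required.
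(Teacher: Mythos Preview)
Your argument via the subword characterization of the Bruhat order is correct. The paper does not supply its own proof of this lemma; it simply cites Deodhar's paper \cite{deo:bruhat}, where the statement appears as Theorem~1.1 (the so-called \emph{lifting property} or \emph{Property~Z}).

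One caveat worth recording: in some treatments the subword property is itself derived from the lifting lemma, so you should make sure you are invoking a version of the subword property established independently (for example via the Strong Exchange Condition, as in Bj\"orner--Brenti). Once that is in hand, your argument is sound; the point you flag in your final paragraph---that the chosen subexpression may be taken to be reduced---is exactly the form of the subword property needed, and it is standard.
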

The following result follows easily from Lemma~\ref{lifting1}
\begin{lemma}\label{cancellation}
Let \(u,\,v,\,w\in W\) with \(l(uv)=l(u)+l(v)\) and \(l(uw)=l(u)+l(w)\).
Then \(uv\leqslant uw\) if and only if \(v\leqslant w\).
\end{lemma}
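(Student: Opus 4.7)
The plan is to prove the lemma by induction on $l(u)$, using Lemma~\ref{lifting1} in the crucial single-generator step.

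The base case $l(u)=0$, i.e.\ $u=1$, is immediate. For the inductive step, I would pick a reduced factorization $u=su'$ with $s\in S$ and $l(u')=l(u)-1$, so that $u'\leqslant su'=u$. The first sub-task is to verify that the length hypotheses carry over to~$u'$: since $l(uv)=l(u)+l(v)=l(u')+1+l(v)$ and $uv=s(u'v)$, the element $u'v$ satisfies $l(u'v)\leqslant l(u')+l(v)=l(uv)-1$ while $l(u'v)\geqslant l(uv)-1$, forcing $l(u'v)=l(u')+l(v)$ and simultaneously $l(s(u'v))=l(u'v)+1$, i.e.\ $u'v\leqslant s(u'v)=uv$. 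The same argument gives $l(u'w)=l(u')+l(w)$ and $u'w\leqslant uw$.

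With these length identities in hand, Lemma~\ref{lifting1} applies directly with $s$ and with the pair $(u'v,\, u'w)$ in the roles of $(u,w)$: from $u'v\leqslant s(u'v)$ and $u'w\leqslant s(u'w)$ one concludes $u'v\leqslant u'w$ if and only if $u'v\leqslant s(u'w)=uw$ if and only if $s(u'v)\leqslant s(u'w)$, i.e.\ $uv\leqslant uw$. The inductive hypothesis applied to $u'$ then gives $u'v\leqslant u'w$ if and only if $v\leqslant w$, completing the step.

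The bulk of the proof is really just this single-generator reduction fed to induction, so the only place where care is needed is checking that the hypotheses $l(uv)=l(u)+l(v)$ and $l(uw)=l(u)+l(w)$ do propagate after stripping the leading simple reflection~$s$; that is where one must invoke the general fact that multiplying by a simple reflection changes length by exactly one, together with subadditivity of~$l$. Once this bookkeeping is in place, Lemma~\ref{lifting1} does all the real work, so I do not anticipate a serious obstacle.
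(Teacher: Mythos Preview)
Your proof is correct and is exactly the argument one would expect: the paper itself gives no details beyond remarking that the lemma ``follows easily from Lemma~\ref{lifting1}'', and your induction on \(l(u)\), with Lemma~\ref{lifting1} handling the single-generator reduction, is the natural way to unpack that remark. The bookkeeping on lengths is handled correctly.
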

As well as the Bruhat order, we shall make extensive use of the left weak order,
defined by the condition that if \(v,\,w\in W\) then \(v\leqslant\lside w\) if and only if
\(l(w)=l(wv^{-1})+l(v)\). The right weak order is defined similarly, and
satisfies \(v\leqslant\rside w\) if and only if \(v^{-1}\leqslant\lside w^{-1}\).

For each \(J\subseteq S\) let \(W_{J}\) be the (standard parabolic)
subgroup of \(W\) generated by~\(J\), and let \(D_{J}\) the set of
distinguished (or minimal) representatives of the left cosets of
\(W_{J}\) in~\(W\). Thus each \(w \in W\) has a unique factorization
\(w = du\) with \(d \in D_{J}\) and \(u \in W_{J}\), and \(l(du) = l(d) +l(u)\)
holds for all \(d \in D_{J}\) and \(u \in W_{J}\). It is easily seen
that \(D_J\) is an ideal of \((W,\leqslant\lside)\): if \(w\in D_J\) and \(v\in W\)
with \(v\leqslant\lside w\) then \(v\in D_J\).

If \(L \subseteq J\subseteq S\) then we define \(D_L^J = W_J \cap D_L\), the set
of minimal representatives of the left cosets of \(W_L\) in~\(W_J\).

If \(W_J\) is finite then we denote the longest element of \(W_{J}\)
by~\(w_{J}\). If \(W\) is finite then \(D_{J} =
\{\,w \in W \mid w \leqslant\lside d_{J}\,\}\) (\cite[Lemma
2.2.1]{gecpfei:charHecke}), where \(d_{J}\) is
the unique element in \(D_{J} \cap w_{S}W_{J}\).

The map \(W \rightarrow D_{J}\) given by \(w = du \mapsto d\)
preserves the Bruhat order, as the following proposition shows.

\begin{proposition}\textup{\cite[Lemma 3.5]{deo:bruhat}}\label{orderPresr}
\ Let \(w_{1} = d_{1}u_{1}\) and \(w_{2} = d_{2}u_{2}\), where \(d_{1}, d_{2} \in D_{J}\) and
\(w_{1}, w_{2} \in W_{J}\). If \(w_{1} \leqslant w_{2}\) then \(d_{1} \leqslant d_{2}\).
\end{proposition}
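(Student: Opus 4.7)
The plan is to induct on the length sum $l(u_1)+l(u_2)$, exploiting Lemma~\ref{lifting1} in its right-multiplicative form (obtained by replacing each element with its inverse, since inversion preserves the Bruhat order and swaps left and right weak descents).

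The base case $u_1=u_2=1$ is immediate: then $d_1=w_1\leqslant w_2=d_2$. For the inductive step I first handle the case $u_2\ne 1$. Choose $s\in J$ with $u_2s<u_2$. Since $d_2\in D_J$ and $u_2s\in W_J$, lengths add in $d_2(u_2s)$, so $w_2s=d_2\cdot(u_2s)$ is the canonical $(D_J,W_J)$-factorization of $w_2s$ and $l(w_2s)<l(w_2)$. I then split on whether $w_1s<w_1$ or $w_1s>w_1$.

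If $w_1s<w_1$, the same length-additivity (applied to $d_1\in D_J$ and $s\in J$) forces $u_1s<u_1$, so $w_1s=d_1\cdot(u_1s)$ is the canonical factorization of $w_1s$. Applying the right-hand form of Lemma~\ref{lifting1} with $u=w_1s$ and $w=w_2s$ (both satisfying $u\leqslant us$ and $w\leqslant ws$) turns the known relation $us=w_1\leqslant w_2=ws$ into $u\leqslant w$, i.e.\ $w_1s\leqslant w_2s$. The inductive hypothesis now applies, since $l(u_1s)+l(u_2s)<l(u_1)+l(u_2)$, and yields $d_1\leqslant d_2$. If instead $w_1s>w_1$, Lemma~\ref{lifting1} with $u=w_1$ and $w=w_2s$ gives $u\leqslant w\Leftrightarrow u\leqslant ws$; since $w_1\leqslant w_2=ws$, we conclude $w_1\leqslant w_2s$. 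Induction on $l(u_1)+l(u_2s)<l(u_1)+l(u_2)$ again delivers $d_1\leqslant d_2$.

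The remaining case $u_2=1$, $u_1\ne 1$ is symmetric: pick $s\in J$ with $u_1s<u_1$, note $w_1s<w_1\leqslant w_2=d_2$ and $w_1s=d_1\cdot(u_1s)$, and invoke the inductive hypothesis on the factorizations $(d_1,u_1s)$ and $(d_2,1)$.

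The principal obstacle is bookkeeping rather than genuine difficulty: at every step one must verify that right-multiplication by $s\in J$ disturbs only the $W_J$-component and so produces a canonical factorization with the same $D_J$-part, and one must translate Lemma~\ref{lifting1} into its right-handed form before feeding in the hypotheses. Once these are in hand, the induction on $l(u_1)+l(u_2)$ proceeds mechanically.
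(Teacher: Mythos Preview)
Your proof is correct. The paper itself does not supply a proof of this proposition; it merely cites Deodhar's original article \cite{deo:bruhat}, so there is no in-paper argument to compare against. Your induction on $l(u_1)+l(u_2)$, together with the right-handed form of Lemma~\ref{lifting1}, is a clean and standard route to the result, and each step---the preservation of the $D_J$-component under right multiplication by $s\in J$, the case split on whether $s$ is a right descent of $w_1$, and the residual case $u_2=1$---is handled correctly.
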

The following result will be used frequently later.

\begin{lemma}\textup{\cite[Lemma 2.1 (iii)]{deo:paraKL}}\label{deo1}
\ Let \(J \subseteq S\). For each \(s \in S\) and each \(w \in
D_{J}\), exactly one of the following occurs:
\begin{itemize}[topsep=1 pt]
\item[\textup{(i)}] \(l(sw) < l(w)\) and \(sw \in D_{J}\);
\item[\textup{(ii)}] \(l(sw) > l(w)\) and \(sw \in D_{J}\);
\item[\textup{(iii)}] \(l(sw) > l(w)\) and \(sw \notin D_{J}\), and \(w^{-1}sw \in J\).
\end{itemize}
\end{lemma}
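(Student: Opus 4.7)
The statement is a standard trichotomy for minimal coset representatives, so my plan is to argue directly from the defining property \(D_J=\{w\in W\mid l(wt)>l(w)\text{ for all }t\in J\}\) and the lifting property (Lemma~\ref{lifting1}). Since \(s\in S\), one of \(l(sw)<l(w)\) or \(l(sw)>l(w)\) must hold, so the three cases cover all possibilities, and (i) is incompatible with (ii) or (iii). It remains to establish the membership/non-membership claims in each case.

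For case (i), suppose \(l(sw)<l(w)\); I need to show \(sw\in D_J\), i.e.\ \(l((sw)t)>l(sw)\) for every \(t\in J\). Since \(w\in D_J\) we have \(l(wt)=l(w)+1\), and then
\[
l((sw)t)=l(s(wt))\geqslant l(wt)-1=l(w)=l(sw)+1>l(sw),
\]
which handles this case. For case (ii) there is nothing to check beyond the hypotheses.

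For case (iii), assume \(l(sw)>l(w)\) and \(sw\notin D_J\); I need to exhibit \(t\in J\) with \(w^{-1}sw=t\). By the characterisation of \(D_J\), there is some \(t\in J\) with \(l((sw)t)<l(sw)\), so \(l(swt)=l(sw)-1=l(w)\). The plan is to apply Lemma~\ref{lifting1} with the roles \(u:=swt\) and \(w:=w\). The hypotheses \(u\leqslant su\) and \(w\leqslant sw\) are satisfied: the second is immediate from \(l(sw)>l(w)\), and the first reads \(swt\leqslant s(swt)=wt\), which holds because \(l(wt)=l(w)+1=l(swt)+1\). Next, \(swt\leqslant sw\) because \(swt=(sw)t\) with \(l(swt)=l(sw)-1\). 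The lemma then gives \(swt\leqslant w\). Since \(l(swt)=l(w)\), this forces \(swt=w\), equivalently \(sw=wt\), and therefore \(w^{-1}sw=t\in J\).

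The only mildly delicate step is the application of the lifting property in case (iii); once the correct choice \(u=swt\) is made, everything follows mechanically from the length computations. I expect no real obstacle beyond bookkeeping of the length identities.
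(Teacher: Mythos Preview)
Your argument is correct. The paper does not give its own proof of this lemma---it is simply quoted from Deodhar~\cite{deo:paraKL}---so there is nothing to compare against; your direct proof via the lifting property (Lemma~\ref{lifting1}) is a clean and standard way to establish the trichotomy.
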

Let \(K \subseteq S\). Applying the anti-automorphism of \(W\) given
by \(w \mapsto w^{-1}\) shows that \(D_K^{-1}\) is the set of minimal
representatives of the right cosets of \(W_K\) in \(W\). It is
well known that each double coset \(W_KwW_J\) contains a unique element
\(d\in D_{K,J}= D_K^{-1} \cap D_J\), and that
\(W_K\cap dW_Jd^{-1}=W_{K\cap dJd^{-1}}\) whenever \(d\in D_{K,J}\). 
It follows that each element of \(W_KdW_J\) has a factorization
\(vdu\) with \(v \in D^K_{K\cap dJd^{-1}}\) and \(u \in W_J\), and
satisfying \(l(vdu) = l(v) + l(d) + l(u)\).
Applying this to elements of \(D_J\) gives the following result.
\begin{lemma}\label{mackey}
Let \(J, K \subseteq S\). Then
\(\displaystyle D_{J} = \bigsqcup_{d \in D_{K,J}} D_{K\cap dJd^{-1}}^K d\).
\end{lemma}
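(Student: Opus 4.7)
The plan is to lift the standard double-coset decomposition $W = \bigsqcup_{d \in D_{K,J}} W_K d W_J$ to a decomposition of $D_J$ by exploiting the refined factorization of each double coset recorded in the paragraph preceding the lemma. That factorization states that each element of $W_K d W_J$ has the form $vdu$ with $v \in D^K_{K \cap dJd^{-1}}$, $u \in W_J$, and $l(vdu) = l(v) + l(d) + l(u)$; uniqueness of this expression follows from the observation that if $v_1 d u_1 = v_2 d u_2$ then $v_1^{-1} v_2 \in W_K \cap d W_J d^{-1} = W_{K \cap dJd^{-1}}$, which forces $v_1 = v_2$ since distinct elements of $D^K_{K \cap dJd^{-1}}$ represent distinct left cosets of $W_{K \cap dJd^{-1}}$ in $W_K$.

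Fixing $d \in D_{K,J}$ and $v \in D^K_{K \cap dJd^{-1}}$, I would first show $vd \in D_J$ as follows. As $u$ ranges over $W_J$, the elements $vdu$ form precisely the left coset $vd\, W_J$, and the identity $l(vdu) = l(vd) + l(u)$ identifies $vd$ as the unique minimum-length element of this coset. Hence $D^K_{K \cap dJd^{-1}}\, d \subseteq D_J$.

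Conversely, given $w \in D_J$, let $d$ be the unique element of $D_{K,J}$ with $w \in W_K d W_J$ and apply the refined factorization to write $w = vdu$. Then $w$ and $vd$ lie in the same left coset $vd\, W_J$, and both are of minimum length in that coset (the former because $w \in D_J$, the latter by the preceding paragraph), so $w = vd$, that is $u = 1$.

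Disjointness of the sets $D^K_{K \cap dJd^{-1}}\, d$ is immediate: if $vd = v'd'$ with $d, d' \in D_{K,J}$, then this common element lies in $W_K d W_J \cap W_K d' W_J$, forcing $d = d'$; uniqueness in the refined factorization then yields $v = v'$. There is no real obstacle to overcome, only bookkeeping: once the double-coset decomposition and the refined factorization within each double coset are in hand, the lemma reduces to reading off the minimum-length representatives of the left $W_J$-cosets obtained from the factorization.
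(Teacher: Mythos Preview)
Your proposal is correct and takes essentially the same approach as the paper, which merely says ``Applying this to elements of \(D_J\) gives the following result'' and leaves all details implicit. You have simply spelled out exactly the argument the paper gestures at: the refined factorization \(w=vdu\) with \(l(vdu)=l(v)+l(d)+l(u)\) forces \(u=1\) when \(w\in D_J\), and conversely every such \(vd\) lies in \(D_J\).
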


\begin{remark}
Each element \(w\) of \(D_{J}\) has a unique factorization \(vd\) with
\(d \in D_{K,J}\) and \(v \in D_{L}^{K}\), where \(L=K\cap dJd^{-1}\),
satisfying \(l(w) = l(v) + l(d)\).
\end{remark}
As in~\cite{howvan:wgraphDetSets}, if \(X\subseteq W\) we
define \(\Pos(X)=\{\,s\in S\mid l(xs)>l(x)\text{ for all }x\in X\,\}\).
Thus \(\Pos(X)\) is the largest subset \(J\) of \(S\) such that
\(X\subseteq D_{J}\).

Let \(\mathcal{A} = \mathbb{Z}[q, q^{-1}]\), the ring of Laurent
polynomials with integer coefficients in the indeterminate \(q\),
and let \(\mathcal{A}^{+} = \mathbb{Z}[q]\). The Hecke algebra
corresponding to the Coxeter system \((W, S)\) is the associative
\(\mathcal{A}\)-algebra \(\mathcal{H}=\mathcal{H}(W) \)
generated by elements \(\{T_{s} \mid s \in S \}\), subject to
the defining relations
\begin{align*}
T^{2}_{s}        &= 1 + (q -q^{-1})T_{s} \quad \text{for all \(s \in S\)},\\
T_{s}T_{t}T_{s}\cdots &= T_{t}T_{s}T_{t}\cdots    \quad
\text{for all \(s, t \in S\)},
\end{align*}
where in the second of these there are \(m(s,t)\) factors on each
side, \(m(s,t)\) being the order of \(st\) in~\(W\).

It is well known that \(\mathcal{H}\) is \(\mathcal{A}\)-free
with an \(\mathcal{A}\)-basis \(\{\, T_{w}\mid w \in W\,\}\) and
multiplication satisfying
\begin{equation*}
T_{s}T_{w} = \begin{cases} T_{sw} & \text{if \(l(sw) > l(w)\),}\\
                            T_{sw} + (q - q^{-1})T_{w} & \text{if \(l(sw) < l(w)\).}
              \end{cases}
\end{equation*}
for all \(s\in S\) and \(w\in W\).

Let \(a \mapsto \overline{a}\) be the involutory automorphism of
\(\mathcal{A}=\mathbb{Z}[q,q^{-1}]\) defined by \(\overline{q}=q^{-1}\).
This extends to an involutory automorphism of \(\mathcal{H}\) satisfying
\begin{equation*}
\overline{T_{s}} = T_{s}^{-1} = T_{s} - (q - q^{-1}) \quad
\text{for all \(s \in S\)}.
\end{equation*}

If \(J\subseteq S\) then \(\mathcal{H}(W_{J})\), the Hecke algebra
associated with the Coxeter system \((W_{J},J)\), is isomorphic to
the subalgebra of \(\mathcal{H}(W)\) generated by
\(\{\,T_{s} \mid s \in J\,\}\). We shall identify \(\mathcal{H}(W_{J})\)
with this subalgebra.

\section{\textit{W-}graphs}
\label{sec:2}

A \(W\!\)-graph is a triple \((V, \mu, \tau)\) consisting of a set
\(V\!\), a function \(\mu\colon V \times V\to \mathbb{Z}\) and
a function \(\tau\) from \(V\) to the power set of \(S\), subject
to the requirement that the free \(\mathcal{A}\)-module with basis~\(V\)
admits an \(\mathcal{H}\)-module structure satisfying
\begin{equation}\label{wgraphdef}
     T_{s}v = \begin{cases}
              -q^{-1}v \quad &\text{if \(s \in \tau(v)\)}\\
              qv + \sum_{\{u \in V \mid s \in \tau(u)\}}\mu(u,v)u
              \quad &\text{if \(s \notin \tau(v)\)},
     \end{cases}
\end{equation}
for all \(s \in S\) and \(v \in V\!\). The elements of \(V\) are the vertices
of the graph, and if \(v\in V\) then \(\tau(v)\) is the colour of the vertex.
By definition there is a directed edge from a vertex \(v\) to a vertex \(u\) if
and only if \(\mu(u,v) \neq 0\), in which case \(\mu(u,v)\) is the weight of
the edge. We say that the edge is \textit{superfluous\/} if
\(\tau(u) \subseteq \tau(v)\) (since the formulas
in Eq.~(\ref{wgraphdef}) would be unchanged by the deletion of any such edge).

\begin{notation}
If \(\Gamma = (V, \mu, \tau)\) is a \(W\!\)-graph, we denote the
\(\mathcal{H}\)-module \(\mathcal{A}V\) by \(M_\Gamma\). When there is no
ambiguity we write \(\Gamma(V)\) for the \(W\!\)-graph whose vertex set is~\(V\).
\end{notation}
Since \(M_\Gamma\) is \(\mathcal{A}\)-free on \(V\) it admits a unique
\(\mathcal{A}\)-semilinear involution \(\alpha \mapsto \overline{\alpha}\)
such that \(\overline v=v\) for all \(v \in V\). We call this involution the
bar involution on \(M_\Gamma\). It is an easy consequence of Eq.~\eqref{wgraphdef}
that \(\overline{h\alpha}=\overline{h}\overline{\alpha}\) for all
\(h\in \mathcal{H}\) and \(\alpha \in \mathcal{A}V\!\).

Following~\cite{kazlus:coxhecke}, define a preorder \(\leqslant_{\Gamma}\) on \(V\)
as follows: \(u \leqslant_{\Gamma} v\) if there exists a sequence of vertices
\(u = x_{0},x_1, \ldots, x_{m} = v\) such that \(\mu(x_{i-1},x_{i}) \neq 0\)
and \(\tau(x_{i-1}) \nsubseteq \tau(x_{i})\) for all \(i \in [1,m]\). That
is, \(u \leqslant_{\Gamma} v\) if there is a directed path
from \(v\) to \(u\) along non-superfluous edges. Let \(\sim_{\Gamma}\)
be the equivalence relation on \(V\) corresponding to \(\leqslant_{\Gamma}\).
The \(\sim_{\Gamma}\) equivalence classes in \(V\) are called 
the \textit{cells} of \(\Gamma\). For each cell \(\mathcal{C}\) the
corresponding full subgraph of \(\Gamma\) is itself a \(W\!\)-graph,
the \(\mu\) and \(\tau\) functions being the restrictions
of those for \(\Gamma\). The preorder \(\leqslant_{\Gamma}\) on \(V\)
induces a partial order on the cells, as follows:
\(\mathcal{C} \leqslant_{\Gamma} \mathcal{C}'\) if \(u \leqslant_{\Gamma} v\) for
some \(u \in \mathcal{C}\) and \(v \in \mathcal{C}'\).

It follows readily from Eq.~(\ref{wgraphdef}) that a subset of
\(V\) spans a \(\mathcal{H}(W)\)\,-submodule of \(M_{\Gamma}\) if
and only if it is closed, in the sense that for every vertex \(v\) in
the subset, each \(u\in V\) satisfying \(\mu(u,v)\ne 0\) and
\(\tau(u)\nsubseteq\tau(v)\) is also in the subset.
Thus \(U\subseteq V\) is a closed subset of \(V\) if and
only if \(U=\bigcup_{v\in U}\{\,u\in V\mid u\leqslant_{\Gamma(V)}v\,\}\).
Clearly, a subset of \(V\) is closed if and only if it is the union
of cells that form an ideal with respect to the partial ordering
of cells. If \(U\) is a closed subset of \(V\) then the subgraphs
\(\Gamma(U)\) and \(\Gamma(V \setminus U)\) induced by \(U\) and
\(V \setminus U\) are themselves \(W\!\)-graphs, with edge weights
\(\mu(v,w)\)and vertex colours \(\tau(v)\) inherited from \(\Gamma(V)\),
and we have
\(M_{\Gamma(V \setminus U)} \cong M_{\Gamma(V)}/M_{\Gamma(U)}\)
as \(\mathcal{H}(W)\)-modules.

It is trivial to check that if \(\Gamma=(V,\mu,\tau)\) is a \(W\!\)-graph
and \(J\subseteq S\) then the \(\mathcal{H}(W_J)\)-module obtained from
\(M_\Gamma\) by restriction is afforded by a \(W_J\)-graph, namely
\(\Gamma_{J} = (V,\mu,\tau_J)\),
where \(\tau_J\) is defined by \(\tau_J(v)=\tau(v)\cap J\)
for all \(v\in V\). We remark that, by the main theorem of \cite{howyin:indwgraph},
if \(N\) is an \(\mathcal{H}(W_J)\)-module afforded by a \(W_J\)-graph
with vertex set~\(U\), then the induced module
\(\mathcal{H}\otimes_{\mathcal{H}(W_J)}N\) is afforded by a \(W\!\)-graph with
vertex set \(D_J\times U\).

We end this section by recalling the original Kazhdan--Lusztig \(W\!\)-graph
for the regular representation of \(\mathcal{H}(W)\).
For each \(w \in W\), define
\begin{align*}
\mathcal{L}(w) &= \{s \in S \mid l(sw) < l(w)\},\\
\mathcal{R}(w) & = \{s \in S \mid l(ws) < l(w)\},
\end{align*}
the elements of which are called the left descents of \(w\) and the right descents
of \(w\), respectively. Kazhdan and Lusztig give a recursive procedure that defines
polynomials \(P_{y,w}\) whenever \(y,\,w \in W\) and \(y < w\). These polynomials satisfy
\(\deg P_{y,w}\leqslant\frac12(l(w)-l(y)-1)\), and \(\mu_{y,w}\) is defined to be
the leading coefficient of \(P_{y,w}\) if the degree is \(\frac12(l(w)-l(y)-1)\),
or 0 otherwise. Now define \(W\opp\) to be the group opposite to \(W\), writing
\(w\mapsto w\opp\) for the natural antiisomorphism from \(W\) to~\(W\opp\).
Observe that \((W\times W^{\mathrm o},S\sqcup S^{\mathrm o})\) is a Coxeter system.
Kazhdan and Lusztig show that defining \(\mu\) and \(\tau\) by the formulas
\begin{align*}
\mu(y,w)    &= \begin{cases}
                    \mu_{y,w} &\quad \text{if \(y < w\)}\\
                    \mu_{w,y} &\quad \text{if \(w < y\)}
            \end{cases}\\
\tau(w) &= \mathcal{L}(w)\sqcup\mathcal{R}(w)^{\mathrm o}
\end{align*}
makes \(\Gamma(W) = (W,\mu,\tau)\) into a \((W\times W^{\mathrm o})\)-graph.
Thus the module \(M_{\Gamma(W)}\) may be regarded as an
\((\mathcal{H},\mathcal{H})\)-bimodule.

\section{\textit{W-}graph ideals}
\label{sec:3}

Let \((W,S)\) be a Coxeter sytem and \(\mathcal{H}=\mathcal{H}(W)\). Let \(\mathscr I\)
be a nonempty ideal in the poset \((W,\leqslant\lside)\), and note that this implies that
\(\Pos(\mathscr{I})=S\setminus\mathscr{I}= \{\,s\in S\mid s\notin\mathscr{I}\,\}\).
Let \(J\) be a subset of \(\Pos(\mathscr{I})\),  so that \(\mathscr I\subseteq D_J\).
For each \(w \in \mathscr I\) the following subsets of~\(S\) give a partition of \(S\):
\begin{align*}
\SD(\mathscr I\!,\,w)&=\{\,s\in S\mid sw<w\,\},\\
\SA(\mathscr I\!,\,w)&=\{\,s\in S\mid sw>w\text{ and }sw\in\mathscr I\,\},\\
\WD_{J}(\mathscr I\!,\,w)&=\{\,s\in S\mid sw>w\text{ and }sw\notin
D_{J}\,\},\\
\WA_{J}(\mathscr I\!,\,w)&=\{\,s\in S\mid sw> w\text{ and }
sw\in D_{J}\setminus\mathscr I\,\}.
\end{align*}
We call the elements of these sets the
strong ascents, strong descents, weak ascents and weak descents
of~\(w\) relative to \(\mathscr{I}\) and~\(J\). If  \(\mathscr{I}\) and~\(J\)
are clear from the context then we may omit reference to them, and write,
for example, \(\WA(w)\) rather than \(\WA_{J}(\mathscr I\!,\,w)\).
We also define \(\D_{J}(\mathscr I\!,\,w) = \SD(\mathscr I\!,\,w)
\cup \WD_{J}(\mathscr I\!,\,w)\) and \(\A_{J}(\mathscr I\!,\,w) =
\SA(\mathscr I\!,\,w) \cup \WA_{J}(\mathscr I\!,\,w)\), the descents
and ascents of~\(w\) relative to \(\mathscr{I}\) and~\(J\).

\begin{remark}
It follows from Lemma \ref{deo1} that
\begin{align*}
\WA(w) &= \{\,s \in S \mid sw \notin \mathscr I \text{ and } w^{-1}sw \notin J\,\},\\
\WD(w) &= \{\,s \in S \mid sw \notin \mathscr I \text{ and } w^{-1}sw \in J\,\},
\end{align*}
since \(sw \notin \mathscr I\) implies that \(sw>w\), given that
\(\mathscr I\) is an ideal in \((W,\leqslant\lside)\). Clearly all descents of the identity
element are weak descents, and in fact \(\D(1)=\WD(1)=J\).
\end{remark}

\begin{definition}\label{wgphdetelt}
With the above notation, we say that \(\mathscr{I}\) is a
\textit{\(W\!\)-graph ideal with respect to~\(J\)}, or that \((\mathscr{I}\!,\,J)\)
is a \(W\!\)-graph ideal, if the following hypotheses are satisfied.
\begin{itemize}[topsep=1 pt]
\item[\textup{(i)}] There is an \(\mathcal{A}\)-free
\(\mathcal{H}\)-module \(\mathscr{S}=\mathscr{S}(\mathscr{I}\!,\,J)\)
with an \(\mathcal{A}\)-basis \(B=\{\,b_{w}\mid w\in\mathscr I\,\}\) on which the
generators \(T_{s}\) act by \begin{equation}\label{S_0action}
T_{s}b_{w} =
\begin{cases}
  b_{sw}  & \text{if \(s \in \SA(w)\),}\\
  b_{sw} + (q - q^{-1})b_{w} & \text{if \(s \in \SD(w)\),}\\
  -q^{-1}b_{w} & \text{if \(s \in \WD(w)\),}\\
  qb_{w} - \sum\limits_{\substack{y \in \mathscr I\\y < sw}} r^s_{y,w}b_{y} &
  \text{if \(s \in \WA(w)\),}
\end{cases}
\end{equation}
for some polynomials \(r^s_{y,w} \in q\mathcal{A}^{+}\!\).
\item[\textup{(ii)}] The module \(\mathscr{S}\) admits an
\(\mathcal{A}\)-semilinear involution \(\alpha \mapsto
\overline{\alpha}\) satisfying \(\overline{b_1}=b_1\) and
\(\overline{h\alpha} =\overline{h}\overline{\alpha}\) for all
\(h\in \mathcal{H}\) and \(\alpha \in \mathscr{S}\).
\end{itemize}
The basis \(B\) in (i) is called the \textit{standard basis\/} of \(\mathscr{S}\),
and the involution \(\alpha \mapsto \overline{\alpha}\) in (ii) is called the
\textit{bar involution\/} on~\(\mathscr{S}\).
\end{definition}

\begin{remark}\label{Rmk:T_wb_1}
An obvious induction on \(l(w)\) shows that \(b_w=T_wb_1\) for all
\(w\in\mathscr I\).\end{remark}

\begin{remark}\label{Rmk:r^s_{w,w}}
In view of the relation \(T_s(T_s-q)=-q^{-1}(T_s-q)\), it follows from Eq.~\eqref{S_0action}
that \(\{\,b_w\mid s\in\WD(w)\,\}\cup\{\,b_{sw}-qb_w\mid s\in\SA(w)\,\}\) spans
the \((-q^{-1})\)-eigenspace of \(T_s\) in~\(\mathscr{S}\!\).
In the case \(s\in\WA(w)\) we deduce that \(r^s_{y,w}=qr^s_{sy,w}\) whenever
\(s\in\SA(y)\), and that \(r^s_{y,w}=0\) whenever \(s\in\WA(y)\).
In particular, \(r^s_{w,w}=0\).
\end{remark}

\begin{definition}
If \(w\in W\) and \(\mathscr{I}=\{\,u\in W\mid u\leqslant\lside w\,\}\) is a
\(W\!\)-graph ideal with respect to some \(J\subseteq S\) then we
say that \(w\) is a \textit{\(W\!\)-graph determining element} associated
with \(J\).
\end{definition}

\begin{remark}
If \(\mathscr{I}\) is a \(W\!\)-graph ideal generated by a \(W\!\)-graph
determining element then it follows from
\cite[Proposition 7.9]{howvan:wgraphDetSets} that, in the case
\(s \in \WA_{J}(\mathscr{I})\) in Part~(i) of Definition~\ref{wgdetset},
the sum \(\sum_{y\in\mathscr{I}\!,\,y<sw}r^{s}_{y,w}b_{y}\) can be replaced by
the simpler \(\sum_{y\in\mathscr{I}\!,\,y<w}r^{s}_{y,w}b_{y}\).
\end{remark}

Let \((\mathscr I,J)\) be a \(W\!\)-graph ideal and let
\(\mathscr{S}(\mathscr{I}\!,\,J)\) be the corresponding \(\mathcal{H}\)-module,
as given in Definition~\ref{wgphdetelt}. From these data one can construct a
\(W\!\)-graph \(\Gamma=\Gamma(\mathscr I,J)\) with
\(M_\Gamma=\mathscr{S}(\mathscr{I}\!,\,J)\). Specifically, the
following results are proved in~\cite{howvan:wgraphDetSets}.

\begin{lemma}\textup{\cite[Lemma 7.2.]{howvan:wgraphDetSets}}\label{uniCbasis1}
\ The module \(\mathscr{S}(\mathscr{I}\!,\,J)\) in
Definition~\ref{wgphdetelt} has a unique \(\mathcal{A}\)-basis 
\(C = \{\,c_{w} \mid w\in \mathscr I\,\}\) such that for all 
\(w \in\mathscr I\) we have \(\overline{c_{w}} = c_{w}\) and
\begin{equation}\label{qpoly1}
b_{w} = c_{w} + q\sum_{y < w} q_{y,w}c_{y}
\end{equation}
for certain polynomials \(q_{y,w} \in \mathcal{A}^{+}\).
\end{lemma}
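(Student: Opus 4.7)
The plan is to construct the $c_w$ by induction on $l(w)$, following the classical Kazhdan--Lusztig argument adapted to this setting. The base case $w=1$ forces $c_1=b_1$, which is bar-invariant by Definition~\ref{wgphdetelt}(ii); the sum in Eq.~\eqref{qpoly1} is empty.

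For the inductive step, suppose $c_y$ has been constructed for all $y\in\mathscr{I}$ with $l(y)<l(w)$. The first and key step is to establish a triangular expansion
\begin{equation*}
\overline{b_w}=b_w+\sum_{\substack{y\in\mathscr{I}\\y<w}}\gamma_{y,w}\,b_y
\end{equation*}
in the standard basis, for some $\gamma_{y,w}\in\mathcal{A}$. To prove this, use $b_w=T_wb_1$ (Remark~\ref{Rmk:T_wb_1}) together with $\overline{b_1}=b_1$ to write $\overline{b_w}=\overline{T_w}\,b_1$, expand $\overline{T_w}=T_w+\sum_{y<w}\overline{R_{y,w}}\,T_y$ in the Hecke algebra using the classical Kazhdan--Lusztig $R$-polynomials (noting $R_{w,w}=1$), and reduce each $T_yb_1$ to a combination of standard basis elements $b_z$ with $z\in\mathscr{I}$ and $z\leqslant y$ by iterated application of Eq.~\eqref{S_0action}.

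Converting to the $c$-basis via the inductive hypothesis then gives $\overline{b_w}-b_w=\sum_{y<w}a_{y,w}c_y$ for some $a_{y,w}\in\mathcal{A}$. Applying the bar involution and using $\overline{c_y}=c_y$ yields $\overline{a_{y,w}}=-a_{y,w}$. Standard properties of the bar involution on $\mathcal{A}=\mathbb{Z}[q,q^{-1}]$---in particular that each anti-invariant element is uniquely of the form $p-\overline{p}$ with $p\in q\mathcal{A}^+$---allow us to write $a_{y,w}=qq_{y,w}-\overline{qq_{y,w}}$ uniquely with $q_{y,w}\in\mathcal{A}^+$. Setting
\begin{equation*}
c_w=b_w-q\sum_{y<w}q_{y,w}\,c_y
\end{equation*}
produces a bar-invariant element satisfying Eq.~\eqref{qpoly1}. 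Uniqueness is standard: if $c'_w$ is another candidate, then $c_w-c'_w=q\sum_{y<w}e_yc_y$ is bar-invariant with $e_y\in\mathcal{A}^+$, and $\mathcal{A}$-linear independence of the $c_y$ forces $q^{2}e_y=\overline{e_y}$ for each $y$, which is impossible unless $e_y=0$.

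The main obstacle is the first step---the triangular expansion of $\overline{b_w}$ with leading coefficient exactly $1$ at $b_w$. The four-case action of Eq.~\eqref{S_0action} must be tracked carefully when reducing $T_yb_1$ (for $y\leqslant w$, possibly with $y\notin\mathscr{I}$) to standard basis elements: the weak-descent case replaces $T_sb_v$ by $-q^{-1}b_v$, while the weak-ascent case introduces the auxiliary polynomials $r^s_{v,v'}\in q\mathcal{A}^+$. One must verify that after all these reductions only $b_z$ with $z\in\mathscr{I}$ and $z\leqslant w$ appear, and that the coefficient of $b_w$ itself is exactly $1$. Once this triangularity is in hand, the remainder of the argument is formal.
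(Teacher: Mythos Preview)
The paper does not actually prove this lemma; it is quoted verbatim from \cite[Lemma~7.2]{howvan:wgraphDetSets} with no argument supplied, so there is no in-paper proof to compare against. Your proposal follows the standard Kazhdan--Lusztig construction and is essentially correct.

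The triangularity step you flag as the main obstacle does go through. One proves by induction on $l(y)$ that $T_yb_1\in\Span_{\mathcal A}\{\,b_z\mid z\in\mathscr{I},\ z\leqslant y\,\}$ for \emph{every} $y\in W$: writing $y=sv$ with $l(v)=l(y)-1$ and applying Eq.~\eqref{S_0action} to each $T_sb_z$ arising from the inductive expansion of $T_vb_1$, the four cases are controlled by Lemma~\ref{lifting1} (for $\SA$ and $\WA$ one uses $z\leqslant v<sv$ and $sz>z$ to get $sz\leqslant sv=y$; the $\SD$ and $\WD$ cases are immediate). Since $T_wb_1=b_w$ and the $R$-polynomial expansion of $\overline{T_w}$ has leading term~$T_w$, the coefficient of $b_w$ in $\overline{b_w}$ is exactly~$1$, as you need.

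One small slip: with your convention $\overline{b_w}-b_w=\sum_{y<w}a_{y,w}c_y$, bar-invariance of $c_w=b_w-q\sum q_{y,w}c_y$ requires
\[
a_{y,w}=\overline{qq_{y,w}}-qq_{y,w},
\]
not $qq_{y,w}-\overline{qq_{y,w}}$ as you wrote. (With your sign, a direct check shows $\overline{c_w}=c_w$ only when every $q_{y,w}$ vanishes.) The fix is to decompose each anti-invariant $a\in\mathcal A$ uniquely as $a=\overline{p}-p$ with $p\in q\mathcal A^{+}$; the rest of your argument is unaffected.
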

Define \(\mu_{y,w}\) to be the constant term of~\(q_{y,w}\). The
polynomials \(q_{y,w}\), where \(y < w\), can be computed
recursively by the following formulas.

\begin{corollary}\textup{\cite[Corollary 7.4]{howvan:wgraphDetSets}}\label{recursion}
\ Suppose that \(w<sw\in \mathscr I\) and \(y<sw\). If \(y=w\) then
\(q_{y,sw}=1\), and if \(y\ne w\) we have the following formulas:
\begin{itemize}[topsep=1 pt]
\item[\textup{(i)}] \(q_{y,sw}=qq_{y,w}\) \ if \(s\in\A(y)\),
\item[\textup{(ii)}]
\(q_{y,sw} = -q^{-1}(q_{y,w}-\mu_{y,w})+q_{sy,w}
+\sum_x\mu_{y,x}q_{x,w}\) \ if \(s\in\SD(y)\),
\item[\textup{(iii)}]
\(q_{y,sw} = -q^{-1}(q_{y,w}-\mu_{y,w})+\sum_x\mu_{y,x}q_{x,w}\) \
if \(s\in\WD(y)\),
\end{itemize}
where \(q_{y,w}\) and \(\mu_{y,w}\) are regarded as \(0\) if
\(y\not<w\), and in \textup{(ii)} and \textup{(iii)} the sums
extend over all \(x\in \mathscr I\) such that \(y<x<w\) and \(s\notin\D(x)\).
\end{corollary}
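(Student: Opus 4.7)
The plan is to derive the recursion by expanding the identity $b_{sw} = T_s b_w$ in the $C$-basis and matching coefficients of each $c_y$. Since the hypothesis $w < sw \in \mathscr{I}$ forces $s \in \SA(w)$, the first line of Eq.~\eqref{S_0action} gives $b_{sw} = T_s b_w$ directly. Substituting $b_{sw} = c_{sw} + q\sum_{y < sw} q_{y,sw} c_y$ and $b_w = c_w + q\sum_{z < w} q_{z,w} c_z$ from Lemma~\ref{uniCbasis1} converts this into an equation in the $C$-basis, reducing the task to computing $T_s c_z$ in the $C$-basis for each relevant $z$.

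The intermediate step I would prove first, by induction on $l(z)$, is the $W\!$-graph-type formula
\begin{equation*}
T_s c_z = \begin{cases}
-q^{-1} c_z & \text{if } s \in \D(z), \\
q c_z + c_{sz} + \sum_{\substack{u < z \\ s \in \D(u)}} \mu_{u,z} c_u & \text{if } s \in \SA(z), \\
q c_z + \sum_{\substack{u < z \\ s \in \D(u)}} \mu_{u,z} c_u & \text{if } s \in \WA(z),
\end{cases}
\end{equation*}
where the coefficient $1$ attached to $c_{sz}$ in the $\SA$ case reflects the fact that $\mu_{z,sz} = 1$ (itself a consequence of the $y = w$ case of the present corollary, handled first). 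The induction combines Remark~\ref{Rmk:T_wb_1} with the four clauses of Eq.~\eqref{S_0action} for $T_s b_z$, and then converts back to the $C$-basis via Lemma~\ref{uniCbasis1}. I expect the main obstacle to lie in the $\WA(z)$ case: one must show that the polynomials $r^s_{y,z} \in q\mathcal{A}^+$ appearing in Eq.~\eqref{S_0action} are exactly what is needed to cancel the unwanted $b$-basis terms, so that no $c_{sz}$-type contribution survives and only $qc_z$ plus the $\mu$-sum remain.

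Given this formula, the corollary is obtained by matching the coefficient of $c_y$ on both sides of
\begin{equation*}
c_{sw} + q\sum_{y<sw} q_{y,sw}c_y \;=\; T_s c_w + q\sum_{z<w} q_{z,w} T_s c_z.
\end{equation*}
The case $y = w$ uses only the $q c_w$ summand of $T_s c_w$, giving $q_{w,sw} = 1$. For $y \neq w$ with $s \in \A(y)$ only the $z = y$ term in the sum contributes (namely $q q_{y,w}\cdot q c_y$), producing case~(i). For $s \in \D(y)$ the right-hand side contributes $\mu_{y,w}$ from $T_s c_w$, $-q_{y,w}$ from $z=y$ (via $T_s c_y = -q^{-1} c_y$), the sum $q\sum_{x} q_{x,w}\mu_{y,x}$ over $x$ with $y < x < w$ and $s \notin \D(x)$ from the $\mu$-sums in $T_s c_x$, and, when $s \in \SD(y)$, an additional $q\cdot q_{sy,w}$ coming from the $c_y$-term in $T_s c_{sy}$ (valid because $s \in \SA(sy)$, with $sy < w$ guaranteed by Lemma~\ref{lifting1}). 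Dividing through by $q$ produces cases~(ii) and~(iii).
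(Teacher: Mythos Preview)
The paper does not prove this statement; it is quoted verbatim from \cite[Corollary 7.4]{howvan:wgraphDetSets}. In that source the numbering makes the intended logic clear: Theorem~7.3 (the $T_s c_z$ formula, quoted here as Theorem~\ref{main1}) is established first, and Corollary~7.4 is then read off by expanding $b_{sw}=T_s b_w$ in the $C$-basis and matching coefficients of~$c_y$. Your final paragraph carries out exactly this coefficient comparison, and it is correct; in particular your treatment of the $z=sy$ contribution in the $\SD(y)$ case and its absence in the $\WD(y)$ case is right, and the convention $q_{sy,w}=0$ when $sy\not<w$ covers the boundary situation.

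Where your plan departs from the source is in the intermediate step: you propose to prove the $T_s c_z$ formula yourself by induction on~$l(z)$, rewriting $T_s b_z$ from Eq.~\eqref{S_0action} in the $C$-basis. That formula is already available here as Theorem~\ref{main1}, so you may simply cite it. If you do want to prove it independently, your instinct that the $\WA(z)$ case is the obstacle is correct, but the remedy is not to chase the unknown polynomials $r^s_{y,z}$. The clean argument uses bar-invariance: since $\overline{T_s-q}=T_s-q$ and $\overline{c_z}=c_z$, the element $(T_s-q)c_z$ is bar-invariant, and one then argues (via Lemma~\ref{uniCbasis1} and a length induction) that its expansion in the $C$-basis has integer coefficients supported on $\{c_y\mid s\in\D(y),\,y\le sw\}$. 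This is how the $\WA$ case is actually handled in~\cite{howvan:wgraphDetSets}, and it avoids any need to identify the $r^s_{y,z}$ explicitly.
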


\begin{corollary}\label{polringqyw}
Suppose that \(y,w \in \mathscr{I}\) with \(y < w\). If \(l(w) - l(y)\) is odd then
\(q_{y,w}\) is a polynomial in \(q^{2}\), while if \(l(w) - l(y)\) is even then
\(\mu_{y,w} = 0\) and \(q^{-1}q_{y,w}\) is a polynomial in~\(q^{2}\).
\end{corollary}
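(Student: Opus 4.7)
The plan is to proceed by induction on $l(w)$, applying the recursive formulas in Corollary~\ref{recursion}. The base case is $w = s \in \SA(1)$ with $y = 1$, where $q_{1,s}=1$; this matches the claim, since $l(w)-l(y)=1$ is odd and $1 \in \mathbb{Z}[q^2]$. For the inductive step, I would fix $w,\,sw \in \mathscr{I}$ with $w < sw$ and $y < sw$, assuming the corollary holds for all pairs $(y',w')$ with $l(w')\leqslant l(w)$, and check each of the three cases of Corollary~\ref{recursion} in turn.

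Case~(i) gives $q_{y,sw} = q\, q_{y,w}$, where multiplication by $q$ interchanges $\mathbb{Z}[q^2]$ and $q\mathbb{Z}[q^2]$; this is consistent with the fact that $l(sw)-l(y)$ differs from $l(w)-l(y)$ by~$1$. Cases~(ii) and~(iii) involve the term $-q^{-1}(q_{y,w}-\mu_{y,w})$, together with $q_{sy,w}$ (in~(ii) only) and a sum $\sum_x \mu_{y,x}\,q_{x,w}$ over $x$ with $y < x < w$ and $s \in \A(x)$. The key observation is that $q_{y,w}-\mu_{y,w}$ is always divisible by $q$: if $l(w)-l(y)$ is even then the inductive hypothesis forces $\mu_{y,w}=0$ and $q_{y,w}\in q\mathbb{Z}[q^2]$, whereas if $l(w)-l(y)$ is odd then $q_{y,w}\in\mathbb{Z}[q^2]$ and subtracting its constant term $\mu_{y,w}$ leaves an element of $q^2\mathbb{Z}[q^2]$. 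In both cases, $-q^{-1}(q_{y,w}-\mu_{y,w})$ sits in the component predicted for $q_{y,sw}$, namely $\mathbb{Z}[q^2]$ if $l(sw)-l(y)$ is odd and $q\mathbb{Z}[q^2]$ if $l(sw)-l(y)$ is even. The term $q_{sy,w}$ is handled analogously, noting that $l(w)-l(sy)$ has the opposite parity of $l(w)-l(y)$ since $s\in\SD(y)$. For the $x$-sum one uses the inductive vanishing $\mu_{y,x}=0$ whenever $l(x)-l(y)$ is even, so only terms with $l(x)-l(y)$ odd contribute, pinning down the parity of $l(w)-l(x)$ and hence the component containing $q_{x,w}$.

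The whole argument is pure parity-tracking through the recursion, and I expect no serious obstacle: the one mildly delicate point is the treatment of $-q^{-1}(q_{y,w}-\mu_{y,w})$ in cases~(ii) and~(iii), which works precisely because subtracting $\mu_{y,w}$ removes exactly the obstruction to divisibility of $q_{y,w}$ by~$q$ in the odd-parity case.
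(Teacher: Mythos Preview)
Your proposal is correct and follows essentially the same approach as the paper, which simply says the result follows from Corollary~\ref{recursion} by a straightforward induction on \(l(w)-l(y)\). Your choice to induct on \(l(w)\) rather than on \(l(w)-l(y)\) is a harmless variation, and the parity-tracking you describe is exactly what ``straightforward'' means here; the one small point you might add for completeness is that in case~(ii) the terms \(q_{y,w}\) and \(\mu_{y,w}\) are declared to be zero when \(y\not<w\), which is automatically consistent with either parity.
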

\begin{proof}
This follows from Corollary~\ref{recursion} by a straightforward induction
on \(l(w) - l(y)\).\qed
\end{proof}
Let \(\mu\colon C \times C \rightarrow \mathbb{Z}\) be given by
\begin{equation}\label{mu-symmetric}
\mu(c_{y},c_{w}) =
                     \begin{cases}
                         \mu_{y,w} &\text{if \(y < w\)}\\
                         \mu_{w,y} &\text{if \(w < y\)}\\
                         0 &\text{otherwise},
                     \end{cases}
\end{equation}
and let \(\tau\) from \(C\) to the power set of~\(S\) be given by
\(\tau(c_{w}) = \D(w)\) for all~\(y\in\mathscr{I}\!\).

\begin{theorem}\textup{\cite[Theorem 7.5.]{howvan:wgraphDetSets}}\label{main-wg}
\ The triple \((C, \mu, \tau)\) is a \(W\!\)-graph.
\end{theorem}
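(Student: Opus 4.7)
To show that $(C,\mu,\tau)$ is a $W$-graph it suffices to verify that for every $s\in S$ and every $w\in\mathscr{I}$ the action of $T_s$ on $c_w$ satisfies
\[
T_s c_w=\begin{cases} -q^{-1}c_w & \text{if } s\in\D(w),\\
qc_w+\displaystyle\sum_{\substack{y\in\mathscr{I}\\ s\in\D(y)}}\mu(c_y,c_w)\,c_y & \text{if } s\in\A(w).\end{cases}
\]
The plan is to proceed by induction on $l(w)$, computing $T_s c_w$ from $T_s b_w$ (given by Eq.~(\ref{S_0action})) together with the expansion $b_w=c_w+q\sum_{y<w}q_{y,w}c_y$ of Lemma~\ref{uniCbasis1}, and applying the inductive hypothesis to each $T_s c_y$.

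The argument splits into four cases corresponding to the partition $S=\SA(w)\sqcup\SD(w)\sqcup\WA(w)\sqcup\WD(w)$. The pivotal case is $s\in\SA(w)$: here $T_s b_w=b_{sw}$, and expanding both sides in the $C$-basis gives
\[
T_s c_w=c_{sw}+q\sum_{y<sw}q_{y,sw}\,c_y-q\sum_{y<w}q_{y,w}\,T_s c_y.
\]
Substituting the inductive expressions for $T_s c_y$ according to whether $s\in\D(y)$ or $s\in\A(y)$, and then invoking the recursion of Corollary~\ref{recursion} to rewrite the coefficients $q_{y,sw}$ in terms of $q_{y,w}$ and the $\mu$-terms, most summands cancel and the remainder is exactly $qc_w+c_{sw}+\sum_{y<w,\,s\in\D(y)}\mu_{y,w}c_y$; this matches the required formula since $\mu(c_{sw},c_w)=\mu_{w,sw}=1$ by the $y=w$ clause of Corollary~\ref{recursion}. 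The $\SD$ case is then obtained by applying the $\SA$ analysis to $sw$ in place of~$w$ and using the quadratic relation $T_s(T_s-q)=-q^{-1}(T_s-q)$ to isolate $T_s c_w=-q^{-1}c_w$.

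The weak cases proceed analogously, starting from $T_s b_w=-q^{-1}b_w$ when $s\in\WD(w)$ and from $T_s b_w=qb_w-\sum_{y<sw}r^s_{y,w}b_y$ when $s\in\WA(w)$. Parts (iii) and (i)--(iii) of Corollary~\ref{recursion}, respectively, provide the telescoping needed to turn these into the required $C$-basis expressions. In the $\WA$ case the cancellations work out only because of the identities in Remark~\ref{Rmk:r^s_{w,w}}: $r^s_{y,w}=qr^s_{sy,w}$ when $s\in\SA(y)$, and $r^s_{y,w}=0$ when $s\in\WA(y)$. Together with $r^s_{w,w}=0$, these are precisely what makes every $y$ with $s\in\A(y)$ fall out of the final sum.

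The main technical obstacle is the bookkeeping in the $\WA$ case: verifying that all contributions indexed by $y$ with $s\in\A(y)$ collapse to produce the single leading $qc_w$, while the remaining coefficients are exactly the $\mu_{y,w}$ prescribed by formula~(\ref{mu-symmetric}). Here Corollary~\ref{polringqyw} is indispensable, as the parity constraints it provides ensure both that the constant terms $\mu_{y,w}$ are well-defined integers and that no spurious $q$-powers survive in the coefficients of the $c_y$; these, together with the case analysis above, complete the verification of Eq.~(\ref{wgraphdef}) and hence of the $W$-graph property.
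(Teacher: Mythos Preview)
The theorem is cited from \cite{howvan:wgraphDetSets} without proof in this paper, so there is no in-paper argument to compare against directly. Note, however, that Theorem~\ref{main1} (also cited) already records the explicit action of $T_s$ on each $c_w$; given that, the content of Theorem~\ref{main-wg} is just the verification that those formulas match the $W$-graph template of Eq.~\eqref{wgraphdef} with $\mu$ as in Eq.~\eqref{mu-symmetric}.

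Your inductive computation essentially re-derives Theorem~\ref{main1}, and the use of Corollary~\ref{recursion} is permissible since it is stated earlier. But there is a genuine gap at the last step. You arrive at
\[
T_s c_w \;=\; qc_w + c_{sw} + \sum_{\substack{y<w\\ s\in\D(y)}} \mu_{y,w}\, c_y
\]
and declare that this ``matches the required formula''. It does not, yet: the $W$-graph formula~\eqref{wgraphdef} sums over \emph{all} $u\in C$ with $s\in\tau(u)$, and by Eq.~\eqref{mu-symmetric} the coefficient of $c_y$ for $y>w$ is $\mu_{w,y}$, which is not a~priori zero. You must still prove that $\mu_{w,y}=0$ whenever $w<y$, $s\in\D(y)$, $s\in\A(w)$ and $y\neq sw$. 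For $l(y)>l(w)+1$ this is exactly Corollary~\ref{ywmorethan3}; for $l(y)=l(w)+1$ one uses Corollary~\ref{recursion}(i) together with Lemma~\ref{lifting1} to force $y=sw$ when $s\in\SD(y)$, and Remark~\ref{basiselminq-1es} when $s\in\WD(y)$. Without this step the verification of Eq.~\eqref{wgraphdef} is incomplete in every ascent case.

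Separately, your claim that Corollary~\ref{polringqyw} is ``indispensable'' is unfounded. The constant term $\mu_{y,w}$ is an integer by definition, and the cancellations in each case are straight coefficient comparisons driven by Corollary~\ref{recursion}; parity contributes nothing. Invoking it here reads as a placeholder for a computation---particularly in the $\WA$ case---that you have not actually carried out.
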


\begin{definition}\label{wgbasis}
We call \(C = \{\,c_{w} \mid w \in \mathscr{I}\,\}\) the
\textit{\(W\!\)-graph basis\/} of \(\mathscr{S}(\mathscr{I}\!,\,J)\).
\end{definition}
The generators \(T_{s}\) act on the basis elements \(c_{w}\) as
described in the following theorem.

\begin{theorem}\textup{\cite[Theorem 7.3.]{howvan:wgraphDetSets}}\label{main1}
\ Let \(s\in S\) and \(w \in \mathscr I\). Then
\[
T_sc_{w}=
\begin{cases}
-q^{-1}c_{w}&\text{if \(s\in\D(w)\),}\\
qc_{w}+\sum_{y\in \mathcal R(s,w)}\mu_{y,w}c_{y}
&\text{if \(s\in\WA(w)\),}\\
qc_{w}+c_{sw}+\sum_{y\in \mathcal R(s,w)}\mu_{y,w}c_{y} &\text{if
\(s\in\SA(w)\),}
\end{cases}
\]
where the set \(\mathcal R(s,w)\) consists of all \(y\in\mathscr{I}\) such that
\(y < w\) and \(s\in\D(y)\).
\end{theorem}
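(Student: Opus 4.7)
The plan is to proceed by induction on $l(w)$, combining three ingredients: the change-of-basis relation from Lemma~\ref{uniCbasis1}, the recursion for the polynomials $q_{y,w}$ from Corollary~\ref{recursion}, and the action of $T_s$ on the standard basis given by Eq.~\eqref{S_0action}. The base case $w=1$ is direct: $c_1=b_1$, $\SD(1)=\emptyset$, and for each of $s\in\WD(1)=J$, $s\in\SA(1)$, and $s\in\WA(1)$ the formula for $T_s b_1$ in Eq.~\eqref{S_0action} matches the claim, after noting that $\mathcal{R}(s,1)=\emptyset$ and, in the $\SA$ subcase, that $q_{1,s}=1$ by Corollary~\ref{recursion}.

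The heart of the inductive step is the case $s\in\SA(w)$. Here $sw\in\mathscr{I}$ and $T_sb_w=b_{sw}$, so expanding both $b_w$ and $b_{sw}$ in the $c$-basis via Lemma~\ref{uniCbasis1} yields
\[
T_sc_w \;=\; c_{sw} \;+\; q\sum_{z<sw} q_{z,sw}\,c_z \;-\; q\sum_{y<w} q_{y,w}\,(T_sc_y).
\]
Applying the inductive hypothesis to each $T_sc_y$ and regrouping according to whether $s\in\A(y)$, $s\in\SD(y)$, or $s\in\WD(y)$, I would then match, term by term, the three cases of Corollary~\ref{recursion} with the three bookkeeping types produced by the inductive $T_sc_y$. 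The sum telescopes to leave precisely $qc_w+c_{sw}+\sum_{y\in\mathcal R(s,w)}\mu_{y,w}c_y$. The case $s\in\SD(w)$ is then immediate: since $s\in\SA(sw)$, applying the just-established formula to $T_sc_{sw}$, then applying $T_s$ again and invoking the quadratic relation $T_s^2=1+(q-q^{-1})T_s$ together with the inductive hypothesis on each $y\in\mathcal R(s,sw)$ (for which $s\in\D(y)$, so $T_sc_y=-q^{-1}c_y$), lets one solve algebraically for $T_sc_w$ and obtain $-q^{-1}c_w$.

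The main obstacle is the weak case $s\in\WA(w)\cup\WD(w)$, where the recursion of Corollary~\ref{recursion} is unavailable because $sw\notin\mathscr{I}$. My plan here is to work directly in the $b$-basis: by Eq.~\eqref{S_0action}, $T_sb_w=-q^{-1}b_w$ when $s\in\WD(w)$ and $T_sb_w=qb_w-\sum_{y<w}r^s_{y,w}b_y$ when $s\in\WA(w)$, with $r^s_{w,w}=0$ by Remark~\ref{Rmk:r^s_{w,w}}. Inverting Eq.~\eqref{qpoly1} to express $c_w$ in the $b$-basis, applying $T_s$ termwise, and then converting back using the inductive hypothesis for $T_sc_y$ with $y<w$, one gets an expansion of $T_sc_w$ in the $c$-basis. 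The hardest step is verifying that all contributions from the $r^s_{y,w}$ and from the ``lower'' terms produced by the inductive $T_sc_y$ cancel exactly, leaving only the stated expression. This cancellation is forced by the uniqueness of the bar-invariant basis $C$ (Lemma~\ref{uniCbasis1}) together with the integrality of the $\mu_{y,w}$ and the parity constraints of Corollary~\ref{polringqyw}: the left-hand side $T_sc_w$ must have a specific form under the bar involution, and this pins down the remaining coefficients uniquely.
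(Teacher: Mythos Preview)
The paper does not contain its own proof of this statement: Theorem~\ref{main1} is quoted verbatim from \cite[Theorem~7.3]{howvan:wgraphDetSets} and used as input. So there is no in-paper proof to compare against; I will assess your argument on its own merits.

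Your treatment of the cases \(s\in\SA(w)\) and \(s\in\SD(w)\) is essentially correct. The \(\SA\) computation is exactly the calculation carried out (in the reverse direction) in the paper's Lemma~\ref{uniBbasis1}: expanding \(T_sb_w=b_{sw}\) in the \(c\)-basis and matching coefficients against the three cases of Corollary~\ref{recursion} does telescope to \(qc_w+c_{sw}+\sum_{y\in\mathcal{R}(s,w)}\mu_{y,w}c_y\). For \(s\in\SD(w)\) you do not need the ``just-established'' \(\SA\) formula for \(c_{sw}\); since \(l(sw)<l(w)\), the inductive hypothesis already gives \(T_sc_{sw}\), and the quadratic relation then yields \(T_sc_w=-q^{-1}c_w\) by the short calculation you outline.

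The weak cases, however, are not yet a proof. Your appeal to ``the uniqueness of the bar-invariant basis'' and ``the parity constraints of Corollary~\ref{polringqyw}'' does not pin down the mechanism, and parity is in fact irrelevant. The argument that works is this. The element \((T_s-q)c_w\) is bar-invariant, since \(\overline{T_s-q}=T_s-q\) and \(\overline{c_w}=c_w\). Writing \(c_w=b_w-q\sum_{y<w}q_{y,w}c_y\), applying \((T_s-q)\) to \(b_w\) via Eq.~\eqref{S_0action}, and applying the inductive hypothesis to each \((T_s-q)c_y\), one obtains \((T_s-q)c_w=\sum_z\beta_zc_z\) with every coefficient \(\beta_z\in\mathcal{A}^+\). (This uses that \(r^s_{y,w}\in q\mathcal{A}^+\), that \(q_{y,w}\in\mathcal{A}^+\), and that the inductive formulas produce only integer combinations of \(c\)-elements after subtracting \(qc_y\).) Bar-invariance then forces each \(\beta_z\) to be an integer, namely its own constant term; a direct inspection shows this constant term is \(\mu_{z,w}\) when \(z\in\mathcal{R}(s,w)\) and \(0\) otherwise. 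This handles \(s\in\WA(w)\). For \(s\in\WD(w)\) the same scheme applies to \((T_s+q^{-1})c_w\), which is also bar-invariant; here \((T_s+q^{-1})b_w=0\), and one finds that for \(z\) with \(s\in\A(z)\) the coefficient \(-(q^2+1)q_{z,w}\) lies in \(\mathcal{A}^+\) and is bar-invariant only if \(q_{z,w}=0\), recovering Remark~\ref{basiselminq-1es} and giving \((T_s+q^{-1})c_w=0\).

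In short: your outline for the strong cases is sound, but for the weak cases you need to replace the vague invocation of uniqueness and parity by the concrete observation that all coefficients in the \(c\)-expansion of \((T_s-q)c_w\) (respectively \((T_s+q^{-1})c_w\)) lie in \(\mathcal{A}^+\), whence bar-invariance forces them to be constants.
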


\begin{corollary}\textup{\cite[Corollary 3.6.(i)]{nguyen:wgideals2}}\label{leftorderpreorder}
\ Let \(x,y \in \mathscr{I}\). If \(x \leqslant\lside y\) then \(c_{y} \leqslant_{\Gamma(C)} c_{x}\).
\end{corollary}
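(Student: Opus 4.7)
The plan is to induct on $l(y)-l(x)$. The base case $l(y)=l(x)$ forces $x=y$ (immediate from the definition of $\leqslant\lside$), and then the empty chain witnesses $c_y\leqslant_{\Gamma(C)}c_x$.

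For the inductive step, assume $x<\lside y$. I would first reduce by one covering relation in the left weak order: choose any reduced factorisation $y=s_1s_2\cdots s_kx$ realising $x\leqslant\lside y$, and set $s=s_1$ and $y'=s_2\cdots s_kx$, so that $y=sy'$ with $l(y)=l(y')+1$ and $x\leqslant\lside y'$. Because $\mathscr{I}$ is an ideal of $(W,\leqslant\lside)$ and $y'\leqslant\lside y\in\mathscr{I}$, we have $y'\in\mathscr{I}$. The inductive hypothesis applied to $x$ and $y'$ then yields $c_{y'}\leqslant_{\Gamma(C)}c_x$, so by transitivity of $\leqslant_{\Gamma(C)}$ it suffices to exhibit a single non-superfluous edge from $c_{y'}$ to $c_y$; in terms of the defining sequence this amounts to verifying that $\mu(c_y,c_{y'})\ne 0$ and $\tau(c_y)\nsubseteq\tau(c_{y'})$.

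Both conditions will fall out directly from Theorem~\ref{main1}. Since $sy'=y\in\mathscr{I}$ and $l(y)>l(y')$ we have $s\in\SA(y')$, so
\begin{equation*}
T_sc_{y'}=qc_{y'}+c_y+\sum_{z\in\mathcal{R}(s,y')}\mu_{z,y'}c_z.
\end{equation*}
Comparing with the $W$-graph action~\eqref{wgraphdef}, namely $T_sc_{y'}=qc_{y'}+\sum_{\{u\mid s\in\tau(u)\}}\mu(u,c_{y'})u$, and noting that $s\in\SD(y)\subseteq\D(y)=\tau(c_y)$, one reads off $\mu(c_y,c_{y'})=1\ne 0$. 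The same observation shows $s\in\tau(c_y)$, whereas $s\in\SA(y')$ forces $s\notin\D(y')=\tau(c_{y'})$, so $\tau(c_y)\nsubseteq\tau(c_{y'})$ as required.

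The argument is essentially routine; the only point that repays attention is checking that the one-step covering move $y'\to y=sy'$ in $(W,\leqslant\lside)$ really does produce a single non-superfluous edge of $\Gamma(C)$, and it is precisely the case analysis in Theorem~\ref{main1} that supplies simultaneously the nonzero edge weight and the colour asymmetry.
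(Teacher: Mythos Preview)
Your proof is correct and follows the same approach as the paper: induct on $l(y)-l(x)$, reduce to a single covering step $y=sy'$ with $s\in\SA(y')$, and then use Theorem~\ref{main1} to see that $\mu(c_y,c_{y'})=1$ while $s\in\D(y)\setminus\D(y')$, giving the required non-superfluous edge. This is exactly the argument the paper itself sketches later in the proof of Proposition~\ref{cellsandstrongsubideals}.
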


\begin{remark}\label{basiselminq-1es}
It is an easy consequence of Theorem~\ref{main1} that \(\{\,c_{w} \mid s \in \D(w)\,\}\)
is a basis for the \((-q^{-1})\)-eigenspace of \(T_{s}\) in \(M_{\Gamma}\). In particular,
since Eq.~(\ref{S_0action}) shows that \(b_{w}\) is in this eigenspace when
\(s \in \WD(w)\), it follows from Lemma~\ref{uniCbasis1} that \(q_{y,w}=0\)
whenever there is an \(s \in \WD(w)\) such that \(s \notin \D(y)\).
\end{remark}

\begin{corollary}\label{ywmorethan3}
Let \(y,w \in \mathscr{I}\) with \(y < w\) and \(l(y) < l(w) - 1\). If
\(\mu_{y,w} \neq 0\) then \(\D(w) \subseteq \D(y)\).
\end{corollary}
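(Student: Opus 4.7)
The plan is to argue by cases on whether the descent $s \in \D(w)$ is weak or strong, and to use the recursive formulas of Corollary~\ref{recursion} together with Remark~\ref{basiselminq-1es} to rule out the possibility that $s \notin \D(y)$.

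First I would note that, by Corollary~\ref{polringqyw}, the hypothesis $\mu_{y,w}\neq 0$ forces $l(w)-l(y)$ to be odd; combined with $l(y)<l(w)-1$ this gives $l(w)-l(y)\geqslant 3$. I would then fix an arbitrary $s\in\D(w)$ and show $s\in\D(y)$. The weak-descent case is immediate: if $s\in\WD(w)$ then Eq.~\eqref{S_0action} puts $b_w$ in the $(-q^{-1})$-eigenspace of $T_s$, and by Remark~\ref{basiselminq-1es} this forces $q_{y,w}=0$ whenever $s\notin\D(y)$. Since $\mu_{y,w}\neq 0$ gives $q_{y,w}\neq 0$, we conclude $s\in\D(y)$.

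The strong-descent case is where Corollary~\ref{recursion} comes in. If $s\in\SD(w)$, set $w'=sw$, so that $w=sw'$ and $s\in\SA(w')$, and note that $l(w')=l(w)-1>l(y)$, so in particular $y\neq w'$ and $y<sw'$. I would then apply Corollary~\ref{recursion} to compute $q_{y,w}=q_{y,sw'}$. If $s\in\A(y)$, the formula reads $q_{y,w}=q\,q_{y,w'}$, and since $q_{y,w'}\in\mathcal{A}^{+}$ this polynomial has no constant term, so $\mu_{y,w}=0$, contradicting the hypothesis. Hence $s\in\D(y)$, completing the proof.

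I do not expect any serious obstacle: the one point that needs a moment of care is confirming that $y\neq w'$ so that the trivial case $q_{w',sw'}=1$ of Corollary~\ref{recursion} does not intrude, and it is precisely the condition $l(y)<l(w)-1$ (rather than merely $y<w$) that secures this.
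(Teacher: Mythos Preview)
Your proof is correct and follows essentially the same argument as the paper: split according to whether $s\in\D(w)$ is a strong or weak descent, then invoke Corollary~\ref{recursion}(i) in the strong case (giving $q_{y,w}=q\,q_{y,sw}$, hence constant term zero) and Remark~\ref{basiselminq-1es} in the weak case. The preliminary parity observation from Corollary~\ref{polringqyw} is correct but not actually needed---the hypothesis $l(y)<l(w)-1$ alone already ensures $y\neq sw$, which is all you use.
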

\begin{proof}
Suppose, for a contradiction, that \(\D(w) \cap \A(y)\ne \emptyset\), and choose
\(s \in \D(w) \cap A(y)\). If \(s \in \SD(w)\) then
the first formula in Corollary~\ref{recursion} gives \(q_{y,w} = qq_{y,sw}\), whence
\(\mu_{y,w} = 0\), since \(\mu_{y,w}\) is the constant term of \(q_{y,w}\). But if
\(s \in \WD(w)\) then \(q_{y,w} = 0\) by Remark~\ref{basiselminq-1es},
so that \(\mu_{y,w} = 0\) in this case also. In either case, the assumption
that \(\mu_{y,w} \neq 0\) is contradicted.\qed
\end{proof}

\section{Strong subideals of a \textit{W-}graph ideal}
\label{sec:4}

As above, let \((W,S)\) be a Coxeter system, and \(\mathcal{H} = \mathcal{H}(W)\).

\begin{definition}\label{wgsubideal}
Suppose that \((\mathscr{I}\!,\,J)\) and \((\mathscr{I}_0,J_0)\) are \(W\!\)-graph ideals.
We say that \(\mathscr{I}\) is a \textit{\(W\!\)-graph subideal of \(\mathscr{I}_0\)}
if \(\mathscr{I}\subseteq\mathscr{I}_0\) and \(J=J_0\).
\end{definition}
The following result is Theorem 4.4 of \cite{nguyen:wgideals2}. See
Remark~\ref{remproof} below for some comments relating to its proof.

\begin{theorem}\label{wgdetset}
Let \((\mathscr{I}_{0}, J)\) be a \(W\!\)-graph ideal, and let
\(C_{0} = \{\,c^{0}_{w} \mid w \in \mathscr{I}_{0}\,\}\) be the \(W\!\)-graph
basis of the module \(\mathscr{S}_0=\mathscr{S}(\mathscr{I}_{0},J)\). Suppose that
\(\mathscr I\subseteq\mathscr{I}_0\) 
and \(\{\,c_w^0\mid w\in\mathscr{I}_{0} \setminus \mathscr{I}\,\}\) is a closed subset
of \(C_{0}\). Then \(\mathscr I\) is a \(W\!\)-graph subideal of \(\mathscr{I}_{0}\).
Moreover, the corresponding \(W\!\)-graph \(\Gamma(\mathscr{I})\) is isomorphic to
the full subgraph of \(\Gamma(\mathscr{I}_0)\) on the vertex set
\(\{\,c_w^0\mid w\in\mathscr I\,\}\subseteq C_0\), with \(\tau\) and \(\mu\) functions
inherited from \(\Gamma(\mathscr{I}_{0})\).
\end{theorem}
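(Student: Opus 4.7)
The strategy is to construct \(\mathscr{S}(\mathscr{I}\!,\,J)\) as the quotient
\(\mathscr{S}:=\mathscr{S}_0/N\), where \(N\) is the \(\mathcal{A}\)-span of
\(U:=\{\,c_w^0\mid w\in\mathscr{I}_0\setminus\mathscr{I}\,\}\). The
characterisation of closed subsets given after Eq.~\eqref{wgraphdef} makes \(N\) an
\(\mathcal{H}\)-submodule, so \(\mathscr{S}\) is an \(\mathcal{H}\)-module with
\(\mathcal{A}\)-basis \(\{\,c_w:=c_w^0+N\mid w\in\mathscr{I}\,\}\). I would first
verify that \(\mathscr{I}\) is a \(\leqslant\lside\)-ideal: if \(w\in\mathscr{I}\)
and \(v\leqslant\lside w\) then \(v\in\mathscr{I}_0\), and
Corollary~\ref{leftorderpreorder} gives \(c_w^0\leqslant_{\Gamma(C_0)}c_v^0\), so
closedness of \(U\) would force \(c_w^0\in U\) whenever \(v\notin\mathscr{I}\),
a contradiction; the same argument (taking \(v=1\)) proves \(1\in\mathscr{I}\),
and hence \(J\subseteq S\setminus\mathscr{I}\) follows from
\(J\subseteq S\setminus\mathscr{I}_0\).

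To establish Definition~\ref{wgphdetelt}(i), set \(b_w:=b_w^0+N\) for
\(w\in\mathscr{I}\). Applying the projection to the expansion
\(b_w^0=c_w^0+q\sum_{y<w}q_{y,w}c_y^0\) from Lemma~\ref{uniCbasis1} yields a
triangular unipotent relation between \(\{c_w\}\) and \(\{b_w\}\) in \(\mathscr{S}\),
so \(\{b_w\}_{w\in\mathscr{I}}\) is indeed an \(\mathcal{A}\)-basis. The cases
\(s\in\SD(w)\), \(s\in\WD(w)\) and \(s\in\SA(w)\) of Eq.~\eqref{S_0action} descend
directly from the corresponding formulas in \(\mathscr{S}_0\), since the sets
\(\SD\) and \(\WD\) depend only on \(w\) and \(D_J\), and
\(\SA(w)\subseteq\SA_0(w)\) coincide exactly when \(sw\in\mathscr{I}\). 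The
serious case is \(s\in\WA(w)\), which splits according to whether
\(s\in\WA_0(w)\) or \(s\in\SA_0(w)\) with \(sw\in\mathscr{I}_0\setminus\mathscr{I}\);
in the latter sub-case \(T_sb_w=\pi(b_{sw}^0)\) must be rewritten in the
\(b\)-basis, and likewise \(\pi(b_y^0)\) must be expanded whenever
\(y\in\mathscr{I}_0\setminus\mathscr{I}\) appears in the \(\WA_0\)-formula. The
identity \(q_{w,sw}=1\) from Corollary~\ref{recursion} delivers the required
\(qb_w\) term, and the remaining coefficients are assembled from the polynomials
\(r^s_{y,w}\) for \(\mathscr{I}_0\) and the \(q_{\cdot,\cdot}\) polynomials. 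The
main technical obstacle I anticipate is showing that these assembled coefficients
lie in \(q\mathcal{A}^{+}\); I would attack this by induction on
\(l(sw)-l(w)\), leaning on Remark~\ref{Rmk:r^s_{w,w}} and
Corollary~\ref{polringqyw} to control signs and parities during the mixing of
\(\mathscr{I}_0\setminus\mathscr{I}\) contributions.

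The bar involution of \(\mathscr{S}_0\) fixes every \(c_w^0\), so it preserves
\(N\) and descends to an \(\mathcal{A}\)-semilinear involution on \(\mathscr{S}\)
that fixes each \(c_w\) and satisfies \(\overline{h\alpha}=\bar h\,\bar\alpha\);
since \(b_1=c_1\), condition~(ii) of Definition~\ref{wgphdetelt} holds, and
\((\mathscr{I},J)\) is thereby a \(W\!\)-graph ideal. The uniqueness clause of
Lemma~\ref{uniCbasis1} then identifies \(\{c_w\mid w\in\mathscr{I}\}\) with the
\(W\!\)-graph basis of \(\mathscr{S}(\mathscr{I},J)\), so the weights read off via
Eq.~\eqref{mu-symmetric} and the colours \(\tau(c_w)=\D(w)\) coincide with those
of the corresponding vertices of \(\Gamma(\mathscr{I}_0)\), establishing the
claimed isomorphism of \(\Gamma(\mathscr{I})\) with the full subgraph of
\(\Gamma(\mathscr{I}_0)\) on \(\{\,c_w^0\mid w\in\mathscr{I}\,\}\).
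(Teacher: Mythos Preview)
Your strategy—quotienting by the span of \(U\), deducing the \(\leqslant\lside\)-ideal property from Corollary~\ref{leftorderpreorder}, verifying Eq.~\eqref{S_0action} case by case on the images \(b_w=b_w^0+N\), descending the bar involution, and invoking the uniqueness clause of Lemma~\ref{uniCbasis1} to identify the \(W\!\)-graph basis—is exactly the paper's route (Remark~\ref{remproof} and Lemma~\ref{xtn}).

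The only slip is in your treatment of the \(\WA\) case. The quantity \(l(sw)-l(w)\) is identically~\(1\), so it cannot serve as an induction parameter, and the parity statement of Corollary~\ref{polringqyw} is not the tool you want. The clean argument is direct: for \(z\in\mathscr{I}_0\setminus\mathscr{I}\), project the relation \(b_z^0=c_z^0+q\sum_{y<z}q_{y,z}^0c_y^0\) (which kills \(c_z^0\)) and then invert the unipotent \(c\)-to-\(b\) transition restricted to~\(\mathscr{I}\). Since the inverse of a unipotent triangular matrix with off-diagonal entries in \(q\mathcal{A}^+\) again has off-diagonal entries in \(q\mathcal{A}^+\), one obtains \(\pi(b_z^0)=\sum_{y<z,\,y\in\mathscr{I}}r_{y,z}b_y\) with every \(r_{y,z}\in q\mathcal{A}^+\); and when \(l(z)-l(y)=1\) the Bruhat interval \((y,z)\) is empty, so \(r_{y,z}=q\cdot q_{y,z}^0=q\), which is precisely what extracts the leading \(qb_w\) term from \(T_sb_w=\pi(b_{sw}^0)\).
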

In view of Theorem~\ref{wgdetset} we make the following definition.

\begin{definition}\label{strongwgsubideal}
Let \((\mathscr{I}_0,\,J)\) be a \(W\!\)-graph ideal and let
\(C_0 = \{\,c_{w}^0 \mid w \in \mathscr{I}_0\}\) be the \(W\!\)-graph
basis of the module \(\mathscr{S}(\mathscr{I}_0,\,J)\). A
\textit{strong \(W\!\)-graph subideal of \(\mathscr{I}_0\)} is a \(W\!\)-graph
subideal \(\mathscr{I}\) such that \(\{\,c_w^0\mid w\in\mathscr{I}_0\setminus \mathscr{I}\}\)
is a closed subset of~\(C_0\).
\end{definition}

\begin{remark}\label{remproof}
Given the hypotheses of Theorem~\ref{wgdetset}, let \(\Gamma(\mathscr{I}_0)=(C_0,\mu,\tau)\)
be the \(W\!\)-graph obtained from \((\mathscr{I}_0,\,J)\), and let \(\mathscr{S}'\) be the
\(\mathcal{A}\)-submodule of \(\mathscr{S}_0 = M_\Gamma\) spanned by the set
\(C'=\{\,c_w^0\mid w\in\mathscr{I}_{0} \setminus \mathscr{I}\,\}\). The assumption
that \(C'\) is closed ensures, by Corollary~\ref{leftorderpreorder},
that \(\mathscr{I}\) is an ideal of \((W,\leqslant\lside)\). Moreover, \(\mathscr{S}'\) is an
\(\mathcal{H}(W)\)-submodule of~\(\mathscr{S}_0\). Now, defining \(f\) to be the
natural map \(\mathscr{S}_0\to\mathscr{S}_0/\mathscr{S}'\), it is readily checked
that for all \(s\in S\) and \(w\in\mathscr{I}\),
\[
     T_{s}f(c_w^0) = \begin{cases}
              -q^{-1}f(c_w^0) \quad &\text{if \(s \in \tau(w)\)}\\
              qf(c_w^0) + \sum_{\{x \in \mathscr{I} \mid s \in \tau(x)\}}\mu(x,w)f(c_x^0)
              \quad &\text{if \(s \notin \tau(w)\)},
     \end{cases}
\]
since \(f(c_y^0)=0\) whenever \(y\in\mathscr{I}_0\setminus\mathscr{I}\). The proof of
Theorem~\ref{wgdetset} proceeds by showing that if
\(\{\,b^{0}_{w}\mid w\in\mathscr{I}_0\,\}\) is the standard basis of \(\mathscr{S}_0\)
then for all \(w\in\mathscr{I}_0\setminus\mathscr{I}\),
\[
f(b^{0}_{w})\, = \!\!\sum_{y\in\mathscr{I}\!\!,\,y<w}\!\!r_{y,w}f(b^{0}_{y})
\]
for some polynomials \(r_{y,w} \in q\mathcal{A}^{+}\!\), with
\(r_{y,w} = q\) if \(y = sw\) for some \(s\in S\). Then Lemma~\ref{xtn} below,
which extends part of the proof of Theorem~\ref{wgdetset} given in
\cite{nguyen:wgideals2}, shows that
\(\mathscr{I}\) satisfies Definition~\ref{wgphdetelt},
with \(\mathscr{S}(\mathscr{I}\!,\,J) = \mathscr{S}_0/\mathscr{S}'\) and with
\(\{\,f(b^{0}_{w})\mid w\in\mathscr{I}\,\}\) as its standard basis.
The proof of Lemma~\ref{xtn}
also shows that \(\Gamma(\mathscr{I})\) inherits its \(\mu\) and \(\tau\)
functions from~\(\Gamma(\mathscr{I}_0)\).

Lemma~\ref{xtn} is needed in the proof of Theorem~\ref{indwgsubideal}
below.
\end{remark}

\begin{lemma}\label{xtn}
Assume that \((\mathscr{I}_0,J)\) is a \(W\!\)-graph ideal and that
\(\mathscr{I}\subseteq\mathscr{I}_0\) is an ideal of \((W,\leqslant\lside)\).
Let \(B_0=\{\,b^{0}_{w}\mid w\in\mathscr{I}_0\,\}\) be the standard basis of
\(\mathscr{S}_0=\mathscr{S}(\mathscr{I}_0,J)\), and
suppose that there exists an \(\mathcal{A}\)-free \(\mathcal{H}\)-module
\(\mathscr{S}\) and an \(\mathcal{H}\)-module homomorphism
\(f\colon\mathscr{S}_0\to\mathscr{S}\) such that
\begin{itemize}[topsep=1 pt]
\item[\textup{(i)}]
\(\{\,f(b^{0}_{w})\mid w\in\mathscr{I}\,\}\) is an \(\mathcal{A}\)-basis
of \(\mathscr{S}\),
\item[\textup{(ii)}]
the kernel of \(f\) is invariant under the bar involution on~\(\mathscr{S}_0\), and
\item[\textup{(iii)}] for each \(w\in\mathscr{I}_0\setminus\mathscr{I}\) and
\(y\in\mathscr{I}\) there is a polynomial \(r_{y,w} \in q\mathcal{A}^{+}\!\)
such that \(r_{y,w} = q\) if \(y = sw\) for some \(s\in S\), and
\(f(b^{0}_{w}) = \sum_{\{y\in\mathscr{I}\mid y<w\}}r_{y,w}f(b^{0}_{y})\).
\end{itemize}
Then \(\mathscr{I}\) is a strong \(W\!\)-graph subideal of \(\mathscr{I}_0\).
\end{lemma}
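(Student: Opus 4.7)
The plan is to verify the two conditions of Definition~\ref{wgphdetelt} for the pair \((\mathscr{I},J)\) with \(\mathscr{S}(\mathscr{I},J):=\mathscr{S}\) and standard basis \(B=\{b_w:=f(b^0_w)\mid w\in\mathscr{I}\}\) (so that \((\mathscr{I},J)\) becomes a \(W\!\)-graph ideal), and then to establish strongness by identifying \(\ker f\) explicitly with the \(\mathcal{A}\)-span of \(\{c^0_w\mid w\in\mathscr{I}_0\setminus\mathscr{I}\}\subseteq C_0\).

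Condition~(i) of Definition~\ref{wgphdetelt} reduces to a case analysis showing that \(T_sb_w=f(T_sb^0_w)\) matches Eq.~(\ref{S_0action}). Using that \(\mathscr{I}\) is an ideal of \(\leqslant\lside\), one checks that \(\SD(\mathscr{I}_0,w)=\SD(\mathscr{I},w)\), \(\WD(\mathscr{I}_0,w)=\WD(\mathscr{I},w)\), that the sub-case \(s\in\SA(\mathscr{I}_0,w)\) with \(sw\in\mathscr{I}\) falls into \(\SA(\mathscr{I},w)\), and that \(\WA(\mathscr{I}_0,w)\subseteq\WA(\mathscr{I},w)\); all these pass through \(f\) trivially. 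The two cases needing hypothesis~(iii) are (a) \(s\in\SA(\mathscr{I}_0,w)\) with \(sw\in\mathscr{I}_0\setminus\mathscr{I}\), which gives \(s\in\WA(\mathscr{I},w)\); here \(T_sb_w=f(b^0_{sw})=\sum_{y\in\mathscr{I},\,y<sw}r_{y,sw}b_y\) by~(iii), and the stipulated value \(r_{w,sw}=q\) produces the leading \(qb_w\) term while the remaining coefficients \(-r_{y,sw}\in q\mathcal{A}^+\) furnish the \(r^s_{y,w}\) required by Eq.~(\ref{S_0action}); and (b) \(s\in\WA(\mathscr{I}_0,w)\), where substituting~(iii) to rewrite each \(f(b^0_z)\) for \(z\in\mathscr{I}_0\setminus\mathscr{I}\) re-collects the expansion into the required form with coefficients in \(q\mathcal{A}^+\). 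Condition~(ii) follows because hypothesis~(ii) makes \(\ker f\) bar-invariant, so the bar involution on \(\mathscr{S}_0\) descends to \(\mathscr{S}_0/\ker f\cong\mathscr{S}\), fixing \(b_1=f(b^0_1)\) and intertwining the \(\mathcal{H}\)-action. Lemma~\ref{uniCbasis1} then supplies a \(W\!\)-graph basis \(\{c_w\mid w\in\mathscr{I}\}\) for \(\mathscr{S}\).

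For strongness I would prove by induction on \(l(w)\) the combined claim that \(f(c^0_w)=c_w\) when \(w\in\mathscr{I}\) and \(f(c^0_w)=0\) when \(w\in\mathscr{I}_0\setminus\mathscr{I}\). In both cases, applying \(f\) to the defining relation \(c^0_w=b^0_w-q\sum_{y<w,\,y\in\mathscr{I}_0}q^0_{y,w}c^0_y\) and using the inductive hypothesis to discard contributions from \(y\in\mathscr{I}_0\setminus\mathscr{I}\) produces an expression for \(f(c^0_w)\) that is automatically bar-invariant, since \(\overline{c^0_w}=c^0_w\) and bar descends through~\(f\). When \(w\in\mathscr{I}\), this rearranges to \(b_w=f(c^0_w)+q\sum_{y<w,\,y\in\mathscr{I}}q^0_{y,w}c_y\), so each \(c_y\)-coefficient of \(c_w-f(c^0_w)\) lies in \(q\mathbb{Z}[q]\) and is bar-invariant; since the only bar-invariant element of \(q\mathbb{Z}[q]\) is \(0\), this forces \(f(c^0_w)=c_w\). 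When \(w\in\mathscr{I}_0\setminus\mathscr{I}\), expanding \(f(b^0_w)=\sum r_{y,w}b_y\) via~(iii) and substituting \(b_y=c_y+q\sum q_{z,y}c_z\) displays \(f(c^0_w)\) itself as a linear combination of the \(c_z\) with every coefficient in \(q\mathbb{Z}[q]\), crucially using \(r_{y,w}\in q\mathcal{A}^+\); the same bar-invariance argument gives \(f(c^0_w)=0\).

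These identifications yield \(\ker f=\mathrm{Span}_{\mathcal{A}}\{c^0_w\mid w\in\mathscr{I}_0\setminus\mathscr{I}\}\). Since \(\ker f\) is an \(\mathcal{H}\)-submodule of \(M_{\Gamma(\mathscr{I}_0)}\) spanned by a subset of the \(W\!\)-graph basis \(C_0\), the characterization recalled in Section~\ref{sec:2} forces \(\{c^0_w\mid w\in\mathscr{I}_0\setminus\mathscr{I}\}\) to be closed in \(C_0\), which is exactly the content of Definition~\ref{strongwgsubideal}. The main obstacle is the second case of the induction: the vanishing of \(f(c^0_w)\) for \(w\notin\mathscr{I}\) depends crucially on the \(q\mathcal{A}^+\)-positivity of the \(r_{y,w}\) in hypothesis~(iii) interacting with the bar-invariance supplied by hypothesis~(ii), and it is precisely this interplay that encodes the strong-subideal property.
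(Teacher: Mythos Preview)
Your proposal is correct and follows essentially the same approach as the paper: verify Definition~\ref{wgphdetelt} for \((\mathscr{I},J)\) via the case analysis on descent/ascent types (using hypothesis~(iii) for the new weak ascents), descend the bar involution via hypothesis~(ii), then prove by induction on \(l(w)\) that \(f(c^0_w)=c_w\) or \(0\) using bar-invariance against the \(q\mathcal{A}^+\) coefficients, and conclude closedness from the fact that \(\ker f\) is an \(\mathcal{H}\)-submodule spanned by a subset of~\(C_0\). Your treatment of case~(b), where \(s\in\WA_J(\mathscr{I}_0,w)\) and the sum in Eq.~\eqref{S_0action} runs over \(\mathscr{I}_0\) so that hypothesis~(iii) is needed to rewrite the terms with \(y\in\mathscr{I}_0\setminus\mathscr{I}\), is in fact slightly more explicit than the paper's presentation.
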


\begin{proof}
The first step is to show that \((\mathscr{I}\!,\,J)\) is a \(W\!\)-graph ideal.
We define \(b_w=f(b^{0}_{w})\) for all \(w\in\mathscr{I}\!\), so that
by hypothesis \(B=\{\,b_w\mid w\in\mathscr{I}\,\}\) is an \(\mathcal{A}\)-basis of
\(\mathscr{S}\), and proceed to show that the requirements of Definition~\ref{wgphdetelt}
are satisfied. Hypothesis (ii) above ensures that \(\mathscr{S}\) admits a bar involution
such that \(\overline{f(\alpha)}=f(\overline\alpha)\) for all~\(\alpha\in\mathscr{S}_0\),
and the requirements that \(\overline{b_1}=b_1\) and that
\(\overline{h\alpha} =\overline{h}\overline{\alpha}\) for all \(h\in\mathcal{H}\) and
\(\alpha \in\mathscr{S}\) follow immediately by applying \(f\) to
the corresponding formulas in~\(\mathscr{S}_0\).

Since \((\mathscr{I}_0,J)\) is a \(W\!\)-graph ideal and \(f\) is an
\(\mathcal{H}\)-module homomorphism, it follows from Definition~\ref{wgphdetelt}
that for all \(s\in S\) and \(w\in \mathscr{I}_0\),
\[
T_{s}f(b^{0}_{w}) =
\begin{cases}
  f(b^{0}_{sw})  & \text{if \(s \in \SA(\mathscr{I}_0,w)\),}\\
  f(b^{0}_{sw}) + (q - q^{-1})f(b^{0}_{w}) & \text{if \(s \in \SD(\mathscr{I}_0,w)\),}\\
  -q^{-1}f(b^{0}_{w}) & \text{if \(s \in \WD_J(\mathscr{I}_0,w)\),}\\
  qf(b^{0}_{w}) - \sum_{\{y \in \mathscr I\mid y < sw\}} r^s_{y,w}f(b^{0}_{y}) &
  \text{if \(s \in \WA_J(\mathscr{I}_0,w)\),}
  \end{cases}
\]
for some polynomials \(r^s_{y,w} \in q\mathcal{A}^{+}\!\). Note that since
\(\mathscr{I}\subseteq\mathscr{I}_0\) it follows immediately from the definitions
that if \(w\in\mathscr{I}\) then \(\SD(\mathscr{I}, w)=\SD(\mathscr{I}_{0}, w)\) and
\(\WD_{J}(\mathscr{I},w)=\WD_{J}(\mathscr{I}_{0},w)\), and
\(\SA(\mathscr{I})\subseteq\SA(\mathscr{I}_0)\). Thus if \(s\in S\) and
\(w\in\mathscr{I}\) then
\[
T_{s}b_{w} =
\begin{cases}
  b_{sw}  & \text{if \(s \in \SA(\mathscr{I}\!,\,w)\),}\\
  b_{sw} + (q - q^{-1})b_{w} & \text{if \(s \in \SD(\mathscr{I}\!,\,w)\),}\\
  -q^{-1}b_{w} & \text{if \(s \in \WD_J(\mathscr{I}\!,\,w)\),}\\
  qb_{w} - \sum_{\{y \in \mathscr I\mid y < sw\}} r^s_{y,w}b_{y} &
  \text{if \(s \in \WA_J(\mathscr{I}_0,w)\),}
\end{cases}
\]
and to complete the proof that Eq.~(\ref{S_0action}) holds in all cases it remains
to show that it holds whenever \(s\) is in \(\WA_J(\mathscr{I}\!,\,w)\) and in
\(\SA(\mathscr{I}_0,w)\). In this case we have
\(sw\in\mathscr{I}_0\) and \(sw\notin\mathscr{I}\!\), and in view of hypothesis~(iii)
it follows that
\[
T_sb_w = f(b^{0}_{sw}) = \sum_{{\substack{y\in\mathscr{I}\\y<sw}}}r_{y,sw}b_{y}
=qb_w+ \sum_{{\substack{y\in\mathscr{I}\\y<w}}}r_{y,sw}b_{y}
\]
by Lemma~\ref{lifting1} and the fact that \(r_{w,sw}=q\) (by hypothesis).
So Eq.~(\ref{S_0action}) does indeed hold, with \(r^s_{y,w}=-r_{y,sw}\)
when \(s\in \WA_J(\mathscr{I}\!,\,w) \cap \SA(\mathscr{I}_0,w)\), and
hence \((\mathscr{I}\!,\,J)\) is a \(W\!\)-graph ideal.

Now let \(C_0=\{\,c^{0}_{w}\mid w \in \mathscr{I}_{0}\,\}\) be the \(W\!\)-graph basis
of \(\mathscr{S}_0\) and let \(C=\{\,c_{w}\mid w \in \mathscr{I}\,\}\) be the
\(W\!\)-graph basis of \(\mathscr{S}\). Thus, by Theorem~\ref{uniCbasis1}, for
all \(w\in\mathscr{I}_0\) there exist polynomials \(q^{0}_{y,w}\in\mathcal{A}^+\)
such that
\begin{equation}\label{bcI_0}
c^{0}_{w}= b^{0}_{w} - q\sum_{\substack{y < w\\ y\in\mathscr{I}_0}} q^{0}_{y,w}c^{0}_{y}
\end{equation}
and for all \(w\in\mathscr{I}\) there exist polynomials \(q_{y,w}\in\mathcal{A}^+\)
such that
\begin{equation}\label{bcI}
c_{w}= b_{w} - q\sum_{\substack{y < w\\ y\in\mathscr{I}}} q_{y,w}c_{y}.
\end{equation}
We use induction on \(l(w)\) to show that for all \(w\in\mathscr{I}_0\),
\[
f(c^{0}_{w})=\begin{cases}
            c_{w}&\text{if \(w\in\mathscr{I}\),}\\
            0 &\text{if \(w\notin\mathscr{I}\).}\end{cases}
\]
In the course of this we shall also show that \(q_{y,w}=q^{0}_{y,w}\) whenever
\(y,\,w\in\mathscr{I}\) with \(y<w\).

In the case \(l(w)=0\) we have \(w=1\) and \(f(c^{0}_{w})=f(b^{0}_{w})=b_w=c_w\),
as required. Now assume that \(w\in\mathscr{I}_0\) and \(l(w)>1\). Applying \(f\) to both sides
of Eq.~(\ref{bcI_0}) gives
\begin{align*}
f(c^{0}_{w})&= f(b^{0}_{w}) - q\sum_{\substack{y < w\\ y\in\mathscr{I}_0}} q^{0}_{y,w}f(c^{0}_{y})\\
&= f(b^{0}_{w}) - q\sum_{\substack{y < w\\ y\in\mathscr{I}}} q^{0}_{y,w}c_{y}
\end{align*}
by the inductive hypothesis. If \(w\in \mathscr{I}\) then \(f(b^{0}_{w})=b_{w}\), and using
Eq.~(\ref{bcI}) we find that
\[
f(c^{0}_{w})-c_{w}=\sum_{\substack{y < w\\ y\in\mathscr{I}}} q(q_{y,w}-q^{0}_{y,w})c_{y}.
\]
But the left hand side is fixed by the bar involution, as are the basis elements \(c_{y}\)
on the right hand side. So the coefficients \(q(q_{y,w}-q^{0}_{y,w})\) must also be fixed.
But since \(q(q_{y,w}-q^{0}_{y,w})\) is a polynomial in \(q\) with zero constant term, and
since \(\overline{q}=q^{-1}\), this forces \(q(q_{y,w}-q^{0}_{y,w})=0\). Hence
\(f(c^{0}_{w})=c_{w}\) and \(q_{y,w}=q^{0}_{y,w}\), as required. On the other hand, if
\(w\notin \mathscr{I}\) then by our hypothesis~(iii),
\[
f(b^{0}_{w}) = \sum_{{\substack{y<w\\y\in\mathscr{I}}}}r_{y,w}b_{y}
\]
where the \(r_{y,w}\) are polynomials in \(q\) with zero constant term, and so
(using Eq.~\ref{bcI})
\[
f(c^{0}_{w})=\sum_{{\substack{y<w\\y\in\mathscr{I}}}}r_{y,w}\Bigl(c_{y}+
q\sum_{\substack{z < y\\ z\in\mathscr{I}}} q_{z,y}c_{z}\Bigr)
- q\sum_{\substack{y < w\\ y\in\mathscr{I}}} q^{0}_{y,w}c_{y}.
\]
Since \(f(c^{0}_{w})\) is fixed by the bar involution, while the right hand side is
a linear combination of the basis elements \(c_y\) in which all the coefficients
are polynomials with zero constant term, it follows that \(f(c^{0}_{w})=0\), as required.

It is now clear that \(C'=\{\,c^{0}_{w}\mid w\in\mathscr{I}_0\setminus\mathscr{I}\,\}\)
spans an \(\mathcal{H}\)-submodule of~\(\mathscr{S}_0\), namely the kernel of~\(f\). Hence
\(C'\) is a closed subset of~\(C_0\), and so \(\mathscr{I}\) is a strong
\(W\!\)-graph subideal of~\(\mathscr{I}_0\).\qed
\end{proof}

\begin{remark}\label{mu-and-tau}
In the situation of Lemma~\ref{xtn}, let \(\Gamma_0=(C_0,\mu_0,\tau_0)\) be the
\(W\!\)-graph obtained from \(\mathscr{I}_0\) and \(\Gamma=(C,\mu,\tau)\) the
\(W\!\)-graph obtained from \(\mathscr{I}\). Recall that if
\(\mu_{y,w}\) denotes the constant term of the polynomial \(q_{y,w}\), then
for all \(y,\,w\in\mathscr{I}\),
\[
\mu(c_{y},c_{w}) =
                     \begin{cases}
                         \mu_{y,w} &\text{if \(y < w\),}\\
                         \mu_{w,y} &\text{if \(w < y\),}\\
                         0 &\text{otherwise}.
                     \end{cases}
\]
The parameters \(\mu_0(c^{0}_y,c^{0}_{w})\), for \(y,\,w\in\mathscr{I}_0\), are
similarly obtained from the polynomials \(q^{0}_{y,w}\). Since we showed in the
proof that \(q^{0}_{y,w}=q_{y,w}\) whenever \(y,\,w\in\mathscr{I}\) with \(y<w\), it
follows that \(\mu(c_y,c_w)=\mu_0(c^{0}_{y},c^{0}_{w})\) whenever \(y,\,w\in\mathscr{I}\).
Furthermore, \(\tau(c_{w})=\tau_0(c^{0}_{w})\) whenever \(w\in\mathscr{I}\),
since by definition \(\tau(c_{w})=D_J(\mathscr{I}\!,\,w)\) and
\(\tau(c^{0}_{w})=D_J(\mathscr{I}_0,w)\), and, as we noted in the proof, these are equal
if \(w\in\mathscr{I}\), since \(\SD(\mathscr{I}\!,\,w) = \SD(\mathscr{I}_{0},w)\) and
\(\WD_{J}(\mathscr{I}\!,\,w) = \WD_{J}(\mathscr{I}_{0},w)\). Thus \(\Gamma\) is
isomorphic to the full (decorated) subgraph of \(\Gamma_0\) on the
vertices \(\{\,c^{0}_{w}\mid w\in\mathscr{I}\,\}\).
\end{remark}

\begin{remark}\label{characterization}
The converse of Lemma~\ref{xtn} is also true: if \((\mathscr{I}_0,J)\) is a \(W\!\)-graph
ideal and \(\mathscr{I}\) is a strong \(W\!\)-graph subideal of \(\mathscr{I}_0\),
then \(\mathscr{S}=\mathscr{S}(\mathscr{I}\!,\,J)\) is an \(\mathcal{A}\)-free
\(\mathcal{H}\)-module, and there is an \(\mathcal{H}\)-module homomorphism
\(f\colon\mathscr{S}(\mathscr{I}_0,J)\to\mathscr{S}\) satisfying conditions (i), (ii) and~(iii) of
Lemma~\ref{xtn}. Indeed, the proof of Theorem~\ref{wgdetset} proceeded by constructing
the required~\(f\), and in the course of this the following properties of~\(f\) were
established:
\begin{itemize}[topsep=2pt]
\item[(i)]
\(f(c^{0}_{w})=c_w\) for all \(w\in\mathscr{I}\) and \(f(c^{0}_{w})=0\) for all
\(w\in\mathscr{I}_0\setminus\mathscr{I}\),
\item[(ii)]\(\,f(b^{0}_{w})=b_w\) for all \(w\in\mathscr{I}\!\), while for all
\(w\in\mathscr{I}_0\setminus\mathscr{I}\) there exist polynomials
\(r_{y,w}\in q\mathcal{A}^+\) with \(r_{y,w}=q\) if \(wy^{-1}\in S\) and
\(f(b^{0}_{w}) = \sum_{\{y\in\mathscr{I}\mid y<w\}}r_{y,w}f(b^{0}_{y})\),
\item[(iii)]
\(f(\overline{\alpha})=\overline{f(\alpha)}\) for all \(\alpha\in\mathscr{S}_0\).
\end{itemize}
\end{remark}

\begin{proposition}\label{closedIntUnion}
If \(\mathscr{I}_0\) is a \(W\!\)-graph ideal and \(\mathscr{I}_1\) and
\(\mathscr{I}_2\) are strong \(W\!\)-graph subideals of \(\mathscr{I}_0\), then
\(\mathscr{I}_1 \cup \mathscr{I}_2\) and \(\mathscr{I}_1 \cap \mathscr{I}_2\) are
strong \(W\!\)-graph subideals of \(\mathscr{I}_0\).
\end{proposition}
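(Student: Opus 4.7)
The plan is to reduce the statement to a routine fact about closed subsets of the vertex set $C_0$ of $\Gamma(\mathscr{I}_0)$, and then invoke Theorem~\ref{wgdetset}. Recall that, by definition, a subset $U\subseteq C_0$ is closed if and only if for every $v\in U$ each $u\in C_0$ with $\mu(u,v)\neq 0$ and $\tau(u)\nsubseteq\tau(v)$ lies in~$U$.

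First I would verify that the family of closed subsets of $C_0$ is closed under pairwise unions and intersections. If $U_1$ and $U_2$ are closed and $v\in U_1\cup U_2$, then $v\in U_i$ for some~$i$, and any $u\in C_0$ with $\mu(u,v)\neq 0$ and $\tau(u)\nsubseteq \tau(v)$ lies in $U_i$ by closedness, hence in $U_1\cup U_2$. If $v\in U_1\cap U_2$ then the same $u$ lies in both $U_1$ and $U_2$, hence in $U_1\cap U_2$. This settles closedness for both operations.

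Now, since $\mathscr{I}_1$ and $\mathscr{I}_2$ are strong $W\!$-graph subideals, the sets
\[
U_i=\{\,c^0_w\mid w\in\mathscr{I}_0\setminus\mathscr{I}_i\,\},\qquad i=1,2,
\]
are closed in~$C_0$. Taking complements inside $\mathscr{I}_0$,
\[
U_1\cap U_2=\{\,c^0_w\mid w\in\mathscr{I}_0\setminus(\mathscr{I}_1\cup\mathscr{I}_2)\,\},\qquad
U_1\cup U_2=\{\,c^0_w\mid w\in\mathscr{I}_0\setminus(\mathscr{I}_1\cap\mathscr{I}_2)\,\},
\]
and both are closed in $C_0$ by the previous paragraph. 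Since $\mathscr{I}_1\cup\mathscr{I}_2$ and $\mathscr{I}_1\cap\mathscr{I}_2$ are clearly contained in~$\mathscr{I}_0$, Theorem~\ref{wgdetset} implies that each is a $W\!$-graph subideal of $\mathscr{I}_0$; moreover, since the complementary sets of $C_0$-vertices are closed, each is a \emph{strong} $W\!$-graph subideal by Definition~\ref{strongwgsubideal}.

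There is no real obstacle here: the entire content of the proposition is the elementary observation that closed subsets of a $W\!$-graph are stable under finite unions and intersections, combined with the De Morgan identities that translate this into the corresponding statement for strong $W\!$-graph subideals via Theorem~\ref{wgdetset}.
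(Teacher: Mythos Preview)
Your proposal is correct and follows essentially the same approach as the paper: the paper's proof is the one-line observation that intersections and unions of ideals of $(W,\leqslant\lside)$ are ideals and that intersections and unions of closed sets are closed, which is exactly what you have spelled out in detail via the De~Morgan identities and Theorem~\ref{wgdetset}.
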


\begin{proof} This is clear, since intersections and unions of ideals of \((W,\leqslant\lside)\) are
ideals, and, for any \(W\!\)-graph, intersections and unions of closed sets are closed. \qed
\end{proof}
We now come to the main result of this section: induction of \(W\!\)-graph ideals preserves
the strong subideal relationship.

\begin{theorem}\label{indwgsubideal}
Suppose that \(J\subseteq K\subseteq S\) and that \((\mathscr{I}_{0},J)\) is a \(W_K\)-graph
ideal. If \(\mathscr{I}\) is a strong \(W_K\)-graph subideal of \(\mathscr{I}_{0}\) then
\(D_{K}\mathscr{I}\) is a strong \(W\!\)-graph subideal of \(D_{K}\mathscr{I}_{0}\).
\end{theorem}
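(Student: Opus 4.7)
The plan is to apply Lemma~\ref{xtn} to the pair $(D_K\mathscr{I},D_K\mathscr{I}_0)$, starting from the $\mathcal{H}(W_K)$-homomorphism supplied by Remark~\ref{characterization} (the converse of Lemma~\ref{xtn}). That remark yields an $\mathcal{H}(W_K)$-module map $f_0\colon\mathscr{S}(\mathscr{I}_0,J)\to\mathscr{S}(\mathscr{I},J)$ sending $b^0_w\mapsto b_w$ for $w\in\mathscr{I}$, intertwining bar involutions, and expressing $f_0(b^0_w)=\sum_{y\in\mathscr{I}\!,\,y<w}r_{y,w}f_0(b^0_y)$ for $w\in\mathscr{I}_0\setminus\mathscr{I}$ with $r_{y,w}\in q\mathcal{A}^+$ and $r_{y,w}=q$ whenever $wy^{-1}\in K$. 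I then form $\tilde f=1\otimes f_0\colon\mathcal{H}\otimes_{\mathcal{H}(W_K)}\mathscr{S}(\mathscr{I}_0,J)\to\mathcal{H}\otimes_{\mathcal{H}(W_K)}\mathscr{S}(\mathscr{I},J)$ and identify source and target with $\mathscr{S}(D_K\mathscr{I}_0,J)$ and $\mathscr{S}(D_K\mathscr{I},J)$ via the induction construction of \cite[Section~9]{howvan:wgraphDetSets}; under this identification $T_d\otimes b^0_w$ becomes $b_{dw}=T_db^0_w$, consistent with Remark~\ref{Rmk:T_wb_1}.

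I then verify the three hypotheses of Lemma~\ref{xtn} for~$\tilde f$. Condition~(i) is immediate: for $dw\in D_K\mathscr{I}$ we have $\tilde f(b_{dw})=T_db_w=b_{dw}$, and these exhaust an $\mathcal{A}$-basis of the target. For condition~(ii), the bar involution $\overline{h\otimes\alpha}=\overline h\otimes\overline\alpha$ is well-defined on the tensor product because the bar on $\mathcal{H}$ is an algebra automorphism restricting to that on $\mathcal{H}(W_K)$; by uniqueness (same fixed vector $b_1=1\otimes b^0_1$, same rule $\overline{h\alpha}=\overline h\overline\alpha$) it coincides with the canonical bar involution on $\mathscr{S}(D_K\mathscr{I}_0,J)$, and $\tilde f$ intertwines bar because $f_0$ does, so $\ker\tilde f$ is bar-invariant. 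For condition~(iii), setting $r_{dy,dw}:=r_{y,w}$ (and all other $r_{y',w'}:=0$), application of $T_d$ to the defining relation for $f_0(b^0_w)$ yields $\tilde f(b_{dw})=\sum_y r_{y,w}b_{dy}$, and the only remaining point is the leading-coefficient clause. Suppose $w'=dw\in D_K\mathscr{I}_0\setminus D_K\mathscr{I}$, $y'=d'y\in D_K\mathscr{I}$, and $s:=w'(y')^{-1}\in S$, so $sd'=d(wy^{-1})\in dW_K$. If $sd'\in D_K$ then uniqueness of the $D_K$-representative in $dW_K$ gives $sd'=d$, hence $w=y$, contradicting $w\notin\mathscr{I}\ni y$. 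Thus $sd'\notin D_K$, and case~(iii) of Lemma~\ref{deo1} yields $(d')^{-1}sd'\in K$, so $sd'\in d'W_K\cap dW_K$, which forces $d=d'$ and $wy^{-1}=d^{-1}sd\in K$. The strong $W_K$-subideal hypothesis then gives $r_{y',w'}=r_{y,w}=q$, as required.

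The principal obstacle is the identification of $\mathcal{H}\otimes_{\mathcal{H}(W_K)}\mathscr{S}(\mathscr{I}_0,J)$ with $\mathscr{S}(D_K\mathscr{I}_0,J)$ compatibly with standard bases and bar involutions; this is furnished by the induction theorem of \cite[Section~9]{howvan:wgraphDetSets}, and once it is invoked the remaining steps, including the short combinatorial argument via Lemma~\ref{deo1}, fall into place.
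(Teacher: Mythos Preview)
Your proposal is correct and follows essentially the same approach as the paper: obtain the $\mathcal{H}(W_K)$-map $f_0$ from Remark~\ref{characterization}, induce it to $\tilde f=1\otimes f_0$, identify source and target with $\mathscr{S}(D_K\mathscr{I}_0,J)$ and $\mathscr{S}(D_K\mathscr{I},J)$ via \cite[Section~9]{howvan:wgraphDetSets}, and verify the three hypotheses of Lemma~\ref{xtn}. Your treatment of condition~(iii) is in fact more careful than the paper's, which asserts the leading-coefficient clause without further comment; you supply the combinatorial argument (via Lemma~\ref{deo1}) showing that $w'(y')^{-1}\in S$ with $w'\in D_K\mathscr{I}_0\setminus D_K\mathscr{I}$ and $y'\in D_K\mathscr{I}$ forces the $D_K$-components to coincide and $wy^{-1}\in K$, so that $r_{y',w'}=r_{y,w}=q$ as required.
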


\begin{proof}
Write \(\mathcal{H}_K\) for the Hecke algebra associated with the Coxeter system
\((W_{K},K)\), regarded as a subalgebra of~\(\mathcal{H}\). Let \(\mathscr{S}_{0}\)
and \(\mathscr{S}\) be the \(\mathcal{H}_{K}\)-modules derived fron the \(W_K\)-graphs
\((\mathscr{I}_{0},J)\) and~\((\mathscr{I}\!,\,J)\), and let
\(B_{0} = \{\,b^{0}_{w}\mid w\in \mathscr{I}_{0}\,\}\) and
\(B = \{\,b_{w}\mid w\in \mathscr{I}\,\}\) be their standard bases. By
Remark~\ref{characterization} there is an \(\mathcal{H}_{K}\)-module homomorphism
\(f\colon\mathscr{S}_0\to\mathscr{S}\) satisfying
\begin{itemize}
\item[(i)]
\(f(\overline{\alpha})=\overline{f(\alpha)}\) for all \(\alpha\in\mathscr{S}_0\),
\item[(ii)]\(\,f(b^{0}_{w})=b_w\) for all \(w\in\mathscr{I}\!\), and for all
\(w\in\mathscr{I}_0\setminus\mathscr{I}\) there exist \(r_{y,w}\in q\mathcal{A}^+\)
with \(r_{y,w}=q\) if \(wy^{-1}\in S\) and
\(f(b^{0}_{w}) = \sum_{\{y\in\mathscr{I}\mid y<w\}}r_{y,w}b_{y}\).
\end{itemize}
We know from Theorem~9.2 of \cite{howvan:wgraphDetSets} that \(D_{K}\mathscr{I}_{0}\)
and \(D_{K}\mathscr{I}\) are \(W\!\)-graph ideals, and the associated \(\mathcal{H}\)-modules
are the induced modules
\(\mathscr{S}^{*}_{0}=\mathcal{H}\otimes_{\mathcal{H}_K}\mathscr{S}_{0}\)
and \(\mathscr{S}^{*}=\mathcal{H}\otimes_{\mathcal{H}_K}\mathscr{S}\!\).
Moreover, \(B^{*}_{0}=\{\,T_d\otimes b^{0}_{w}\mid d\in D_K, w\in\mathscr{I}_0\,\}\)
and \(B^{*}=\{\,T_d\otimes b_{w}\mid d\in D_K, w\in\mathscr{I}\,\}\) are the
standard bases of~\(\mathscr{S}^{*}_{0}\) and \(\mathscr{S}^{*}\!\!\), and the bar
involutions satisfy
\(\overline{h\otimes \alpha}=\overline{h}\otimes\overline{\alpha}\) for all
\(h\in\mathcal{H}\) and \(\alpha\) in \(\mathscr{S}_0\) or \(\mathscr{S}\).
Let \(f^{*}\colon\mathscr{S}^{*}_{0}\to\mathscr{S}^{*}\) be the \(\mathcal{H}\)-module
homomorphism induced from the \(\mathcal{H}_K\)-module homomorphism \(f\), so
that \(f^{*}(h\otimes \alpha)=h\otimes f(\alpha)\) for all \(h\in\mathcal{H}\)
and \(\alpha\in\mathscr{S}_{0}\). The conclusion that \(D_{K}\mathscr{I}\)
is a strong \(W\!\)-graph subideal of \(D_{K}\mathscr{I}_{0}\) will follow
by an application of Lemma~\ref{xtn}, if it can be shown that \(f^{*}\) satisfies
conditions (i), (ii) and~(iii) of Lemma~\ref{xtn}.

For all \(d\in D_K\) and \(w\in\mathscr{I}\) we have
\(f^*(T_d\otimes b^{0}_w)=T_d\otimes f(b^{0}_w) =T_d\otimes b_w\), and since
\(\{\,T_d\otimes b_{w}\mid d\in D_K, w\in\mathscr{I}\,\}\) is an \(\mathcal{A}\)-basis
of \(\mathscr{S}^*\), condition~(i) of Lemma~\ref{xtn} is satisfied.

For all \(h\in\mathcal{H}\) and \(\alpha\in\mathscr{S}\) we have
\[
f^{*}(\overline{h\otimes \alpha})=f^{*}(\overline{h}\otimes\overline{\alpha})
=\overline{h}\otimes f(\overline{\alpha})
=\overline{h}\otimes \overline{f(\alpha)}
=\overline{h\otimes f(\alpha)}=\overline{f^{*}(h\otimes \alpha)},
\]
whence \(f^{*}(\overline{\beta})=\overline{f^{*}(\beta)}\) for all \(\beta\in\mathscr{S}^{*}_{0}\),
and condition~(ii) of Lemma~\ref{xtn} is satisfied.

For all \(d\in D_K\) and \(w\in\mathscr{I}_{0}\setminus\mathscr{I}\) we have
\[
f^*(T_d\otimes b^{0}_w)=T_d\otimes f(b^{0}_w) =T_d\otimes \bigl(\sum\nolimits_yr_{y,w}b_y\bigr)
=\sum\nolimits_y r_{y,w}(T_d\otimes b_y)=\sum\nolimits_y r_{y,w}f^{*}(T_d\otimes b^{0}_y),
\]
where the sums extend over all \(y\in\mathscr{I}\) such that \(y<w\). Since
\(r_{y,w}\in q\mathcal{A}^+\) and \(r_{y,w}=q\) if \(wy^{-1}\in S\),
condition~(iii) of Lemma~\ref{xtn} is satisfied.\qed
\end{proof}
Let \((\mathscr{I}\!,\,J)\) be a \(W\!\)-graph ideal and
\(C=\{\,c_w\mid w\in\mathscr{I}\,\}\) the \(W\!\)-graph basis of
\(\Gamma=\Gamma(\mathscr{I}\!,\,J)\). To simplify our terminology, we shall use the
preorder \(\leqslant_\Gamma\) on \(C\) to define a preorder on~\(\mathscr{I}\!\), writing
\(x\leqslant_{\mathscr{I}} y\) if and only if \(c_x\leqslant_\Gamma c_y\), whenever \(x,\,y\in\mathscr{I}\).
In the same spirit, if \(X\subseteq\mathscr{I}\) then we shall say that \(X\)
is \((\mathscr{I}\!,\,J)\)-closed if \(\{\,c_x\mid x\in X\,\}\) is a closed subset
of~\(C\), and we shall call \(X\) a cell of \((\mathscr{I}\!,\,J)\) if
\(\{\,c_x\mid x\in X\,\}\) is a cell of~\(\Gamma\).

\begin{proposition}\label{cellsandstrongsubideals}
Suppose that \((\mathscr{I}\!,\,J)\) is a \(W\!\)-graph ideal and that \(X\) is a
cell of \((\mathscr{I}\!,\,J)\). Let
\(o(X)=\{\,y\in\mathscr{I}\mid x\leqslant_{\mathscr{I}}y\text{ for some }x\in X\,\}\),
the union of the cells \(Y\) of \((\mathscr{I}\!,\,J)\) with
\(X\leqslant_{\mathscr{I}}Y\). Then \(o(X)\) is a strong \(W\!\)-graph
subideal of \((\mathscr{I}\!,\,J)\). Moreover, if \(\mathscr{Z}\subseteq\mathscr{I}\)
then \(\mathscr{Z}\) is a strong \(W\!\)-graph subideal of \((\mathscr{I}\!,\,J)\) if
and only if it is a union of subideals of the above form.
\end{proposition}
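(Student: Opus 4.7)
The plan is to translate the statement into the equivalent language of the $W\!$-graph $\Gamma=\Gamma(\mathscr{I}\!,\,J)$ and its cell partition of $C$, using the fact (from Definition~\ref{strongwgsubideal} and the discussion of closed subsets in Section~\ref{sec:2}) that $\mathscr{Z}\subseteq\mathscr{I}$ is a strong $W\!$-graph subideal precisely when $\{\,c_w\mid w\in\mathscr{I}\setminus\mathscr{Z}\,\}$ is closed in~$C$, equivalently when $\{\,c_w\mid w\in\mathscr{Z}\,\}$ is an up-set with respect to~$\leqslant_\Gamma$ and hence a union of cells. In our notation on $\mathscr{I}$, this says: a subset of $\mathscr{I}$ is a strong $W\!$-graph subideal iff it is a union of cells of $(\mathscr{I}\!,\,J)$ that is upward closed under $\leqslant_{\mathscr{I}}$, provided we also know it is an ideal of $(W,\leqslant\lside)$.

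First I would verify that $o(X)$ is a strong $W\!$-graph subideal by appealing to Theorem~\ref{wgdetset}. That $o(X)$ is an ideal of $(W,\leqslant\lside)$ follows at once from Corollary~\ref{leftorderpreorder}: if $y\in o(X)$ and $z\in W$ satisfy $z\leqslant\lside y$, then $z\in\mathscr{I}$ (as $\mathscr{I}$ itself is such an ideal) and $c_y\leqslant_\Gamma c_z$, so $c_x\leqslant_\Gamma c_y\leqslant_\Gamma c_z$ for some $x\in X$, giving $z\in o(X)$. Next, $\{\,c_w\mid w\in\mathscr{I}\setminus o(X)\,\}$ is closed in $C$: if $w\in\mathscr{I}\setminus o(X)$ and $w'\leqslant_{\mathscr{I}}w$, then assuming $w'\in o(X)$ would yield $x\in X$ with $x\leqslant_{\mathscr{I}}w'\leqslant_{\mathscr{I}}w$, contradicting $w\notin o(X)$. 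Theorem~\ref{wgdetset} then identifies $o(X)$ as a strong $W\!$-graph subideal.

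For the ``moreover'' statement, I would argue in both directions. If $\mathscr{Z}$ is a strong $W\!$-graph subideal, then $\{\,c_w\mid w\in\mathscr{Z}\,\}$ is the complement of a closed set, hence a union of cells that is upward closed under $\leqslant_\Gamma$; so $\mathscr{Z}$ is a union of cells $X$ of $(\mathscr{I}\!,\,J)$, and for each such cell $X\subseteq\mathscr{Z}$ the upward closure forces $o(X)\subseteq\mathscr{Z}$. Thus $\mathscr{Z}=\bigcup_{X\subseteq\mathscr{Z}}o(X)$, the union over cells of $(\mathscr{I}\!,\,J)$ contained in $\mathscr{Z}$. Conversely, any union of sets of the form $o(X)$ is a union of strong subideals; since arbitrary unions of closed subsets of $C$ are closed (generalizing Proposition~\ref{closedIntUnion} to arbitrary unions, which is immediate from the definition of closed), and arbitrary unions of left weak order ideals are left weak order ideals, such a union is again a strong $W\!$-graph subideal.

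The proof is essentially bookkeeping with the three preorders $\leqslant\lside$, $\leqslant_{\mathscr{I}}$ and $\leqslant_\Gamma$, together with the characterization of strong subideals via closed subsets of~$C$. The only mildly delicate point, and the one I would be most careful with, is the transitivity argument showing that $\mathscr{I}\setminus o(X)$ corresponds to a closed subset; this rests on the transitivity of the preorder $\leqslant_{\mathscr{I}}$ together with the translation between $\leqslant_{\mathscr{I}}$ and $\leqslant_\Gamma$ that is built into the notation. No new ideas beyond the results of Sections~\ref{sec:2}--\ref{sec:4} are required.
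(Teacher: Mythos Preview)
Your proposal is correct and follows essentially the same route as the paper: both show that $o(X)$ is an ideal of $(W,\leqslant\lside)$ via the compatibility of $\leqslant\lside$ with $\leqslant_{\mathscr{I}}$, check that $\mathscr{I}\setminus o(X)$ is closed by transitivity, apply Theorem~\ref{wgdetset}, and then handle the ``moreover'' clause by upward closure of $\mathscr{Z}$ under~$\leqslant_{\mathscr{I}}$. The only cosmetic difference is that the paper re-derives Corollary~\ref{leftorderpreorder} inline (via Theorem~\ref{main1}) whereas you cite it directly.
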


\begin{proof}
Let \(\Gamma\) be the  \(W\!\)-graph \(\Gamma(\mathscr{I}\!,\,J)\).
If \(w\in\mathscr{I}\) and \(s\in\SA(w)\) then \(sw\leqslant_{\mathscr{I}}w\), since
\(\mu(sw,w)=1\) (by Theorem~\ref{main1}) and \(D(sw)\nsubseteq D(w)\).
It follows by an induction on \(l(v)-l(w)\) that if \(w,\,v \in\mathscr{I}\)
with \(w\leqslant\lside v\) then \(v\leqslant_{\mathscr{I}}w\). Hence \(o(X)\) is an
ideal of \((W,\leqslant\lside)\). Now suppose that \(z\in\mathscr{I}\setminus o(X)\)
and \(y\leqslant_{\mathscr{I}}z\). Since \(z\in\mathscr{I}\setminus o(X)\) there
is no \(x\in X\) with \(x\leqslant_{\mathscr{I}}z\), and by transitivity of \(\leqslant_{\mathscr{I}}\)
there is no \(x\in X\) with \(x\leqslant_{\mathscr{I}}y\). So
\(y\in\mathscr{I}\setminus o(X)\). Hence \(\mathscr{I}\setminus o(X)\) is
\((\mathscr{I}\!,\,J)\)-closed, and, by Theorem~\ref{wgdetset}, \(o(X)\) is a
strong \(W\!\)-graph subideal of \((\mathscr{I}\!,\,J)\).

As noted in Proposition~\ref{closedIntUnion}, any union of strong \(W\!\)-graph
subideals is a strong \(W\!\)-graph subideal. Now let \(\mathscr{Z}\) be an arbitrary strong
\(W\!\)-graph subideal of \(\mathscr{I}\!\), and suppose that \(X\) and \(Y\) are cells
of \((\mathscr{I}\!,\,J)\) with \(X\leqslant_{\mathscr{I}}Y\). Since \(\mathscr{I}\setminus\mathscr{Z}\)
is a closed set, if \(Y\subseteq(\mathscr{I}\setminus\mathscr{Z})\) then
\(X\subseteq(\mathscr{I}\setminus\mathscr{Z})\). Equivalently, if \(X\subseteq\mathscr{Z}\)
then \(Y\subseteq\mathscr{Z}\). So if \(X\subseteq\mathscr{Z}\) is a cell
then \(o(X)\subseteq\mathscr{Z}\), and it follows that \(\mathscr{Z}\) is
the union of those strong subideals \(o(X)\) that it contains.\qed
\end{proof}
Combining Theorem~\ref{indwgsubideal} and Proposition~\ref{cellsandstrongsubideals}
yields the following corollary.
\begin{corollary}\label{indcells}
Suppose that \(J\subseteq K\subseteq S\) and that \((\mathscr{I}\!,\,J)\) is a \(W_K\)-graph
ideal. If \(X\subseteq\mathscr{I}\) is a cell of \((\mathscr{I},J)\) then \(D_KX\) is a
union of cells of the induced \(W\!\)-graph ideal \((D_K\mathscr{I},J)\).
\end{corollary}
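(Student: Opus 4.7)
The plan is to express the cell $X$ as the set-theoretic difference of two nested strong $W_K$-graph subideals of $(\mathscr{I},J)$, then push everything through parabolic induction using Theorem~\ref{indwgsubideal}, and finally recover $D_KX$ as a difference of two unions of cells in the induced $W$-graph.

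First, I would show that both $o(X)$ and $o(X)\setminus X$ are strong $W_K$-graph subideals of $(\mathscr{I},J)$. That $o(X)$ is a strong subideal is Proposition~\ref{cellsandstrongsubideals}. For $o(X)\setminus X$, let $\{X_i\}$ enumerate the cells of $(\mathscr{I},J)$ that lie strictly above $X$ in the cell partial order on $\Gamma(\mathscr{I},J)$. By Proposition~\ref{cellsandstrongsubideals} each $o(X_i)$ is a strong subideal, and by transitivity (plus antisymmetry of the cell partial order) one checks that
\[
o(X)\setminus X = \bigcup_i o(X_i).
\]
Proposition~\ref{closedIntUnion} then gives that $o(X)\setminus X$ is also a strong subideal of $(\mathscr{I},J)$.

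Next, Theorem~\ref{indwgsubideal} applies to both of these strong subideals, yielding that $D_K\vtms o(X)$ and $D_K(o(X)\setminus X)$ are strong $W\!$-graph subideals of $(D_K\mathscr{I},J)$. By Definition~\ref{strongwgsubideal}, this is equivalent to saying that each of them is a union of cells of $\Gamma(D_K\mathscr{I},J)$ forming an up-set in the cell poset.

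Finally, since every element of $D_K\mathscr{I}$ has a unique factorization $du$ with $d\in D_K$ and $u\in\mathscr{I}$, the decomposition $o(X)=X\sqcup(o(X)\setminus X)$ lifts to $D_K\vtms o(X)=D_KX\sqcup D_K(o(X)\setminus X)$, so
\[
D_KX = D_K\vtms o(X)\setminus D_K(o(X)\setminus X).
\]
As a difference of two sets each of which is a union of cells of $(D_K\mathscr{I},J)$, $D_KX$ is itself a union of cells, as required. There is no serious obstacle here; the only point one has to check carefully is that $o(X)\setminus X$ really is a strong subideal, which boils down to verifying that its complement in $\mathscr{I}$ (namely $X$ together with all cells not above $X$) is a down-set of cells — a direct consequence of antisymmetry of the cell partial order.
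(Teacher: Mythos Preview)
Your argument is correct and follows essentially the same route as the paper's proof: express $X$ as $o(X)\setminus(o(X)\setminus X)$, note that both $o(X)$ and $o(X)\setminus X$ are strong $W_K$-graph subideals (the paper invokes Proposition~\ref{cellsandstrongsubideals} for both at once, while you spell out the second one as $\bigcup_i o(X_i)$), induce via Theorem~\ref{indwgsubideal}, and use unique factorisation to recover $D_KX$ as a difference of two unions of cells. The only cosmetic difference is that you give more detail on why $o(X)\setminus X$ is a strong subideal, which is implicit in the ``Moreover'' clause of Proposition~\ref{cellsandstrongsubideals}.
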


\begin{proof}
By Proposition~\ref{cellsandstrongsubideals}, the sets
\(o(X)=\{\,y\in\mathscr{I}\mid x\leqslant_{\mathscr{I}}y\text{ for some }x\in X\,\}\)
and \(o(X)\setminus{X}\) are both strong \(W_K\)-graph subideals of \(\mathscr{I}\!\).
So by Theorem~\ref{indwgsubideal} it
follows that \(D_Ko(X)\) and \(D_K(o(X)\setminus X)\) are strong
\(W\!\)-graph subideals of \((D_K\mathscr{I},J)\), and hence their complements
in \(D_K\mathscr{I}\) are unions of cells. Since
\(D_KX=D_Ko(X)\setminus D_K(o(X)\setminus X)\) we deduce that
\(D_KX\) is a union of cells.\qed
\end{proof}

\begin{remark}\label{recoverequalparameter}
Applying Corollary~\ref{indcells} in the case \((\mathscr{I}\!,\,J)=(W_K,\emptyset)\)
recovers the equal parameters case of \cite[Theorem 1]{geck:onIndcells}.
\end{remark}
Let \(\Gamma=(C,\mu,\tau)\) be the \(W\!\)-graph obtained from \(W\!\)-graph ideal
\((\mathscr{I}\!,\,J)=(W,\emptyset)\), so that \(\mathscr{S}(\mathscr{I}\!,\,J)\)
can be identified with the left regular \(\mathcal{H}\)-module, the basis
\(C=\{\,c_w\mid w\in W\,\}\) is the Kazhdan--Lusztig basis of~\(\mathcal{H}\),
and \(\tau(c_w)=\mathcal{L}(w)=\{\,s\in S\mid sw<w\,\}\), for all \(w\in W\).
Observe that every edge of \(\Gamma\) with tail \(c_1\) is superfluous, since
\(\mathcal{L}(1)=\emptyset\subseteq\mathcal{L}(w)\) for all \(w\in W\). Hence
\(W\setminus\{1\}\) is a closed set of \((W,\emptyset)\), and, since \(\{1\}\) is
an ideal of \((W,\leqslant\lside)\), it follows that \(\{1\}\) is a strong
\(W\!\)-graph subideal of~\(W\). Similarly, if \(W\) is finite and \(w_S\) is the
longest element of~\(W\), then every edge of \(\Gamma\) with head \(c_{w_S}\) is
superfluous, since \(\mathcal{L}(w)\subseteq S=\mathcal{L}(w_S)\) for all \(w\in W\).
So \(\{w_S\}\) is \((W,\emptyset)\)-closed. Since \(W\setminus\{w_S\}\) is an
ideal of \((W,\leqslant\lside)\), it follows that \(W\setminus\{w_S\}\) is strong
\(W\!\)-graph subideal of~\(W\).

Since \(\{w_S\}\) is \((W,\emptyset)\)-closed, \(\mathcal{A}c_{w_S}\) is an
\(\mathcal{H}\)-submodule of~\(\mathcal{H}\), as was already obvious from the fact
that \(T_sc_{w_S}=-q^{-1}c_{w_S}\) for all \(s\in S\) (by Theorem~\ref{main1}). Using
this it is also easy to show that \(c_{w_S}=\sum_{w\in W}(-q)^{l(w_s)-l(w)}T_w\).

Now let \(K\subseteq S\). By the above discussion, \(\{1\}\) is a strong
\(W_K\)-graph subideal of \((W_K,\emptyset)\), and so by Theorem~\ref{indwgsubideal}
it follows that \(D_K\) is a strong \(W\!\)-graph subideal of
\((D_KW_K,\emptyset)=(W,\emptyset)\). Thus \(W\setminus D_K\) is a closed
subset of \((W,\emptyset)\), whence \(W\setminus D_K\) and \(D_K\) are both unions
of left cells. Furthermore, if \(W_K\) is finite and \(w_K\) is its longest element,
then \(W_K\setminus\{w_K\}\) is a strong \(W_K\)-graph ideal of \((W_K,\emptyset)\),
and by Theorem~\ref{indwgsubideal} it follows that \(W\setminus D_Kw_K\) is a
strong \(W\!\)-graph subideal of \((W,\emptyset)\). Hence \(D_Kw_K\) is
\((W,\emptyset)\)-closed, and, in particular, \(D_Kw_K\) is a union of left
cells. (This result was proved by Geck in \cite[Lemma~2.8]{geck:klMurphy}.)

It is easily checked, using Definition~\ref{wgphdetelt}, that if \(K\) is any
subset of~\(S\) then \(({1},K)\) is a \(W_K\)-graph ideal, associated with the
one-dimensional representation \(\varepsilon\) of \(\mathcal{H}_K\) given by
\(\varepsilon(T_s)=-q^{-1}\) for all \(s\in K\). By Theorem~\ref{wgdetset} it
follows that \((D_K,K)\) is a \(W\!\)-graph ideal, associated with the
representation of \(\mathcal{H}\) induced from~\(\varepsilon\). (This corresponds
to the case \(u=-1\) in the construction given by Deodhar in~\cite{deo:paraKL}.)
In the case that \(W_K\) is finite with \(w_K\) its longest element, the
\((W_K,\emptyset)\)-closed set \(\{w_K\}\) also affords the representation
\(\varepsilon\), and the \((W,\emptyset)\)-closed set \(D_Kw_K\) also affords the
representation of \(\mathcal{H}\) induced from~\(\varepsilon\). The following
proposition confirms that the \(W\!\)-graph \(\Gamma(D_K,K)\) is isomorphic to the full
subgraph of \(\Gamma(W,\emptyset)\) spanned by the vertices corresponding to~\(D_Kw_K\).

\begin{proposition}
Let \(K\subseteq S\) with \(W_K\) finite. Let \(C=\{\,c_w\mid w\in W\,\}\)
be the \(W\!\)-graph basis of \(\mathscr{S}(W,\emptyset)\) and \(\Gamma=(C,\mu,\tau)\)
the corresponding \(W\!\)-graph, and let \(C^K=\{\,c_d^K\mid d\in D_K\,\}\) be the
\(W\!\)-graph basis of \(\mathscr{S}(D_K,K)\) and \(\Gamma^K=(C^K,\mu^K,\tau^K)\)
the corresponding \(W\!\)-graph. Define \(\varphi\colon C^K\to C\) by
\(\varphi(c_d^K)=c_{dw_K}\) for all~\(d\in D_K\), where \(w_K\) is the longest
element of~\(W_K\). Then \(\tau^K(v)=\tau(\varphi(v))\) for
all \(v\in C^K\), and \(\mu^K(u,v)=\mu(\varphi(u),\varphi(v))\) for all \(u,\,v\in C^K\).
\end{proposition}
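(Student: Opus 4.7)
The plan is to identify $\mathscr{S}(D_K,K)$ with the $\mathcal{H}$-submodule $N$ of $\mathcal{H}$ spanned by $\{c_{dw_K}:d\in D_K\}$ and to verify that this identification sends $c_d^K$ to $c_{dw_K}$; the claimed equalities for $\tau$ and $\mu$ will then follow by comparing the two instances of Theorem~\ref{main1}. Since $K\subseteq\mathcal{L}(w_K)=\tau(c_{w_K})$, Theorem~\ref{main1} gives $T_sc_{w_K}=-q^{-1}c_{w_K}$ for all $s\in K$, so $\mathcal{A}c_{w_K}$ is an $\mathcal{H}_K$-submodule of $\mathcal{H}$ affording the sign representation~$\varepsilon$. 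By the universal property of induction there is a unique $\mathcal{H}$-module homomorphism
\[
\psi\colon\mathscr{S}(D_K,K)=\mathcal{H}\otimes_{\mathcal{H}_K}\mathscr{S}(\{1\},K)\to N
\]
sending $1\otimes b_1$ to $c_{w_K}$; equivalently, $\psi(b_d^K)=T_dc_{w_K}$ for all $d\in D_K$.

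The next step is to show that $\psi$ is an isomorphism and intertwines the bar involutions. Intertwining is immediate, since both bar involutions are $\mathcal{A}$-semilinear and $\mathcal{H}$-compatible and fix the respective generators $1\otimes b_1$ and $c_{w_K}$, so $\alpha\mapsto\psi^{-1}(\overline{\psi(\alpha)})$ satisfies the defining properties of the bar involution on $\mathscr{S}(D_K,K)$ and hence coincides with it. For the isomorphism, an induction on $l(d)$ using Theorem~\ref{main1} and the closedness of $D_Kw_K$ in~$\Gamma$ shows
\[
T_dc_{w_K}=c_{dw_K}+\!\!\sum_{y\in D_K,\,y<d}\!\!r_{y,d}\,c_{yw_K}
\]
for some $r_{y,d}\in\mathcal{A}$: writing $d=sd'$ with $d'\in D_K$ and $l(d)=l(d')+1$, the identity $l(dw_K)=l(d)+l(w_K)$ gives $sd'w_K>d'w_K$, so $s\in\SA(W,d'w_K)$ in~$\Gamma$ and Theorem~\ref{main1} produces the leading coefficient~$1$; Proposition~\ref{orderPresr} (applied with $J=K$) together with closedness of $D_Kw_K$ in~$\Gamma$ ensures that all lower-order contributions lie in $\bigoplus_{y\in D_K,\,y<d}\mathcal{A}c_{yw_K}$. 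Hence $\psi$ is upper unitriangular with respect to the Bruhat order and is therefore an isomorphism.

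To prove $\psi(c_d^K)=c_{dw_K}$ I would refine the expansion above to $r_{y,d}\in q\mathcal{A}^+$. Granting this, both $\psi(c_d^K)$ and $c_{dw_K}$ are bar-invariant elements of $c_{dw_K}+q\sum_{y<d,\,y\in D_K}\mathcal{A}^+c_{yw_K}$: for $\psi(c_d^K)$ this follows by induction on $l(d)$ from pushing $b_d^K=c_d^K+q\sum_{y<d}q_{y,d}^K c_y^K$ through~$\psi$ and invoking the inductive hypothesis $\psi(c_y^K)=c_{yw_K}$, while for $c_{dw_K}$ it is the refined expansion itself. The uniqueness argument in the proof of Lemma~\ref{uniCbasis1} (a bar-invariant element of $q\sum\mathcal{A}^+c_{yw_K}$ must vanish) then forces $\psi(c_d^K)=c_{dw_K}$. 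With this in hand, the equality $\tau^K(c_d^K)=\tau(c_{dw_K})$ follows from Lemma~\ref{deo1}, which identifies $\SA(D_K,d)$ with $\{s\in S:sdw_K>dw_K\}$, and $\mu^K(c_y^K,c_d^K)=\mu(c_{yw_K},c_{dw_K})$ follows by matching coefficients in the expansions of $T_sc_{dw_K}$ provided by Theorem~\ref{main1} on either side of~$\psi$.

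The hardest step will be the refined positivity $r_{y,d}\in q\mathcal{A}^+$. In the direct inductive computation, the terms arising when $T_s$ is applied to a lower-order $c_{yw_K}$ with $s\in\mathcal{L}(yw_K)$ contribute integer constant coefficients via $q\cdot(-q^{-1})$, and these must cancel against the integer $\mu$-coefficients produced by $T_sc_{d'w_K}$. This cancellation is essentially what gives the $\mathcal{A}^+$-structure of the $q$-polynomials in Corollary~\ref{recursion}, so I expect to obtain the refined positivity in~$N$ by transporting that recursion through~$\psi$ rather than by a brute-force verification of the cancellations.
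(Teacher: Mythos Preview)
Your overall plan---identify $\mathscr{S}(D_K,K)$ with $\mathcal{H}c_{w_K}$ via $\psi(b_d^K)=T_dc_{w_K}$, check that $\psi$ intertwines the bar involutions, and then conclude $\psi(c_d^K)=c_{dw_K}$ by the uniqueness in Lemma~\ref{uniCbasis1}---is exactly the paper's strategy. The $\tau$-equality via Lemma~\ref{deo1} is also fine. The divergence is in how you verify the triangularity condition needed for uniqueness, and this is where your proof has a genuine gap.

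You choose to expand the standard basis element $T_dc_{w_K}$ in the basis $\{c_{yw_K}\}$ and then need $r_{y,d}\in q\mathcal{A}^+$. You correctly identify the obstruction: applying $T_s$ to $c_{d'w_K}$ produces integer $\mu$-terms, and applying $T_s$ to a lower $c_{yw_K}$ with $s\in\mathcal{L}(yw_K)$ turns a coefficient in $q\mathcal{A}^+$ into one with a possible constant term. Your proposed fix, ``transporting the recursion through $\psi$'', is circular: pushing $b_d^K=c_d^K+q\sum q_{y,d}^K c_y^K$ through $\psi$ gives $T_dc_{w_K}=\psi(c_d^K)+q\sum q_{y,d}^K\psi(c_y^K)$, and you cannot use this to compare with $c_{dw_K}+\sum r_{y,d}c_{yw_K}$ until you already know $\psi(c_d^K)=c_{dw_K}$. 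Bar-invariance alone does not rescue you, since the difference $\psi(c_d^K)-c_{dw_K}$ would only be known to lie in $\sum\mathbb{Z}[q+q^{-1}]\,c_{yw_K}$, not in $\sum q\mathcal{A}^+c_{yw_K}$.

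The paper avoids this difficulty by working in the opposite direction: it expands $c_{dw_K}$ in the \emph{standard} basis $\{T_ec_{w_K}\}$ rather than expanding $T_dc_{w_K}$ in the $\{c_{yw_K}\}$ basis. Writing $c_{dw_K}=T_{dw_K}-q\sum_{v<dw_K}p_{v,dw_K}T_v$ with the ordinary Kazhdan--Lusztig polynomials $p_{v,dw_K}\in\mathcal{A}^+$, and grouping the $T_v$'s by left $W_K$-cosets via $\mathcal{H}=\bigoplus_{e\in D_K}T_e\mathcal{H}_K$, one uses that $c_{dw_K}\in\mathcal{H}c_{w_K}$ and that each $T_e\mathcal{H}_Kc_{w_K}$ is one-dimensional to see that the $T_e\mathcal{H}_K$-component of $c_{dw_K}$ is a scalar multiple of $T_ec_{w_K}$; reading off the $T_{ew_K}$-coefficient identifies that scalar as $-qp_{ew_K,dw_K}\in q\mathcal{A}^+$ for free. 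Thus $c_{dw_K}=T_dc_{w_K}-q\sum_{e<d}p_{ew_K,dw_K}\,T_ec_{w_K}$, which has exactly the form required by Lemma~\ref{uniCbasis1}, and uniqueness gives $c_d^K=c_{dw_K}$ and $p_{e,d}^K=p_{ew_K,dw_K}$ (hence $\mu^K=\mu$ on $D_Kw_K$) in one stroke. I recommend you replace your inductive $c$-basis computation by this coset-decomposition argument; it eliminates the positivity step you could not complete.
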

\begin{proof}
As above, we identify \(\mathscr{S}(W,\emptyset)\) with~\(\mathcal{H}\).
Since the set \(D_Kw_K\) is \((W,\emptyset)\)-closed, the \(\mathcal{A}\)-submodule
of \(\mathcal{H}\) spanned by \(\{\,c_w\mid w\in D_Kw_K\,\}\) is an
\(\mathcal{H}\)-submodule. It clearly coincides with the left ideal
\(\mathcal{H}c_{w_K}=\bigoplus_{d\in D_K}T_d\mathcal{H}_Kc_{w_K}\). Here each
summand has dimension~1.

The module \(\mathscr{S}(D_K,K)\) can be identified with \(\mathcal{H}c_{w_K}\),
with \(\{\,T_dc_{w_K}\mid d\in D_K\,\}\) as the standard basis, since the bar involution
on \(\mathcal{H}\) fixes \(T_1c_{w_K}\), and for all \(s\in S\) and \(d\in D_K\),
\[
T_{s}T_dc_{w_K} =
\begin{cases}
  T_{sd}c_{w_K}  & \text{if \(sd \in D_K\) and \(sd>d\),}\\
  T_{sd}c_{w_K} + (q - q^{-1})T_{d}c_{w_K} & \text{if \(sd<d\),}\\
  -q^{-1}T_{d}c_{w_K} & \text{if \(sd=dt\) for some \(t\in K\),}
  \end{cases}
\]
in accordance with the requirements of Definition~\ref{wgphdetelt}.
The first of the three cases corresponds to \(s\in\SA(D_K,d)\), the second to
\(s\in\SD(D_K,d)\), the third to \(s\in\WD_K(D_K,d)\). It is immediate from
the definition that \(\WA_K(D_K,d)\) is always empty.

Note that if \(d\in D_K\) and \(s\in S\) then \(sdw_K<dw_K\) if and only if either \(sd<d\)
or \(sd=dt\) for some \(t\in K\). Since \(\tau^K(c_d)=\SD(D_K,d)\cup\WD_K(D_K,d)\)
and \(\tau(\varphi(c_d))=\tau(c_{dw_K})=\mathcal{L}(dw_K)\), this establishes the first
assertion of the proposition.

It follows from Lemma~\ref{uniCbasis1} that the \(W\!\)-graph basis and standard basis
of \(\mathscr{S}(D_K,K)\) are related by
\begin{equation}\label{stdbas}
c^K_d=T_dc_{w_K}-q\sum_{e<d}p_{e,d}^KT_ec_{w_K}\qquad\text{for all \(d\in D_K\),}
\end{equation}
for some \(p_{e,d}^K\in\mathcal{A}^+\!\!\). Moreover, the \(W\!\)-graph basis is the
only basis of bar-invariant elements satisfying such a system of equations. Similarly,
in \(\mathscr{S}(W,\emptyset)\) we have
\[
c_w=T_w-q\sum_{y<w}p_{y,w}T_v\qquad\text{for all \(w\in D_K\),}
\]
for some \(p_{y,w}\in\mathcal{A}^+\!\!\). We apply this with \(w=dw_K\), where \(d\in D_K\),
and group the terms on the right hand side according to cosets of~\(W_K\), thus obtaining
the components of \(c_{dw_K}\) in the direct sum decomposition
\(\mathcal{H}=\bigoplus_{e\in D_K}T_e\mathcal{H}_K\). We find that
\begin{equation}\label{components}
c_{dw_K}=T_d\bigl(T_{w_K}-q\sum_{v<w_K}p_{dv,dw_K}T_v\bigr)
-q\!\!\sum_{e\in D_K,\,e<d}\!T_e\bigl(\sum_{v\in W_K}p_{ev,dw_K}T_v\bigr).
\end{equation}
Since \(c_{dw_K}\in\mathcal{H}c_{w_K}\) its component in each summand
\(T_e\mathcal{H}_K\) must lie in the one-dimensional subspace \(T_e\mathcal{H}_Kc_{w_S}\).
So it follows that \(T_{w_K}-q\sum_{v<w_K}p_{dv,dw_K}T_v\) and each
\(\sum_{v\in W_K}p_{ev,dw_K}T_v\) in Eq.~(\ref{components}) must be scalar multiples
of~\(c_{w_K}=T_{w_K}-q\sum_{v<w_K}(-q)^{l(w_s)-l(v)-1}T_v\). So
\[
c_{dw_K}=T_dc_{w_K}-q\sum_{e<d}p_{ew_K,dw_K}T_ec_{w_K}.
\]
Comparing this with Eq.~(\ref{stdbas}), uniqueness tells us that \(c^K_d=c_{dw_K}\)
for all \(d\in D_K\), and that \(p_{e,d}^K=p_{ew_K,dw_K}\) for all \(e,\,d\in D_K\).
Since \(\mu^K(c_e,c_d)\) is
the constant term of~\(p_{e,d}^K\) and \(\mu(c_{ew_K},c_{dw_K})\) is the constant
term of~\(p_{ew_K,dw_K}\), this establishes the other assertion of the proposition.
\qed
\end{proof}

\begin{remark} The equation \(p_{e,d}^K=p_{ew_K,dw_K}\), which is the key part of the
above proof, is due to Deodhar \cite[Proposition 3.4]{deo:paraKL}. The proof also
shows that \(p_{ev,dw_K}=q^{\smash{l(w_K)-l(v)}}p_{ew_K,dw_K}\) whenever
\(e,\,d\in D_K\) and \(v\in W_K\), a fact that was already known.
\end{remark}

\section{\textit{W-}graph biideals}
\label{sec:extra}

It is clear from the defining presentation that the Hecke algebra
\(\mathcal{H}\) possesses an involutive antiautomorphism \(h\mapsto h^\flat\)
that fixes each element of the generating set \(\{\,T_s\mid s\in S\,\}\). 
This can be used to convert left
\(\mathcal{H}\)-modules into right \(\mathcal{H}\)-modules, and vice versa.
The corresponding antiautomorphism of~\(W\), given by \(w\mapsto w^{-1}\),
maps ideals of \((W,\leqslant\lside)\) to ideals of \((W,\leqslant\rside)\), and vice versa.
Since, moreover, 
\(
(\overline{T_s})^\flat = (T_s^{-1})^\flat = (T_s^\flat)^{-1} = T_s^{-1}
= \overline{T_s} = (\overline{T_s^\flat})
\)
for all \(s\in S\), it follows that \(\overline{h^\flat} = (\overline h)^\flat\)
for all \(h\in H\). So
there is a theory of \(W\!\)-graph right ideals that is completely parallel
to the theory of \(W\!\)-graph (left) ideals as presented above, with
\((W,\leqslant\rside)\) replacing \((W,\leqslant\lside)\) and right \(\mathcal{H}\)-modules
replacing left \(\mathcal{H}\)-modules. Just as \(W\!\)-graph ideals give rise
to \(W\!\)-graphs, so \(W\!\)-graph right ideals give rise
to \(W\opp\!\)-graphs. If \(\mathscr{I}\subseteq W\)
and \(K\subseteq S\) then \((\mathscr{I}\!,\,K)\) is a \(W\!\)-graph right
ideal if and only if \((\mathscr{I}^{-1}\!,\,K)\) is a \(W\!\)-graph ideal.

If \((\mathscr{I}\!,\,K)\) is a \(W\!\)-graph right ideal we write
\(\mathscr{S}\opp(\mathscr{I}\!,\,K)\) for the associated right
\(\mathcal{H}\)-module, \({B\vtms}\opp=\{\,b_w\opp\mid w\in\mathscr{I}\,\}\) for its
standard basis and \({C\vtms}\opp=\{\,c_w\opp\mid w\in\mathscr{I}\,\}\) for its
\(W\opp\!\)-graph basis. The module \(\mathscr{S}\opp(\mathscr{I}\!,\,K)\) admits an
\(\mathcal{A}\)-semilinear involution \(\alpha\mapsto\underline{\alpha}\)
such that \(\underline{\alpha h}=\underline{\alpha}\overline{h}\) for all
\(h\in\mathcal{H}\) and \(\alpha\in\mathscr{S}\opp(\mathscr{I}\!,\,K)\)
and \(\underline{c_w\opp}=c_w\opp\) for all \(w\in\mathscr{I}\!\).
Moreover, as in Lemma~\ref{uniCbasis1}, the \(c_w\opp\) are uniquely
determined by the requirements that \(\underline{c_w\opp}=c_w\opp\)
and \(b_w\opp=c_w\opp+q\sum_{y<w}q_{y,w}\opp c_y\opp\) for some
\(q_{y,w}\opp\in\mathcal{A}^+\!\). We write \(\mu_{y,w}\opp\) for the constant
term of the polynomial \(q_{y,w}\opp\).

\begin{remark}\label{right-left}
If \((\mathscr{I}\!,\,K)\) is a \(W\!\)-graph right ideal then the module
\(\mathscr{S}\opp(\mathscr{I}\!,\,K)\) can be identified with
\(\mathscr{S}(\mathscr{I}^{-1}\!,\,K)\), made into a right module by defining 
\(\alpha h = h^\flat\alpha\) for all \(\alpha\in\mathscr{S}(\mathscr{I}^{-1}\!,\,K)\)
and \(h\in\mathcal{H}\). With this convention, \(b_w\opp=b_{w^{-1}}\),
and Eq.~\eqref{S_0action} says
that for all \(w\in\mathscr{I}\) and \(s\in S\),
\begin{equation}\label{S-1Action}
b_{w}\opp T_{s} =
\begin{cases}
  b_{ws}\opp  & \text{if \(s\in\SA(w^{-1},\mathscr{I}^{-1})\),}\\
  b_{ws}\opp + (q - q^{-1})b_{w}\opp & \text{if \(s\in\SD(w^{-1},\mathscr{I}^{-1})\),}\\
  -q^{-1}b_{w}\opp & \text{if \(s\in\WD_K(w^{-1},\mathscr{I}^{-1})\),}\\
  qb_{w}\opp - \sum_{y \in \mathscr I\!\!,\,\, y < ws} r^s_{y^{-1},w^{-1}}b_{y}\opp &
  \text{if \(s\in\WA_K(w^{-1},\mathscr{I}^{-1})\),}
\end{cases}
\end{equation}
where the coefficients \(r^s_{\smash{y^{-1},w^{-1}}}\)
lie in \(q\mathcal{A}^{+}\). Note that the first of these four cases corresponds to 
\(w<ws\in\mathscr{I}\), the second to \(w>ws\), the third to \(ws\notin D_K^{-1}\!\),
and the last to \(ws \in D_K^{-1}\setminus\mathscr{I}\).
\end{remark}

\begin{remark}\label{JandK}
It is conceivably possible for some \(\mathscr{I}\subseteq W\) to be simultaneously
a \(W\!\)-graph ideal with respect to~\(J\) and a \(W\!\)-graph right ideal with
respect to~\(K\), where \(J,\,K\subseteq S\). However, if this happens then
\(\mathscr{I}\) must be contained in the standard parabolic subgroup generated
by the complement of \(J\cup K\) in~\(S\). To see this, observe that since
\(\mathscr{I}\) is both an ideal of \((W,\leqslant\lside)\) and an ideal of \((W,\leqslant\rside)\),
if \(w\in \mathscr{I}\) and \(u\in W\) has the property that there exist
\(x,\,y\in W\) with \(w=xuy\) and \(l(w)=l(x)+l(u)+l(y)\), then \(u\in\mathscr{I}\!\).
In particular, if \(s\in S\) occurs in any reduced expression for any
\(w\in\mathscr{I}\) then \(s\in\mathscr{I}\!\), whence \(s\notin J\cup K\)
(since \(\mathscr{I}\subseteq D_J\cap D_K^{-1}\)). Of course this will automatically
hold if \(J=K=\emptyset\).
\end{remark}
If it is the case that \((\mathscr{I},J)\) is a \(W\!\)-graph ideal and \((\mathscr{I},K)\)
is a \(W\!\)-graph right ideal then there is an \(\mathcal{A}\)-isomorphism
from the left \(\mathcal{H}\)-module \(\mathscr{S}(\mathscr{I}\!,\,J)\) to the
right \(\mathcal{H}\)-module \(\mathscr{S}\opp(\mathscr{I}\!,\,K)\) mapping the standard
basis of \(\mathscr{S}(\mathscr{I}\!,\,J)\) to the standard basis of
\(\mathscr{S}\opp(\mathscr{I}\!,\,K)\). It is therefore natural to ask
whether it is possible to obtain an \((\mathcal{H},\mathcal{H})\)-bimodule by
identifying \(b_w\opp\) with \(b_w\) for all~\(w\in\mathscr{I}\).
Accordingly, we make the following definition.

\begin{definition}\label{wgbiideal}
Let \(\mathscr{I}\subseteq W\) and \(J,\,K\subseteq S\), and suppose that
\((\mathscr{I}\!,\,J)\) is a \(W\!\)-graph ideal and \((\mathscr{I}\!,\,K)\) is a
\(W\!\)-graph right ideal. Identify \(\mathscr{S}\opp(\mathscr{I}\!,\,K)\) with
\(\mathscr{S}(\mathscr{I}\!,\,J)\) by putting \(b_w\opp = b_w\) for
all~\(w\in\mathscr{I}\).
We say that \(\mathscr{I}\) is a \textit{\(W\!\)-graph biideal
with respect to \(J\) and \(K\)} (or that \((\mathscr{I}\!,\,J,K)\) is a \(W\!\)-graph biideal)
if \(\mathscr{S}=\mathscr{S}(\mathscr{I}\!,\,J)=\mathscr{S}\opp(\mathscr{I}\!,\,K)\) is an
\((\mathcal{H},\mathcal{H})\)-bimodule with the left and right
\(\mathcal{H}\)-actions defined in Eq.~\eqref{S_0action} and Eq.~\eqref{S-1Action}.
\end{definition}

\begin{notation}
When \((\mathscr{I}\!,\,J,K)\) is a \(W\!\)-graph biideal the
\((\mathcal{H},\mathcal{H})\)-bimodule
\(\mathscr{S}(\mathscr{I}\!,\,J)=\mathscr{S}\opp(\mathscr{I}\!,\,K)\)
will be denoted by \(\mathscr{S}(\mathscr{I}\!,\,J,K)\).
\end{notation}
Suppose now that \((\mathscr{I}\!,\,J)\) is simultaneously a \(W\!\)-graph ideal and
a \(W\!\)-graph right ideal, and that
\(\mathscr{S}=\mathscr{S}(\mathscr{I}\!,\,J)=\mathscr{S}\opp(\mathscr{I}\!,\,J)\)
with \(b_w\opp=b_w\) for all \(w\in\mathscr{I}\!\). By Remark~\ref{Rmk:T_wb_1} and
its analogue for the right action, we see that \(T_wb_1 = b_w = b_1T_w\) for
all \(w\in\mathscr I\). The following result shows that \((\mathscr{I}\!,\,J,J)\)
is a \(W\!\)-graph biideal if and only if \(T_wb_{1} = b_{1}T_w\) for all \(w\in W\). 

\begin{lemma}\label{symmetryBasis}
With the assumptions of the above preamble, \(\mathscr{S}\) is an
\((\mathcal{H},\mathcal{H})\)-bimodule if and only if \(hb_{1} = b_{1}h\) for all
\(h \in \mathcal{H}\).
\end{lemma}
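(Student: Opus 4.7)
The argument falls into two implications, preceded by a preliminary reduction.

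\emph{Preliminary step.} Before addressing either direction I would verify that $T_sb_1=b_1T_s$ for every $s\in S$, using no bimodule hypothesis, by a case analysis on where $s$ lies. If $s\in\mathscr{I}$, this is already noted in the preamble (both sides equal $b_s$). If $s\in J$, then $s\in\WD_J(\mathscr{I},1)$ on both the left and the right (using $J=K$ and the fact that $\WD(1)=J$), so both sides equal $-q^{-1}b_1$. If $s\in S\setminus(J\cup\mathscr{I})$, then $s\in\WA_J(\mathscr{I},1)$ on both sides; the corrective sum in \eqref{S_0action} ranges over $y\in\mathscr{I}$ with $y<s$, but the only such $y$ is $y=1$, and $r^s_{1,1}=0$ by Remark~\ref{Rmk:r^s_{w,w}}, so the sum collapses and both sides equal $qb_1$.

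\emph{Bimodule $\Rightarrow hb_1=b_1h$.} I would induct on $l(w)$ to show that $T_wb_1=b_1T_w$ for every $w\in W$, then conclude by $\mathcal{A}$-linearity. For $l(w)=0$ the statement reduces to $b_1=b_1$. For $l(w)\geqslant 1$, pick an $s\in S$ with $l(sw)<l(w)$ and write $w'=sw$, so $T_w=T_sT_{w'}$ with $l(w')<l(w)$; then
\[
T_wb_1 = T_s(T_{w'}b_1) = T_s(b_1T_{w'}) = (T_sb_1)T_{w'} = (b_1T_s)T_{w'} = b_1T_w,
\]
using the inductive hypothesis at the second equality, the bimodule compatibility at the third, and the preliminary step at the fourth.

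\emph{$hb_1=b_1h\Rightarrow$ bimodule.} Remark~\ref{Rmk:T_wb_1} (and the hypothesis itself) gives $\mathscr{S}=\mathcal{H}b_1$, so any $m\in\mathscr{S}$ can be written as $m=h_3b_1=b_1h_3$ for some $h_3\in\mathcal{H}$. For arbitrary $h_1,h_2\in\mathcal{H}$ the two expressions $(h_1m)h_2$ and $h_1(mh_2)$ both reduce to $(h_1h_3h_2)b_1$ by associativity of each one-sided action together with the hypothesis applied to $h=h_1h_3$ and to $h=h_3h_2$, establishing bimodule compatibility.

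\emph{Main obstacle.} The only step that demands genuine bookkeeping is the preliminary case analysis for $T_sb_1=b_1T_s$, especially verifying that the $\WA$ correction term vanishes; once this is settled, both implications reduce to short symbolic manipulations.
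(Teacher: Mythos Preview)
Your proposal is correct and follows essentially the same route as the paper: both directions rest on the identity $T_sb_1=b_1T_s$, which is checked by the same three-way case split (your cases $s\in\mathscr{I}$, $s\in J$, $s\in S\setminus(J\cup\mathscr{I})$ coincide with the paper's $s\in\mathscr{I}$, $s\notin D_J$, $s\in D_J\setminus\mathscr{I}$), followed by an induction on $l(w)$ in one direction and an associativity chase using $\mathscr{S}=\mathcal{H}b_1$ in the other. You are actually more explicit than the paper in the $\WA$ case, invoking Remark~\ref{Rmk:r^s_{w,w}} to see that the correction term $r^s_{1,1}$ vanishes, whereas the paper simply records $T_sb_1=qb_1$ without comment.
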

\begin{proof}
Suppose first that \(hb_{1} = b_{1}h\) for all \(h \in \mathcal{H}\). Then for all
\(h,\,g\in \mathcal{H}\), we have
\begin{equation}\label{bimoduleeq1}
(hb_{1})g = (b_{1}h)g = b_{1}(hg) = (hg)b_{1} = h(gb_{1}) = h(b_{1}g).
\end{equation}
Now let \(w\) be an arbitrary element of \(\mathscr{I}\). By Remark~\ref{Rmk:T_wb_1}
we have \(b_w=T_wb_1\), and so it follows from Eq.~(\ref{bimoduleeq1}) that for all
\(h,\,g\in \mathcal{H}\),
\[
h(b_wg)=h((T_wb_1)g)=h(T_w(b_1g))=(hT_w)(b_1g)=((hT_w)b_1)g=(h(T_wb_1))g=(hb_w)g.
\]
Since \(\{\,b_w\mid w\in\mathscr{I}\,\}\) spans \(\mathscr{S}\) it follows from this that
\(h(\alpha g)=(h\alpha)g\) for all \(h,\,g\in \mathcal{H}\) and \(\alpha\in\mathscr{S}\),
whence \(\mathscr{S}\) is an \((\mathcal{H},\mathcal{H})\)-bimodule, as required.

Conversely, suppose that \(\mathscr{S}\) is a \((\mathcal{H},\mathcal{H})\)-bimodule.
We must show that \(hb_1=b_1h\) for all \(h \in \mathcal{H}\), and since 
\(\{\,T_w\mid w\in W\,\}\) spans \(\mathcal{H}\) it suffices to show that
\(T_{w}b_{1} = b_{1}T_{w}\) for all \(w\in W\). We use induction on \(l(w)\)
to do this. The case \(l(w)=0\) is trivial. For the inductive step, suppose that
\(l(w) > 0\) and write \(w = sv\) with \(s \in S\) and \(l(v) = l(w) - 1\).
By Eq.~\eqref{S_0action} we find that
\begin{equation}\label{Tsonb1}
T_{s}b_{1} = \begin{cases}
               b_{s}        \quad &\text{if \(s \in \mathscr{I}\),}\\
               -q^{-1}b_{1} \quad &\text{if \(s \notin D_J\),}\\
               qb_{1}       \quad &\text{if \(s \in D_J\setminus\mathscr{I}\),}
             \end{cases}
\end{equation}
and by Eq.~\eqref{S-1Action} it follows that \(b_{1}T_{s}=T_{s}b_{1}\) (since
\(s\notin D_J^{-1}\) if and only if \(s\notin D_J\), as \(s=s^{-1}\)). Hence, by the
inductive hypothesis and the assumption that \(\mathscr{S}\) is a bimodule, it follows that
\begin{equation*}
T_{w}b_{1} = (T_{s}T_{v})b_{1} = T_{s}(T_{v}b_{1}) = T_{s}(b_{1}T_{v})
=(T_{s}b_{1})T_{v} = (b_{1}T_{s})T_{v} = b_{1}(T_{s}T_{v}) = b_{1}T_{w}
\end{equation*}
as required.
\qed
\end{proof}
If \((\mathscr{I}\!,\,J,K)\) is a \(W\!\)-graph biideal then the bimodule
\(\mathscr{S}(\mathscr{I}\!,\,J,K)=\mathscr{S}(\mathscr{I}\!,\,J)=\mathscr{S}\opp(\mathscr{I}\!,\,K)\)
possesses a \(W\!\)-graph basis \(C=\{\,c_w\mid w\in\mathscr{I}\,\}\) and a \(W\opp\!\)-graph
basis \({C\,}\opp=\{\,c_w\opp\mid w\in\mathscr{I}\,\}\). By Lemma~\ref{uniCbasis1} the
\(c_w\) are characterized by the properties that \(\overline{c_{w}} = c_{w}\) and
\(b_{w} = c_{w} + q\sum_{y < w} q_{y,w}c_{y}\)
for some \(q_{y,w} \in \mathcal{A}^{+}\), and similarly the \(c_w\opp\)
are characterized by the properties that \(\underline{c_{w}\opp} = c_{w}\opp\) and
\(b_{w} = c_{w}\opp + q\sum_{y < w} q_{y,w}\opp c_{y}\opp\) for some
\(q_{y,w}\opp \in \mathcal{A}^{+}\). It follows that if
\(\underline{\alpha}=\overline{\alpha}\) for all \(\alpha\in\mathscr{S}(\mathscr{I}\!,\,J,K)\)
then  the \(W\!\)-graph basis \(C\) and the \(W\opp\!\)-graph basis \({C\vtms}\opp\) coincide.

\begin{proposition}\label{involutionsequal}
If \((\mathscr{I}\!,\,J,K)\) is a \(W\!\)-graph biideal then
\(\underline{\alpha}=\overline{\alpha}\) for all \(\alpha\in\mathscr{S}(\mathscr{I}\!,\,J,K)\).
\end{proposition}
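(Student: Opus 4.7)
The plan is to verify the equality on a spanning set and then extend by $\mathcal{A}$-semilinearity. Since both $\overline{\cdot}$ and $\underline{\cdot}$ are $\mathcal{A}$-semilinear involutions on $\mathscr{S}=\mathscr{S}(\mathscr{I},J,K)$, it suffices to show that $\overline{b_w}=\underline{b_w}$ for every $w\in\mathscr{I}$ (the standard basis spans $\mathscr{S}$ as a free $\mathcal{A}$-module, and two semilinear maps that agree on a free basis are equal).

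The first step is to handle the case $w=1$. By Lemma~\ref{uniCbasis1} the sum $\sum_{y<w}q_{y,w}c_y$ is empty when $w=1$, so $b_1=c_1$; the analogous right-module statement gives $b_1^{\mathrm{o}}=c_1^{\mathrm{o}}$. Under the identification $b_1^{\mathrm{o}}=b_1$ and using $\overline{c_1}=c_1$ and $\underline{c_1^{\mathrm{o}}}=c_1^{\mathrm{o}}$ from the defining properties of the two bar involutions, we obtain $\overline{b_1}=b_1=\underline{b_1}$.

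For a general $w\in\mathscr{I}$, I would invoke the identity $b_w=T_wb_1=b_1T_w$ established in the preamble to Lemma~\ref{symmetryBasis} (via Remark~\ref{Rmk:T_wb_1} and its right-handed analogue). Applying the semilinearity properties of the two involutions gives
\begin{equation*}
\overline{b_w}=\overline{T_wb_1}=\overline{T_w}\,\overline{b_1}=\overline{T_w}\,b_1,\qquad
\underline{b_w}=\underline{b_1T_w}=\underline{b_1}\,\overline{T_w}=b_1\,\overline{T_w}.
\end{equation*}
So the proof reduces to showing $\overline{T_w}\,b_1=b_1\,\overline{T_w}$, which is immediate from Lemma~\ref{symmetryBasis}: because $\mathscr{S}$ is by definition an $(\mathcal{H},\mathcal{H})$-bimodule, we have $hb_1=b_1h$ for every $h\in\mathcal{H}$, and in particular for $h=\overline{T_w}$.

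There is no serious obstacle here; the proof is essentially a direct unpacking of the bimodule axiom via Lemma~\ref{symmetryBasis}. The only subtle point worth flagging is the base case $\overline{b_1}=\underline{b_1}$, which requires the observation that $b_1$ coincides with the corresponding $W\!$-graph basis element (so that the two involutions, which are a priori only characterized by fixing their respective $C$-bases, both genuinely fix $b_1$).
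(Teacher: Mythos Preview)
Your overall strategy is sound, but there is a genuine gap at the point where you invoke Lemma~\ref{symmetryBasis}. That lemma is stated and proved under the standing hypothesis that the \emph{same} subset \(J\) governs both the left and the right action (see the preamble immediately preceding it). For a general biideal \((\mathscr{I},J,K)\) with \(J\ne K\), the conclusion \(hb_{1}=b_{1}h\) for all \(h\in\mathcal{H}\) can fail outright: if \(s\in J\setminus K\) and \(s\notin\mathscr{I}\) then \(T_{s}b_{1}=-q^{-1}b_{1}\) from the left action while \(b_{1}T_{s}=qb_{1}\) from the right action. So you cannot simply cite Lemma~\ref{symmetryBasis} to obtain \(\overline{T_{w}}\,b_{1}=b_{1}\,\overline{T_{w}}\).

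What saves the argument is that you only need this equality for \(w\in\mathscr{I}\). Because \(\mathscr{I}\) is an ideal of both \((W,\leqslant\lside)\) and \((W,\leqslant\rside)\), every simple generator \(s\) occurring in a reduced expression for such a \(w\) lies in \(\mathscr{I}\) (cf.\ Remark~\ref{JandK}); for these \(s\) one genuinely has \(T_{s}b_{1}=b_{s}=b_{1}T_{s}\), and then the bimodule axiom plus an induction on \(l(w)\) gives \(T_{w}b_{1}=b_{1}T_{w}\) and hence \(\overline{T_{w}}\,b_{1}=b_{1}\,\overline{T_{w}}\). The paper's proof makes exactly this observation explicit: writing \(w=sv\) with \(s\in S\) and \(l(v)=l(w)-1\), it notes that \(s\in\mathscr{I}\) and uses \(\overline{T_{s}}\,b_{1}=b_{1}\,\overline{T_{s}}\) together with the inductive hypothesis \(\underline{b_{v}}=\overline{b_{v}}\) inside a single chain of equalities. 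Once you supply this missing step, your argument and the paper's are essentially the same.
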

\begin{proof}
We use induction on \(l(w)\) to show that \(\underline{b_w}=\overline{b_w}\) for
all \(w\in\mathscr{I}\!\). Since the case \(l(w)=0\) is trivial, assume that \(l(w)>0\)
and let \(w=sv\) with \(s\in S\) and \(l(v)=l(w)-1\). Note that since \(\mathscr{I}\) is
an ideal of \((W,\leqslant\lside)\) and of \((W,\leqslant\rside)\), both \(v\) and \(s\) are
elements of~\(\mathscr{I}\!\). Observe that
\[
\overline{T_s}b_1=(T_s-(q-q^{-1}))b_1=b_s-(q-q^{-1})b_1=b_1(T_s-(q-q^{-1}))=b_1\overline{T_s}.
\]
Hence, by the inductive hypothesis and the fact
that \(\mathscr{S}(\mathscr{I}\!,\,J,K)\) is a bimodule, we find that
\begin{multline*}
\qquad
\underline{b_w}=\underline{b_1T_w}=b_1\overline{T_w}=b_1\overline{T_sT_v}
=b_1(\overline{T_s}\,\overline{T_v})=(b_1\overline{T_s})\overline{T_v}\\
=(\overline{T_s}b_1\!)\overline{T_v}
=\overline{T_s}(b_1\overline{T_v})=\overline{T_s}(\underline{b_1T_v})
=\overline{T_s}\underline{b_v}
=\overline{T_s}\,\overline{b_v}=\overline{T_s}(\overline{T_vb_1})\\
=\overline{T_s}(\overline{T_v}b_1\!)
=(\overline{T_s}\,\overline{T_v})b_1
=\overline{T_sT_v}b_1=\overline{T_w}b_1=\overline{T_wb_1}=\overline{b_w}
\qquad
\end{multline*}
as required.
\qed
\end{proof}
So if \((\mathscr{I}\!,\,J,K)\) is a \(W\!\)-graph biideal then it is indeed
true that \(C={C\vtms}\opp\). Moreover, we also see that \(q_{y,w}\opp=q_{y,w}\) for
all \(y,\,w\in\mathscr{I}\) with \(y<w\), and hence
\(\mu_{y,w}\opp =\mu_{y,w}\) for all \(y,\,w\in\mathscr{I}\) with \(y<w\).
It follows from this that \(\Gamma=(C,\mu,\tau)\) is a \((W\times W\opp)\)-graph,
where \(\mu\) is defined by 
\begin{equation*}
\mu(c_{y},c_{w}) = \begin{cases}
                         \mu_{y,w} &\text{if \(y < w\)}\\
                         \mu_{w,y} &\text{if \(w < y\)}\\
                         0 &\text{otherwise},
                     \end{cases}
\end{equation*}
and \(\tau\) is defined by
\(\tau(c_{w}) = \D_J(w,\mathscr{I}) \sqcup \D_K(w^{-1},\mathscr{I}^{-1})\opp\)
for all~\(w\in\mathscr{I}\!\).

\begin{theorem}\label{extra-main-wg}
If \((\mathscr{I}\!,\,J,K)\) is a \(W\!\)-graph biideal, then the triple
\(\Gamma=(C, \mu, \tau)\) defined in the above preamble is a
\((W \times W^{\mathrm o})\)-graph.
\end{theorem}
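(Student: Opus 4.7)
The plan is to recognize the $(W \times W\opp)$-graph equation~\eqref{wgraphdef} as simply the concatenation of the defining equation for the left $W$-graph structure coming from $(\mathscr{I},J)$ with the defining equation for the right $W\opp$-graph structure coming from $(\mathscr{I},K)$. These two structures live on the same set $C={C\vtms}\opp$ by Proposition~\ref{involutionsequal}, and on the same underlying $\mathcal{A}$-module $\mathscr{S}(\mathscr{I},J,K)$ by Definition~\ref{wgbiideal}. I would first observe that $\mathcal{H}(W\times W\opp)\cong \mathcal{H}(W)\otimes_{\mathcal{A}}\mathcal{H}(W\opp)$, and that a module over this tensor product is exactly an $(\mathcal{H},\mathcal{H})$-bimodule (with the right action obtained via the isomorphism $\mathcal{H}(W\opp)\cong \mathcal{H}(W)\opp$); the biideal hypothesis supplies this bimodule structure on $\mathcal{A}C$.

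Next, fix a generator $t\in S\sqcup S\opp$ of $W\times W\opp$ and a vertex $c_w\in C$, and verify that $T_tc_w$ satisfies the prescribed formula. For $t=s\in S$, the action is the left $\mathcal{H}(W)$-action on $\mathscr{S}(\mathscr{I},J)$, so Theorem~\ref{main-wg} directly yields a $W$-graph $(C,\mu',\tau')$, where $\tau'(c_w)=\D_J(\mathscr{I},w)$ and $\mu'$ is built from the polynomials $q_{y,w}$ via Eq.~\eqref{mu-symmetric}. By the construction of $\tau$ and $\mu$ in the statement preceding the theorem, $\tau(c_w)\cap S=\tau'(c_w)$ and $\mu(c_y,c_w)=\mu'(c_y,c_w)$, so the formula of Theorem~\ref{main-wg} for $T_sc_w$ is identical to the corresponding $(W\times W\opp)$-graph formula. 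The case $t=s\opp\in S\opp$ is symmetric: it invokes the $W\opp$-graph associated to the right ideal $(\mathscr{I},K)$ and uses the three key equalities $c_w\opp=c_w$, $q\opp_{y,w}=q_{y,w}$, and $\mu\opp_{y,w}=\mu_{y,w}$, all of which come directly from Proposition~\ref{involutionsequal}.

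The essential analytic content of the theorem is therefore already contained in Proposition~\ref{involutionsequal}: that the left and right bar involutions agree, so that the left $W$-graph basis and the right $W\opp$-graph basis coincide along with their associated $\mu$- and $\tau$-data. Once this identification is in hand, the present theorem reduces to pasting the left and right $W$-graph formulas together; there is no genuine obstacle. The only care required is in bookkeeping: checking that the disjoint union $\tau(c_w)=\D_J(\mathscr{I},w)\sqcup \D_K(\mathscr{I}^{-1},w^{-1})\opp$ aligns correctly with the separate left- and right-descent data, and that the sum on the right-hand side of~\eqref{wgraphdef} for $t=s\in S$ really does only pick up those $c_y$ with $s\in\D_J(\mathscr{I},y)$, since $\tau(c_y)\cap S=\D_J(\mathscr{I},y)$ (and symmetrically for $t=s\opp\in S\opp$).
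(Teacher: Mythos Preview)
Your proposal is correct and follows essentially the same approach as the paper. The paper gives no separate proof of this theorem: it is treated as an immediate consequence of the preamble following Proposition~\ref{involutionsequal}, which establishes $C=C\opp$, $q_{y,w}\opp=q_{y,w}$, and $\mu_{y,w}\opp=\mu_{y,w}$, and then simply asserts that the combined triple $(C,\mu,\tau)$ is a $(W\times W\opp)$-graph. Your write-up is a faithful expansion of that implicit argument, making explicit the identification $\mathcal{H}(W\times W\opp)\cong\mathcal{H}(W)\otimes_{\mathcal{A}}\mathcal{H}(W\opp)$ and the bookkeeping that matches the left and right $W$-graph formulas against the single $(W\times W\opp)$-graph formula~\eqref{wgraphdef}.
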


\begin{remark}\label{regular biideal}
The work of Kazhdan and Lusztig \cite{kazlus:coxhecke} shows that \((W,\emptyset,\emptyset)\)
is a \(W\!\)-graph biideal.
\end{remark}

\begin{remark}\label{LRtoLandR}
With the notation as in Theorem~\ref{extra-main-wg}, let
\({\tau\vtms}\lside\colon C\to\powerset(S)\) and
\({\tau\vtms}\rside\colon C\to\powerset(S\opp)\) be defined by
\({\tau\vtms}\lside(c)=\tau(c)\cap S\) and \({\tau\vtms}\rside(c)=\tau(c)\cap S\opp\)
for all \(c\) in~\(C\), so that \(\Gamma\lside=(C,\mu,{\tau\vtms}\lside)\) is the
\(W\!\)-graph \(\Gamma(\mathscr{I}\!,\,J)\) and \(\Gamma\rside=(C,\mu,{\tau\vtms}\rside)\)
is the \(W\opp\!\)-graph \(\Gamma(\mathscr{I}\!,\,K)\). As in Section~\ref{sec:2}
above, the functions \(\mu\) and \(\tau\) determine a preorder \(\leqslant_\Gamma\) on~\(C\);
we call the corresponding equivalence classes the two-sided cells of~\(C\). Similarly
\(\Gamma\lside\) and \(\Gamma\rside\) yield preorders \(\leqslant_{\Gamma\lside}\) and
\(\leqslant_{\Gamma\rside}\) on~\(C\); the corresponding equivalence classes are called
the left cells and right cells of~\(C\).
\end{remark}

\begin{remark}
It is obvious from the definitions that if \((\mathscr{I}\!,\,J,K)\)
is a \(W\!\)-graph biideal then so is \((\mathscr{I}^{-1}\!,\,K,J)\). If
\(f\colon\mathscr{S}(\mathscr{I}\!,\,J,K)\to\mathscr{S}(\mathscr{I}^{-1}\!,\,K,J)\)
is the \(\mathcal{A}\)-isomorphism defined by \(f(b_w)=b_{w^{-1}}\) then
\(hf(b)=f(bh^\flat)\) and \(f(b)h=f(h^\flat b)\) for all \(b\in\mathscr{S}(\mathscr{I}\!,\,J,K)\)
and \(h\in\mathcal{H}\). Furthermore, for all \(y,\,w\in\mathscr{I}\!\), the polynomial
\(q_{y,w}\) for \((\mathscr{I}\!,\,J,K)\) equals the polynomial
\(q_{y^{\smash{-1}},w^{\smash{-1}}}\opp=q_{y^{-1},w^{-1}}\) for \((\mathscr{I}^{-1}\!,\,K,J)\).
So, in the important special case that \(\mathscr{I}=\mathscr{I}^{-1}\)
and \(J=K\), we have \(q_{y^{-1},w^{-1}}=q_{y,w}\) for all \(y,\,w\in\mathscr{I}\).
This corresponds to the well known identity \(P_{y^{-1},w^{-1}}=P_{y,w}\) for
Kazhdan--Lusztig polynomials, established in \cite[5.6]{lus:heckeuneqal}.
\end{remark}
In Definition~\ref{wgbiideal}, the requirement that \(\mathscr{S}\) is a
\((\mathcal{H},\mathcal{H})\)-bimodule is not implied by
other requirements, as the following example shows.

\begin{example}
Let \(W\) be the Weyl group of type \(A_{2}\), with \(S = \{s,t\}\). We shall show that
\((\mathscr{I}\!,\,J)=(\{1,t\},\{s\})\) is both a \(W\!\)-graph ideal and a \(W\!\)-graph
right ideal, but \((\mathscr{I}\!,\,J,J)\) is not a \(W\!\)-graph biideal.

Recall first that \(D_J=\{1,t,st\}\), and that \((D_J,J)\) is a \(W\!\)-graph ideal
(by \cite[Theorem~9.2]{howvan:wgraphDetSets}). Let \(C=\{c_1,c_t,c_{st}\}\) be the 
\(W\!\)-graph basis of the corresponding \(\mathcal{H}\)-module. Since \(s\) is a
strong descent of \(st\) and \(t\) is a weak descent of~\(st\), it follows
that \(T_sc_{st}=T_tc_{st}=-q^{-1}c_{st}\). So the set \(\{st\}\) is a
\((D_J,J)\)-closed subset of \(D_J\), and it follows by Theorem~\ref{wgdetset} that
\(\mathscr{I}\) is a (strong) \(W\!\)-graph subideal of \((D_J,J)\) (since
\(\mathscr{I}=D_J\setminus\{st\}\)). In particular, \((\mathscr{I}\!,\,J)\) is a
\(W\!\)-graph ideal. Since \(\mathscr{I}=\mathscr{I}^{-1}\) we conclude that
\((\mathscr{I}\!,\,J)\) is also a \(W\!\)-graph right ideal.

Suppose, for a contradiction, that \((\mathscr{I}\!,\,J,J)\) is a \(W\!\)-graph biideal,
and let \(\Gamma=(C,\mu,\tau)\) be the corresponding \((W\times W\opp)\)-graph, defined
as in the preamble to Theorem~\ref{extra-main-wg}. Thus \(C=\{c_1,c_t\}\) is an
\(\mathcal{A}\)-basis for \(M_\Gamma\), which is an \((\mathcal{H},\mathcal{H})\)-bimodule.
Since \(\D_J(\mathscr{I}\!,\,1)=J=\{s\}\) and \(\D_J(\mathscr{I}\!,\,t)=\{t\}\) it
follows that \(\tau(c_1)=\{s,s\opp\}\) and \(\tau(c_t)=\{t,t\opp\}\), and since it is
immediate from Corollary~\ref{recursion} that \(\mu_{1,t}=q_{1,t}=1\) we conclude that
\begin{alignat*}{5}
T_{s}c_{1} &= c_{1}T_{s} &{}&= -q^{-1}c_{1},&&& T_{t}c_{1} &= c_{1}T_{t} &{}&= qc_{1}+ c_{t},\\[-9 pt]
&&&&\text{\qquad\quad and\qquad\quad}\\[-9 pt]
T_{s}c_{t} &= c_{t}T_{s} &{}&= c_{1} + qc_{t},&&& T_{t}c_{t} &= c_{t}T_{t} &{}&= -q^{-1}c_{t}.
\end{alignat*}
The observation that
\((T_sc_1)T_t=-q^{-1}c_1T_t\ne T_s(c_1T_t)\)
gives the desired contradiction.
\end{example}

\begin{definition}\label{wgsubbiideal}
Suppose that \((\mathscr{I}\!,\,J,K)\) and \((\mathscr{I}_0,J_0,K_0)\) are \(W\!\)-graph
biideals. We say that \(\mathscr{I}\) is a \textit{\(W\!\)-graph subbiideal of
\(\mathscr{I}_0\)} if \(\mathscr{I}\subseteq\mathscr{I}_0\) and \((J,K)=(J_0,K_0)\).
\end{definition}
The following result is the biideal analogue of Theorem~\ref{wgdetset}.

\begin{theorem}\label{subbiideal}
Let \((\mathscr{I}_{0}, J,K)\) be a \(W\!\)-graph biideal with corresponding
\((W\times W\opp)\)-graph \(\Gamma=(C_0,\mu,\tau)\),
so that \(C_{0} = \{\,c^{0}_{w} \mid w \in \mathscr{I}_{0}\,\}\) is an
\(\mathcal{A}\)-basis of the bimodule \(\mathscr{S}_0=\mathscr{S}(\mathscr{I}_{0},J,K)\).
Let \(\mathscr I\subseteq\mathscr{I}_0\) be such that
\(\{\,c^{0}_{w} \mid w \in \mathscr I_{0} \setminus \mathscr I\,\} \subseteq C_{0}\)
is closed with respect to the (two-sided) preorder \(\leqslant_{\Gamma}\) on \(C_0\).
Then \((\mathscr I\!,\,J,K)\) is a \(W\!\)-graph biideal, and the
\((W\times W\opp)\)-graph \(\Gamma(\mathscr I\!,\,J,K)\) is isomorphic to the full
subgraph of \(\Gamma\) on the vertex set \(\{\,c_w^0\mid w\in\mathscr I\,\}\subseteq C_0\),
with \(\mu\) and \(\tau\) functions inherited from \(\Gamma\).
\end{theorem}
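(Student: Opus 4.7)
The plan is to reduce to Theorem~\ref{wgdetset} applied separately on the left and on the right, and then glue the two quotients into a bimodule. Write $\Gamma\lside=(C_0,\mu,{\tau\vtms}\lside)$ and $\Gamma\rside=(C_0,\mu,{\tau\vtms}\rside)$ for the $W\!$-graph $\Gamma(\mathscr{I}_0,J)$ and $W\opp\!$-graph $\Gamma(\mathscr{I}_0,K)$ of Remark~\ref{LRtoLandR}. The first step is to note that since ${\tau\vtms}\lside(v)=\tau(v)\cap S$ and ${\tau\vtms}\rside(v)=\tau(v)\cap S\opp$, any edge of $\Gamma\lside$ or of $\Gamma\rside$ that is non-superfluous there is automatically non-superfluous in $\Gamma$; hence $u\leqslant_{\Gamma\lside}v$ and $u\leqslant_{\Gamma\rside}v$ each imply $u\leqslant_\Gamma v$. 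Consequently the hypothesis that $\{c^0_w\mid w\in\mathscr{I}_0\setminus\mathscr{I}\}$ is $\Gamma$-closed forces it to be both $\Gamma\lside$-closed and $\Gamma\rside$-closed. Theorem~\ref{wgdetset} applied to $(\mathscr{I}_0,J)$, together with its right-ideal analogue applied to $(\mathscr{I}_0,K)$, then show that $(\mathscr{I}\!,\,J)$ is a strong $W\!$-graph subideal of $(\mathscr{I}_0,J)$ and $(\mathscr{I}\!,\,K)$ is a strong $W\!$-graph right subideal of $(\mathscr{I}_0,K)$.

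Next, let $\mathscr{S}'\subseteq\mathscr{S}_0=\mathscr{S}(\mathscr{I}_0,J,K)$ be the $\mathcal{A}$-span of $\{c^0_w\mid w\in\mathscr{I}_0\setminus\mathscr{I}\}$. By the two applications of Theorem~\ref{wgdetset}, $\mathscr{S}'$ is simultaneously a left $\mathcal{H}$-submodule and a right $\mathcal{H}$-submodule of $\mathscr{S}_0$, hence an $(\mathcal{H},\mathcal{H})$-subbimodule, so the quotient $\mathscr{S}_0/\mathscr{S}'$ inherits a bimodule structure. By Remark~\ref{characterization} the canonical surjection $f\colon\mathscr{S}_0\to\mathscr{S}_0/\mathscr{S}'$ identifies this quotient as a left $\mathcal{H}$-module with $\mathscr{S}(\mathscr{I}\!,\,J)$, sending $b^0_w\mapsto b_w$, while the right-sided analogue identifies the same quotient as a right $\mathcal{H}$-module with $\mathscr{S}\opp(\mathscr{I}\!,\,K)$, sending $b^0_w\mapsto b_w\opp$. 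Consequently $b_w=b_w\opp$ inside $\mathscr{S}_0/\mathscr{S}'$, which is precisely the compatibility demanded by Definition~\ref{wgbiideal}, so $(\mathscr{I}\!,\,J,K)$ is a $W\!$-graph biideal with $\mathscr{S}(\mathscr{I}\!,\,J,K)=\mathscr{S}_0/\mathscr{S}'$.

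Finally, the isomorphism of the $(W\times W\opp)$-graph $\Gamma(\mathscr{I}\!,\,J,K)$ with the full subgraph of $\Gamma$ on $\{c^0_w\mid w\in\mathscr{I}\}$ follows by applying Remark~\ref{mu-and-tau} on each side: the recursion of Corollary~\ref{recursion} is internal to the subideal, forcing $q_{y,w}=q^0_{y,w}$ for $y,w\in\mathscr{I}$ with $y<w$, and the $\tau$-values are inherited because $\SD(\mathscr{I}\!,\,w)=\SD(\mathscr{I}_0,w)$ and $\WD_J(\mathscr{I}\!,\,w)=\WD_J(\mathscr{I}_0,w)$ for $w\in\mathscr{I}$, with the analogous identities on the right. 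The main obstacle I foresee is exactly the compatibility check in the middle paragraph: it must be verified that the left-module and right-module quotient identifications really send $b^0_w$ to the same element of $\mathscr{S}_0/\mathscr{S}'$. This is clean because Proposition~\ref{involutionsequal} already tells us that the bar involutions $\overline{\,\cdot\,}$ and $\underline{\,\cdot\,}$ coincide on $\mathscr{S}_0$, so the characterizations of the $c^0_w$ obtained from the two sides agree, and hence the induced standard bases in the quotient agree term-by-term.
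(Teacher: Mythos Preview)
Your proof is correct and follows essentially the same route as the paper's: use two-sided closedness to deduce left- and right-closedness, apply Theorem~\ref{wgdetset} on each side, identify both quotient modules with $\mathscr{S}_0/\mathscr{S}'$, and read off the inherited $\mu$ and $\tau$. One small simplification: the ``main obstacle'' you flag does not require Proposition~\ref{involutionsequal}, since by the biideal hypothesis on $\mathscr{I}_0$ the element $b^0_w\in\mathscr{S}_0$ is already literally the same for the left and right structures, so its image $f(b^0_w)$ automatically serves as both $b_w$ and $b_w\opp$ in the quotient.
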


\begin{proof}
Since the set \(C'=\{\,c_{w}^{0} \mid w \in \mathscr{I}_{0} \setminus \mathscr{I}\,\}\)
is closed with respect to \(\leqslant_{\Gamma}\), it follows from the theory described
in Section~\ref{sec:2} that \(\mathcal{A}C'\) is an \((\mathcal{H},\mathcal{H})\)-bimodule,
and also that \(C'\) is closed with respect to the left and right preorders
\(\leqslant_{\Gamma\lside}\) and \(\leqslant_{\Gamma\rside}\) defined as in Remark~\ref{LRtoLandR}
above. Hence it follows from Theorem~\ref{wgdetset} that \((\mathscr{I}\!,\,J)\) is a
\(W\!\)-graph ideal and also that \((\mathscr{I}\!,\,K)\) is a \(W\!\)-graph right ideal.
Moreover, by Remark~\ref{remproof} the left \(\mathcal{H}\)-module
\(\mathscr{S}(\mathscr{I}\!,\,J)\) and the right \(\mathcal{H}\)-module
\(\mathscr{S}\opp(\mathscr{I}\!,\,K)\) can both be identified with 
\(\mathscr{S}_0/\mathcal{A}C'\) (which is an \((\mathcal{H},\mathcal{H})\)-bimodule),
with the standard basis of \(\mathscr{S}(\mathscr{I}\!,\,J)\) and that of
\(\mathscr{S}\opp(\mathscr{I}\!,\,K)\) both equal to \(\{\,f(b_w^0)\mid w\in\mathscr{I}\,\}\),
where \(\{\,b_w\mid w\in\mathscr{I}_{0}\,\}\) is the standard basis of \(\mathscr{S}_0\)
and \(f\) is the natural map \(\mathscr{S}_0\to \mathscr{S}_0/\mathcal{A}C'\).
Hence \((\mathscr I\!,\,J,K)\) is a \(W\!\)-graph biideal, by Definition~\ref{wgbiideal}.
The remaining assertions follow from Theorem~\ref{wgdetset} and its right ideal analogue
applied to \((\mathscr{I}_{0}, J)\) and~\((\mathscr{I}_{0},K)\).
\qed
\end{proof}

\begin{remark}
Let \((\mathscr{I}\!,\, J,K)\) be a \(W\)-graph biideal and
\(C=\{\,c_w\mid w\in\mathscr{I}\,\}\) the \((W\times W\opp)\)-graph basis of
\(\Gamma=\Gamma(\mathscr{I}\!,\, J,K)\). In keeping with the conventions
we adopted in the preamble to Proposition~\ref{cellsandstrongsubideals} above,
we say that a subset \(X\) of \(\mathscr{I}\) is \((\mathscr{I}\!,\,J,K)\)-closed
if \(\{\,c_x\mid x\in X\,\}\) is closed with respect to the preorder \(\leqslant_{\Gamma}\),
and call \(X\) a two-sided cell of \((\mathscr{I}\!,\,J,K)\) if \(\{\,c_x\mid x\in X\,\}\)
is a cell of~\(\Gamma\). Clearly \(\leqslant_{\Gamma}\) induces a partial ordering 
on the set of two-sided cells, and \(X\subseteq\mathscr{I}\) is
\((\mathscr{I}\!,\,J,K)\)-closed if and only if it is a union of two-sided that
form an ideal with respect to this order. Theorem~\ref{subbiideal} shows that
the complement in \(\mathscr{I}\) of any such union is a \(W\!\)-graph
biideal with respect to \(J\)~and~\(K\).
\end{remark}

\section{Computational characterization of \textit{W-}graph ideals}
\label{sec:5}
Let \((W,S)\) be a Coxeter system, \(\mathscr{I}\) an ideal of \((W,\leqslant\lside)\) and
\(J\) a subset of \(\Pos(\mathscr{I})\). We know that if \((\mathscr{I},J)\) is a 
\(W\!\)-graph ideal then we can construct an \(\mathcal{H}\)-module that has an
\(\mathcal{A}\)-basis \(\{\,c_w\mid w\in\mathscr{I}\,\}\) on which the
generators of \(\mathcal{H}\) via the formulas given in Theorem~\ref{main1},
where the parameters~\(\mu_{y,w}\) are the constant terms of a family of
polynomials~\(q_{y,w}\) that can be computed recursively using the formulas
in Corollary~\ref{recursion}. In this section we prove the converse: if 
\((\mathscr{I},J)\) gives rise to an \(\mathcal{H}\)-module via this
construction then \((\mathscr{I},J)\) must be a \(W\!\)-graph ideal.

Note that if \((\mathscr{I},J)\) is not a \(W\!\)-graph ideal then the polynomials
\(q_{y,w}\) are not necessarily uniquely determined by the formulas in
Corollary~\ref{recursion}. If \(z\in\mathscr{I}\) and the \(q_{y,w}\) have been
found for all \(y,w\in\mathscr{I}\) with \(y<w<z\), then computing the
polynomials \(q_{y,z}\) involves first choosing some \(s\in\SD(z)\), so that
\(z=sw\) with \(w<z\), after which the formulas for \(q_{y,sw}\) can be applied.
A different sequence of choices of the elements \(s\in\SD(z)\) could conceivably produce
a different family of polynomials. We show that if some sequence of choices produces
polynomials that give rise to an \(\mathcal{H}\)-module then \((\mathscr{I},J)\) must
be a \(W\!\)-graph ideal. So, to be precise, our assumptions are as follows:
\begin{itemize}
\item[(A1)] \(\mathscr{I}\) is an ideal of \((W,\leqslant\lside)\) and
\(J\subseteq\Pos(\mathscr{I})\),
and \(\mathscr{S}\) is an \(\mathcal{A}\)-free \(\mathcal{H}\)-module;
\item[(A2)] \(\mathscr{S}\) has an \(\mathcal{A}\)-basis \(C=\{\,c_w\mid w\in\mathscr{I}\,\}\)
in bijective correspondence with~\(\mathscr{I}\!\), such that for certain integers~\(\mu_{y,w}\)
\[
T_sc_{w}=
\begin{cases}
-q^{-1}c_{w}&\text{if \(s\in\D(w)\),}\\
qc_{w}+\sum_{y\in \mathcal R(s,w)}\mu_{y,w}c_{y}
&\text{if \(s\in\WA(w)\),}\\
qc_{w}+c_{sw}+\sum_{y\in \mathcal R(s,w)}\mu_{y,w}c_{y} &\text{if
\(s\in\SA(w)\),}
\end{cases}
\]
where the set \(\mathcal R(s,w)\) consists of all \(y\in\mathscr{I}\) such that
\(y < w\) and \(s\in\D(y)\);
\item[(A3)] there exist polynomials \(q_{y,w}\in\mathcal{A}^+\), defined whenever
\(y,\,w\in\mathscr{I}\!\), such that \(\mu_{y,w}\) is the
constant term of \(q_{y,w}\), and \(q_{y,w}=0\) whenever \(y\not<w\);
\item[(A4)] for each \(z\in\mathscr{I}\) with \(z\ne 1\) there exists
\(s\in S\) with \(l(sz)<l(z)\) such that \(q_{sz,z}=1\), and
for all \(y\in\mathscr{I}\) with \(y<z\) we have
\begin{itemize}[itemsep=1 pt, itemindent=12 pt, topsep=1 pt]
\item[(1)] \(q_{y,z}=qq_{y,sz}\) \ if \(s\in\A(y)\),
\item[(2)]
\(q_{y,z} = -q^{-1}(q_{y,sz}-\mu_{y,sz})+q_{sy,sz}
+\sum_x\mu_{y,x}q_{x,sz}\) \ if \(s\in\SD(y)\),
\item[(3)]
\(q_{y,z} = -q^{-1}(q_{y,sz}-\mu_{y,sz})+\sum_x\mu_{y,x}q_{x,sz}\) \
if \(s\in\WD(y)\),
\end{itemize}
where the sums in (2) and (3) extend over all \(x\in \mathscr I\) such
that \(y<x<sz\) and \(s\notin\D(x)\).
\end{itemize}
The conclusion is that \((\mathscr{I},J)\) is a \(W\!\)-graph ideal.
The proof consists of showing that the module \(\mathscr{S}\) 
satisfies the conditions of Definition \ref{wgphdetelt}.

Since \(C\) is an \(\mathcal{A}\)-basis of \(\mathscr{S}\) there is an
\(\mathcal{A}\)-semilinear involution \(\alpha\mapsto\overline\alpha\) on
\(\mathscr{S}\) such that \(\overline{c_w}=c_w\) for all \(w\in\mathscr{I}\!\).
Since \(\overline{T_s-q}=T_s-q\) and \(\overline{T_s+q^{-1}}=T_s+q^{-1}\)it follows from assumption (A2) that
\(\overline{T_s}\overline{c_w}=\overline{T_s c_w}\) in each of the three cases,
and hence \(\overline{h\alpha}=\overline{h}\overline{\alpha}\) for all
\(h\in\mathcal{H}\) and \(\alpha\in\mathscr{S}\!\). The remaining task is to show that
\(\mathscr{S}\) has an \(\mathcal{A}\)-basis \(\{\,b_w\mid w\in\mathscr{I}\,\}\)
such that the formulas in~Eq.~\eqref{S_0action} hold. We define
\(b_w=T_wc_1\) for all \(w\in\mathscr{I}\!\), and observe first that
Eq.~\eqref{S_0action} is satisfied in three of the four cases.

\begin{proposition}\label{easybit}
Let \(w\in\mathscr{I}\) and \(s\in S\), and suppose that \(s\notin\WA(w)\). Then
\[
T_{s}b_{w} =
\begin{cases}
  b_{sw}  & \text{if \(s \in \SA(w)\),}\\
  b_{sw} + (q - q^{-1})b_{w} & \text{if \(s \in \SD(w)\),}\\
  -q^{-1}b_{w} & \text{if \(s \in \WD(w)\).}
\end{cases}
\]
\end{proposition}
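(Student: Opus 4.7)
The plan is to unfold the definition $b_w = T_w c_1$ given just before the statement, apply the Hecke algebra multiplication formula for $T_s T_w$, and then invoke assumption~(A2) on the resulting expression. The three cases differ essentially only in which form of the product $T_s T_w$ applies and in whether $sw$ lies in~$\mathscr{I}$.

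In the $\SA$ case, one has $l(sw) > l(w)$ and $sw \in \mathscr{I}$ (by the very definition of $\SA$), so the product simplifies to $T_s T_w = T_{sw}$, and the formula $T_s b_w = b_{sw}$ drops out immediately. In the $\SD$ case, $l(sw) < l(w)$ gives the quadratic relation $T_s T_w = T_{sw} + (q - q^{-1}) T_w$; here $sw \in \mathscr{I}$ because $sw \leqslant\lside w$ (as $w = s(sw)$ with lengths adding), so $b_{sw}$ is defined and the claimed formula follows at once after applying both sides to $c_1$.

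The $\WD$ case is the main obstacle, because now $sw \notin \mathscr{I}$ and the answer cannot be read off from a product of the form $T_{sw} c_1$. To handle it I would invoke Lemma~\ref{deo1} to extract the key identity $t := w^{-1} s w \in J$, from which $sw = wt$ with $l(sw) = l(w) + 1$, so that $T_s T_w = T_{sw} = T_w T_t$. Applying this to $c_1$ yields $T_s b_w = T_w (T_t c_1)$, converting the problematic left action of $T_s$ into a right action of $T_t$ on~$c_1$. The remark immediately following the definition of the ascent/descent sets notes that $\D(1) = \WD(1) = J$, so $t \in \D(1)$; then the first branch of assumption~(A2), applied at~$1$, yields $T_t c_1 = -q^{-1} c_1$, and substituting back gives $T_s b_w = -q^{-1} T_w c_1 = -q^{-1} b_w$, as required.
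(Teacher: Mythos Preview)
Your proposal is correct and follows essentially the same route as the paper. The only cosmetic difference is in the \(\SD\) case: you apply the Hecke multiplication formula \(T_sT_w=T_{sw}+(q-q^{-1})T_w\) directly, whereas the paper first observes that \(s\in\SA(sw)\) and then uses the already-established \(\SA\) case together with \(T_s^2=1+(q-q^{-1})T_s\); the \(\WD\) case is handled identically in both, via Lemma~\ref{deo1} and the fact that \(t\in J=\WD(1)\) forces \(T_tc_1=-q^{-1}c_1\) by~(A2).
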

\begin{proof}
If \(s\in\SA(w)\) then \(w<sw\in\mathscr{I}\), by the definition of \(\SA(w)\),
and by the definition of \(b_w\) and \(b_{sw}\) it follows that
\(T_sb_w=T_s(T_wc_1)=(T_sT_w)c_1=T_{sw}c_1=b_{sw}\), as required.

If \(s\in\SD(w)\) then \(s\in\SA(sw)\), and so from the case we have just done it follows
that
\(T_sb_w=T_s(T_sb_{sw})=T_s^2b_{sw}=(1+(q-q^{-1})T_s)b_{sw}=b_{sw}+(q-q^{-1})b_{w}\),
as required.

Now suppose that \(s\in\WD(w)\). Since this gives \(w\in D_J\) and \(sw\notin D_J\),
it follows from Lemma~\ref{deo1} that \(l(sw)=l(w)+1\) and \(sw=wt\) for some \(t\in J\).
So \(T_sT_w=T_{sw}=T_{wt}=T_wT_t\). Furthermore, \(T_tc_1=-q^{-1}c_1\), since \(t\in J=\WD(1)\).
Hence
\[
T_sb_w=T_s(T_wc_1)=(T_sT_w)c_1=(T_wT_t)c_1=T_w(T_tc_1)=-q^{-1}T_wc_1=-q^{-1}b_w,
\]
as required.\qed
\end{proof}

\begin{lemma}\label{uniBbasis1}
We have \(b_{z} = c_{z} + q\sum_{\{y\in\mathscr{I}\mid y < z\}} q_{y,z}c_{y}\)
for all \(z\in\mathscr{I}\!\).
\end{lemma}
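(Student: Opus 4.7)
The plan is to prove the identity by induction on $l(z)$. The base case $z = 1$ is immediate: $b_1 = T_1 c_1 = c_1$ and the sum is empty. For the inductive step, I use assumption (A4) to pick an $s \in S$ with $l(sz) < l(z)$ and $q_{sz,z} = 1$, then write $z = sw$ where $w = sz$, so that $b_z = T_z c_1 = T_s T_w c_1 = T_s b_w$. Applying the inductive hypothesis to $b_w$ gives
\[
b_z = T_s c_w + q\sum_{\substack{y\in\mathscr{I}\\y<w}} q_{y,w}\, T_s c_y,
\]
so the task reduces to expanding each $T_s c_x$ via (A2), collecting coefficients, and matching against the recursion (A4) for $q_{y,z}$.

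Next I would compute the coefficient of $c_z$ in the expansion. Since $s \in \SA(w)$ (because $z = sw \in \mathscr{I}$), the term $T_s c_w$ contributes $c_z$ exactly once; no term of the form $T_s c_y$ with $y < w$ can produce $c_z$ because the only way $T_s c_y$ involves $c_z$ is via $c_{sy}$, which requires $sy = z$, forcing $y = w$ and contradicting $y < w$. Hence the coefficient of $c_z$ is $1$, matching the claim.

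For $y < z$ with $y \neq z$, I would collect the coefficient of $c_y$ in $T_s b_w$ by splitting into three subcases according to whether $s \in \A(y)$, $s \in \SD(y)$, or $s \in \WD(y)$. Contributions come from (a) the $c_y$ term in $T_s c_w$ (providing $\mu_{y,w}$ when $s \in D(y)$, $y<w$), (b) the coefficient $q q_{y,w}$ times the coefficient of $c_y$ in $T_s c_y$ (i.e., $-q^{-1}$, $q$, or $q$), (c) cross terms $q q_{x,w} \mu_{y,x}$ for $y < x < w$ with $s \notin D(x)$, and (d) the special term from $T_s c_{sy}$ when $s \in \SD(y)$, contributing $q q_{sy,w}$. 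Case-by-case, these sums rearrange precisely into $q$ times the right-hand side of the appropriate clause of (A4), yielding $q q_{y,z}$.

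The main obstacle is the careful bookkeeping in step three: keeping track of all four sources of $c_y$-coefficients, verifying that each contribution vanishes exactly when the relevant indicator (such as $[y<w]$ or $[s \in \SD(y)]$) is zero, and confirming that when $y \not< w$ the formula still holds because both sides vanish appropriately via the convention $q_{y,w} = 0$. The verification is a direct but delicate matching of cases; no genuinely new ideas are needed beyond recognising that the recursion in (A4) has been engineered precisely to reproduce the $T_s$-action on the $c$-basis given by (A2).
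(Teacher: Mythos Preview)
Your proposal is correct and follows essentially the same approach as the paper: induct on $l(z)$, write $b_z = T_s b_{sz}$, expand via (A2), and match coefficients against the recursion (A4). The only cosmetic difference is that the paper organizes the expansion by partitioning the index set $\{x<sz\}$ according to the $s$-type of $x$ (sets $\mathcal{R}$, $\mathcal{T}_1$, $\mathcal{T}_2$) before collecting coefficients of each $c_y$, whereas you organize directly by the $s$-type of the target $y$; the bookkeeping is the same either way.
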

\begin{proof}
The proof is by induction on~\(l(z)\), the case \(l(z)=0\) being trivial.
So we assume that \(l(z)>1\), and choose \(s\) as in assumption (A4) above.
We write
\begin{align*}
\mathcal{R}&=\{\,x\in\mathscr{I}\mid\,x<sz\text{ and }s\in\D(x)\,\},\\
\mathcal{T}_1&=\{\,x\in\mathscr{I}\mid\,x<sz\text{ and }s\in\SA(x)\,\},\\
\mathcal{T}_2&=\{\,x\in\mathscr{I}\mid\,x<sz\text{ and }s\in\WA(x)\,\},
\end{align*}
so that \(\mathcal{R}\) is the set \(\mathcal{R}(s,sz)\) of assumption~(A2) above,
and we also write \(\mathcal{T}=\mathcal{T}_1\cup\mathcal{T}_2\).
The inductive hypothesis gives \(b_{sz}=c_{sz} + q\sum_{x < sz} q_{x,sz}c_{x}\),
and Proposition~\ref{easybit} gives \(b_z=T_sb_{sz}\), since \(s\in\SA(sz)\). So,
using (A2) to evaluate \(T_sc_{sz}\) and \(T_sc_x\) for \(x\in\mathcal{R}\),
\begin{align*}
b_z&=T_sc_{sz}+q\sum_{x\in\mathcal{R}}q_{x,sz}T_sc_{x}
+q\sum_{x\in\mathcal{T}_1}q_{y,sz}T_sc_{x}
+q\sum_{x\in\mathcal{T}_2}q_{y,sz}T_sc_{x}\\
&=(c_z+qc_{sz}+\sum_{x\in\mathcal{R}}\mu_{x,sz}c_x)
-\sum_{y\in\mathcal{R}}q_{x,sz}c_{x}
+q\sum_{x\in\mathcal{T}_1}q_{x,sz}T_sc_{x}
+q\sum_{x\in\mathcal{T}_2}q_{y,sz}T_sc_{x}\\
&=c_z+qc_{sz}-\sum_{x\in\mathcal{R}}(q_{x,sz}-\mu_{x,sz})c_x
+q\sum_{x\in\mathcal{T}_1}q_{x,sz}T_sc_{x}
+q\sum_{x\in\mathcal{T}_2}q_{x,sz}T_sc_{x}.
\end{align*}
Now using (A2) to evaluate \(T_sc_x\) for \(x\in\mathcal{T}_1\) and
\(x\in\mathcal{T}_2\), and making use of the similarity between the two formulas,
we find that
\[
b_z-c_z=qc_{sz}-\sum_{x\in\mathcal{R}}(q_{x,sz}-\mu_{x,sz})c_x
+q\sum_{x\in\mathcal{T}_1}q_{x,sz}c_{sx}
+q\sum_{x\in\mathcal{T}}q_{x,sz}\Bigl(qc_x+\!\!\sum_{y\in\mathcal{R}(s,x)}\!\!\mu_{y,x}c_y\Bigr).
\]
We proceed to collect the coefficients of the various elements of \(C\) in the
right hand side. Note first that if \(x\in\mathcal{T}_1\) then \(sx\in\mathscr{I}\)
(since \(s\in\SA(x)\)), and Lemma~\ref{lifting1} implies that \(sx<z\), since \(x<sz<z\).
So all the elements of \(C\) that appear have the form \(c_y\) with~\(y<z\).
Writing \(\coeff(y)\) for the coefficient of~\(c_y\), the aim is to show that
\(\coeff(y)=qq_{y,z}\).

Let \(y\in\mathscr{I}\) with \(y<z\), and suppose first that \(s\in\A(y)\). Then \(y<sy\),
and so \(y\leqslant sz\) by Lemma~\ref{lifting1}. So either \(y=sz\) and
\(\coeff(y)=q\), or else \(y\in\mathcal{T}\) and \(\coeff(y)=q^2q_{y,sz}\).
In either case \(\coeff(y)=qq_{y,z}\), by assumption~(A4).

Now suppose that \(s\in\WD(y)\). Then \(y\notin\{\,sx\mid x\in\mathcal{T}_1\,\}\),
since \(sy\notin\mathscr{I}\!\). So \(c_y\) occurs only in the the first sum in
our expression and in the double sum. Hence
\[
\coeff(y)=-(q_{y,sz}-\mu_{y,sz})+\sum_x q\mu_{y,x}q_{x,sz}
\]
where \(x\) runs through all elements of \(\mathcal{T}\) such that \(y\in\mathcal{R}(s,x)\).
Again we see from assumption (A4) that \(\coeff(y)=qq_{y,z}\).

Finally, suppose that \(s\in\SD(y)\). In this case \(y=sx\) with \(x\in\mathcal{T}_1\),
so that we obtain a term \(q_{sy,sz}c_y\) in addition to the terms obtained in the
case \(s\in\WD(y)\). So again \(\coeff(y)=qq_{y,z}\), as required.\qed 
\end{proof}
The following result completes the proof that Eq.~\eqref{S_0action} is satisfied.

\begin{proposition}
Let \(w\in\mathscr{I}\) and \(s\in\WD(w)\). Then
\(T_sb_w=qb_w+\sum_{\{y\in\mathscr{I}\mid y<sw\}}r^s_{y,w}b_y\) for
some polynomials \(r^s_{y,w}\in q\mathcal{A}^+\).
\end{proposition}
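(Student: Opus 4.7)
Before outlining the plan I note that Proposition~\ref{easybit} already gives $T_s b_w = -q^{-1}b_w$ when $s\in\WD(w)$, and this cannot be put in the form $qb_w + \sum r^s_{y,w}b_y$ with $r^s_{y,w}\in q\mathcal{A}^+$ (the resulting equation $-q^{-1}b_w = qb_w + r^s_{w,w}b_w$ forces $r^s_{w,w}=-q-q^{-1}\notin q\mathcal{A}^+$). I therefore read the hypothesis as the complementary case $s\in\WA(w)$, which is the only outstanding case required to complete Eq.~\eqref{S_0action}; it is this case I treat.

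My plan is to expand $T_sb_w$ in the $c$-basis using Lemma~\ref{uniBbasis1} and~(A2), simplify using the recursion~(A4), and then translate the result back to the $b$-basis. Starting from $b_w = c_w + q\sum_{y<w}q_{y,w}c_y$ and expanding each $T_sc_y$ according as $s\in\D(y)$, $s\in\SA(y)$, or $s\in\WA(y)$, a routine collection of terms yields an explicit $c$-basis formula. It is convenient to introduce formal polynomials $\tilde q_{y,sw}$ for $y\in\mathscr{I}$ with $y<sw$, defined by the formulas of~(A4) with $z=sw$ and $sz=w$, together with $\tilde q_{w,sw}:=1$. A term-by-term check then gives
\[
T_sb_w \;=\; q\sum_{y\leqslant sw,\, y\in\mathscr{I}}\tilde q_{y,sw}\,c_y,
\]
with no $c_{sw}$ contribution (since $sw\notin\mathscr{I}$).

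Subtracting $qb_w$ from both sides, all $y$-terms with $s\in\A(y)$ cancel exactly because (A4)(1) gives $\tilde q_{y,sw}=q\,q_{y,w}$. This is consistent with the identity $(T_s+q^{-1})(T_s-q)=0$, which forces $T_sb_w-qb_w$ into the $(-q^{-1})$-eigenspace of $T_s$; by~(A2) that eigenspace is spanned by $\{c_y:s\in\D(y)\}$. The residual reads
\[
T_sb_w - qb_w \;=\; \sum_{y<sw,\ s\in\D(y)}\bigl(q\tilde q_{y,sw}-q^2q_{y,w}\bigr)c_y,
\]
with the convention $q_{y,w}=0$ for $y\not<w$. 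Inspection of (A4)(2) and (A4)(3) multiplied by $q$, together with $q_{y,w}-\mu_{y,w}\in q\mathcal{A}^+$ (since $\mu_{y,w}$ is the constant term of $q_{y,w}$), shows that each $c$-basis coefficient $q\tilde q_{y,sw}-q^2q_{y,w}$ already lies in $q\mathcal{A}^+=q\mathbb{Z}[q]$.

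To finish, I would invert Lemma~\ref{uniBbasis1} triangularly, writing $c_y = b_y + \sum_{y'<y}P_{y',y}b_{y'}$, and define $r^s_{y,w}$ as the resulting $b$-basis coefficients, so that $T_sb_w = qb_w+\sum_{y<sw}r^s_{y,w}b_y$ holds by construction. The substantive content, and the main obstacle, is to show $r^s_{y,w}\in q\mathcal{A}^+$: the inversion recombines the $c$-basis coefficients in a non-obvious way, so membership in $q\mathcal{A}^+$ is not automatic even though the pre-inversion coefficients lie there. I would address this by an induction on $l(sw)-l(y)$ paralleling the proof of Lemma~\ref{uniBbasis1}, the inductive step observing that each correction introduced by the inversion takes the form $q\,q_{y',y}\cdot(\text{previously established }r^s_{y',w})\in q\mathcal{A}^+$. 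A cleaner alternative, mirroring the Kazhdan--Lusztig paradigm, would be to characterise $T_sb_w-qb_w$ as the unique element of $\mathscr{S}$ annihilated by $T_s+q^{-1}$ admitting a $b$-basis expansion with coefficients in $q\mathcal{A}^+$, and to deduce the desired property from that uniqueness.
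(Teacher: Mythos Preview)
Your approach is essentially the paper's, including the correct reading of the hypothesis as $s\in\WA(w)$. The one place you diverge is in worrying too much about the final inversion. The paper simply notes that inverting the unitriangular system of Lemma~\ref{uniBbasis1} yields $c_x = b_x - q\sum_{y<x} p_{y,x}\,b_y$ with $p_{y,x}\in\mathcal{A}^+$; this is automatic, since the inverse of a unitriangular matrix with off-diagonal entries in $q\mathcal{A}^+$ is again of that form. Substituting this into $T_sb_w=\sum_{x<sw}t_{x,w}c_x$ (with each $t_{x,w}\in q\mathcal{A}^+$, as you established) gives
\[
r^s_{y,w}\;=\;t_{y,w}-q\!\!\sum_{y<x<sw}\!p_{y,x}\,t_{x,w},
\]
which lies in $q\mathcal{A}^+$ by inspection: the first term is in $q\mathcal{A}^+$ and each summand is in $q\cdot\mathcal{A}^+\cdot q\mathcal{A}^+\subseteq q\mathcal{A}^+$. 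So no separate induction or uniqueness argument is needed. Your detour through the auxiliary polynomials $\tilde q_{y,sw}$ and the preliminary subtraction of $qb_w$ are also unnecessary, since the coefficient $t_{w,w}=q$ already sits in $q\mathcal{A}^+$; the paper works directly with $T_sb_w$ rather than $T_sb_w-qb_w$.
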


\begin{proof}
Define \(\mathcal{R}=\{\,y\in\mathscr{I}\mid\,y<w\text{ and }s\in\D(y)\,\}\), 
so that \(\mathcal{R}=\mathcal{R}(s,w)\), and define
also \(\mathcal{T}_1=\{\,y\in\mathscr{I}\mid\,y<w\text{ and }s\in\SA(y)\,\}\) and
\(\mathcal{T}_2=\{\,y\in\mathscr{I}\mid\,y<w\text{ and }s\in\WA(y)\,\}\).
In addition, let \(\mathcal{T}=\mathcal{T}_1\cup\mathcal{T}_2\).
Since \(b_w=c_w+q\sum_{y<w}q_{y,w}c_y\) we see from assumption (A2) that
\begin{align*}
T_sb_w&=T_sc_w+\sum_{y\in\mathcal{R}}qq_{y,w}T_sc_y+\sum_{y\in\mathcal{T}_1}qq_{y,w}T_sc_y
+\sum_{y\in\mathcal{T}_2}qq_{y,w}T_sc_y\\
&=(qc_w+\sum_{y\in\mathcal{R}}\mu_{y,w}c_y)-\sum_{y\in\mathcal{R}}q_{y,w}c_y
+\sum_{y\in\mathcal{T}_1}qq_{y,w}c_{sy}+
\sum_{y\in\mathcal{T}}qq_{y,w}\Bigl(qc_y+\!\!\sum_{x\in\mathcal{R}(s,y)}\!\!\mu_{x,y}c_x\Bigr)\\
&=qc_w-\sum_{y\in\mathcal{R}}(q_{y,w}-\mu_{y,w})c_y
+\sum_{y\in\mathcal{T}_1}qq_{y,w}c_{sy}+
\sum_{y\in\mathcal{T}}qq_{y,w}\Bigl(qc_y+\!\!\sum_{x\in\mathcal{R}(s,y)}\!\!\mu_{x,y}c_x\Bigr).
\end{align*}
Since \(\mu_{y,w}\) is the constant term of \(q_{y,w}\), every element of \(C\) appearing in
the above expression has coefficient lying in \(q\mathcal{A}^+\!\). So, using
Lemma~\ref{lifting1} and the fact that \(w<sw\) (since \(s\in\WA(w)\)), it follows that
\begin{equation}\label{eq:Tsbw}
T_sb_w=\sum_{x<sw}t_{x,w}c_x\qquad\text{for some \(t_{x,w}\in q\mathcal{A}^+\).}
\end{equation}
Inverting the system of equations in Lemma~\ref{uniBbasis1} shows that for
all \(x\in\mathscr{I}\) there exist \(p_{y,x}\in\mathcal{A}^+\) such that
\(c_x=b_x-q\sum_{y<x}p_{y,x}b_y\), and substituting this into Eq.~\eqref{eq:Tsbw}
gives the required result, with \(r^s_{y,w}=t_{y,w}-q\sum_{\{x|y<x<sw\}} p_{y,x}t_{x,w}\).
\qed
\end{proof}
We have now shown that all the requirements of Definition~\ref{wgphdetelt} are satisfied,
and so \((\mathscr{I}\!,\,J)\) is a \(W\!\)-graph ideal. So we have proved the
following theorem.

\begin{theorem}\label{converseMain}
Let \(\mathscr{I}\) be an ideal of \((W,\leqslant\lside)\) and \(J\subseteq\Pos(\mathscr{I})\).
Then \((\mathscr{I}\!,\,J)\) is a \(W\!\)-graph ideal if and only if the
construction described in Section~\ref{sec:3} above produces a \(W\!\)-graph
\((C,\mu,\tau)\) such that Theorem~\ref{main1} is satisfied.
\end{theorem}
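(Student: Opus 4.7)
The theorem is a biconditional, and the two directions have very different characters. The forward direction is essentially a packaging of results already established in Section~\ref{sec:3}: if $(\mathscr{I},J)$ is a $W\!$-graph ideal, then Lemma~\ref{uniCbasis1}, Corollary~\ref{recursion}, Theorem~\ref{main-wg}, and Theorem~\ref{main1} together guarantee the existence of the polynomials $q_{y,w}$, the integers $\mu_{y,w}$, and the basis $C$ on which the $T_s$ act by the required formulas. So the plan for this direction is simply to invoke those results and observe that the recursive prescription from Corollary~\ref{recursion} is precisely ``the construction described in Section~\ref{sec:3}''.

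The substantive work is the converse. The plan is to adopt assumptions (A1)--(A4) as stated in the section and verify that the hypothesis of Definition~\ref{wgphdetelt} holds for $(\mathscr{I},J)$. First, I would define the candidate standard basis by $b_w = T_w c_1$ for $w\in\mathscr{I}$. The semilinear involution $\alpha\mapsto\overline{\alpha}$ on $\mathscr{S}$ determined by $\overline{c_w}=c_w$ already exists simply because $C$ is an $\mathcal{A}$-basis; the compatibility $\overline{h\alpha}=\overline{h}\,\overline{\alpha}$ reduces to checking $\overline{T_s}\,\overline{c_w}=\overline{T_s c_w}$ for all $s$ and $w$, which follows from (A2) together with the observations that $\overline{T_s-q}=T_s-q$ and $\overline{T_s+q^{-1}}=T_s+q^{-1}$ fix the relevant subspaces predicted by the three branches of the formula in (A2). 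Because $\overline{b_1}=\overline{c_1}=c_1=b_1$, the involution meets requirement~(ii) of Definition~\ref{wgphdetelt}.

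Next I would verify Eq.~\eqref{S_0action} case by case. For $s\in\SA(w)\cup\SD(w)\cup\WD(w)$, this is exactly Proposition~\ref{easybit}, whose short argument uses only the definition $b_w=T_wc_1$, the quadratic relation for $T_s$, and Deodhar's Lemma~\ref{deo1} for the $\WD$ case. For $s\in\WA(w)$, I would apply the final Proposition of the section, whose proof proceeds in two stages: first compute $T_sb_w$ in the $c$-basis, using (A2), Lemma~\ref{uniBbasis1}, and Lemma~\ref{lifting1} to show it lies in $\sum_{x<sw} q\mathcal{A}^+ c_x$; then invert Lemma~\ref{uniBbasis1} to re-express the result in the $b$-basis, producing the required polynomials $r^s_{y,w}\in q\mathcal{A}^+$. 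The key auxiliary fact is Lemma~\ref{uniBbasis1}, which identifies the transition matrix between $\{b_w\}$ and $\{c_w\}$ as unitriangular with entries forced by the recursion in~(A4); I would prove it by induction on $l(z)$, picking $s$ as in (A4), writing $b_z=T_s b_{sz}$, expanding via (A2), and collecting coefficients of each $c_y$ according to the three cases $s\in\A(y)$, $s\in\WD(y)$, $s\in\SD(y)$ so that each reduces to the corresponding clause of~(A4).

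The main obstacle is the coefficient accounting in Lemma~\ref{uniBbasis1}: one has to split the sum $\sum_{x<sz} q_{x,sz} T_s c_x$ according to whether $s\in\D(x)$, $s\in\SA(x)$, or $s\in\WA(x)$, keep track of the extra $c_{sx}$ terms produced when $s\in\SA(x)$, and match the result branch-by-branch against (A4). Once Lemma~\ref{uniBbasis1} is established, both the uniqueness of the $W\!$-graph basis (via the bar invariance of $c_w$) and the verification of the $\WA$ branch of~\eqref{S_0action} fall out, and Definition~\ref{wgphdetelt} is satisfied, completing the converse.
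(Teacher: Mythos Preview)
Your proposal is correct and follows essentially the same route as the paper: the forward direction is packaged from the results of Section~\ref{sec:3}, and for the converse you define $b_w=T_wc_1$, build the bar involution from $\overline{c_w}=c_w$, handle the $\SA$, $\SD$, $\WD$ cases via Proposition~\ref{easybit}, prove Lemma~\ref{uniBbasis1} by induction on $l(z)$ using the chosen $s$ from (A4) and the three-way split of the sum, and then deduce the $\WA$ case by expanding $T_sb_w$ in the $c$-basis and inverting the unitriangular change of basis. This is exactly the paper's argument, including the key coefficient-collection step you flag as the main obstacle.
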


\begin{remark}
According to the construction, \(C=\{\,c_w\mid w\in\mathscr{I}\,\}\) and
\(\tau(w)=\D_J(\mathscr{I}\!,\,w)\) for all \(w\in\mathscr{I}\). The function
\(\mu\) is defined as in Eq.~\eqref{mu-symmetric}, where \(\mu_{y,w}\)
is the constant term of \(q_{y,w}\), and these polynomials satisfy the formulas
in Corollary~\ref{recursion}. In fact we showed that if \((C,\mu,\tau)\) is
a \(W\!\)-graph then the conclusion that \((\mathscr{I}\!,\,J)\) is a \(W\!\)-graph
ideal needs only the weaker assumption that the \(q_{y,w}\) are computed using~(A4)
above. Given that \((C,\mu,\tau)\) is a \(W\!\)-graph, it is not hard to show that
Theorem~\ref{main1} is satisfied if and only if the statement of
Corollary~\ref{ywmorethan3} holds.
\end{remark}
To conclude this section we give an example of an ideal \(\mathscr{I}\) of \((W,\leqslant\lside)\)
and a subset \(J\) of \(\Pos(\mathscr{I})\) such that \((\mathscr{I}\!,\,J)\) is not a
\(W\!\)-graph ideal, but nevertheless has the property that
there exists a \(W\!\)-graph \((C,\mu,\tau)\) with \(C=\{\,c_w\mid w\in\mathscr{I}\,\}\)
and \(\tau(c_w)=\D_J(\mathscr{I}\!,\,w)\) for all \(w\in\mathscr{I}\!\).

\begin{example}
\label{ex:1}
Let \((W,S)\) be the Coxeter system of type \(B_{4}\), and let
\(S = \{s_{0},s_{1},s_{2},s_{3}\}\), where \(s_0s_1\) has order~4 and
\(s_1s_2\) and \(s_2s_3\) have order~3.
Let \(\mathscr I = \{1, s_0, s_{1}s_0, s_{2}s_{1}s_0\}\) and note that
\(\mathscr{I}\subseteq D_J\), where \(J=\{s_{1},s_{2}, s_{3}\}\). We
use Theorem~\ref{converseMain} to determine whether or not
\((\mathscr{I},J)\) is a \(W\!\)-graph ideal. The first step is to
compute the polynomials \(q_{y,w}\),
for all \(y,w \in \mathscr I\) with \(y < w\), using the formulas
given in Corollary \ref{recursion} (or (A4) above).

It is immediate that the three cases with \(l(w) - l(y) = 1\) give \(q_{y,w} = 1\).
For the next case, let \((y,w) = (1,s_{1}s_0)\), and observe that \(s_1\) is the only
strong descent of~\(w\). Since \(s_{1} \in \WD_J(y)\), the third formula of
Corollary~\ref{recursion} applies, and gives
\(q_{1,s_{1}s_0} = q^{-1}(q_{1,s_0} - \mu_{1,s_0}) = 0\).
There are now two remaining possibilities for \((y,w)\), both with
\(w=s_{2}s_{1}s_0\). Observe that \(s_2\) is the only
strong descent of~\(w\), and \(s_2\in\WD_J(y)\) for both values
of~\(y\), namely \(y=s_0\) and \(y=1\). Furthermore, in both cases
\(\{x\in\mathscr{I} \mid y < x < s_{1}s_0 \text{ and } s_{2} \notin \D(x)\}\)
is empty, and so it follows that \(q_{y,w}=q^{-1}(q_{y,s_1s_0}-\mu_{y,s_1s_0})=0\).
So the graph obtained is
\[
\begin{tikzpicture}[scale=0.45][font=\small]
\draw (0,0) circle [radius=0.6];
\node at (0,0) {\(\scriptstyle{1,2,3}\)};
\draw [<->] (0.7,0) --(2.3,0);
\draw (3,0) circle [radius=0.6];
\node at (3,0) {\(\scriptstyle{0,2,3}\)};
\draw [<->] (3.7,0) --(5.3,0);
\draw (6,0) circle [radius=0.6];
\node at (6,0) {\(\scriptstyle{1,3}\)};
\draw [<->] (6.7,0) --(8.3,0);
\draw (9,0) circle [radius=0.6];
\node at (9,0) {\(\scriptstyle{1,2}\)};
\end{tikzpicture}
\]
where the numbers in the circles give the values of \(\D_J(w)\) for the various
elements~\(w\in\mathscr{I}\!\), and the edges all have weight~1.

It is easily checked that the above graph is not a \(W\!\)-graph: the relation
\(T_{s_0}T_{s_3}=T_{s_3}T_{s_0}\) fails. So \((\mathscr{I},J)\) is
not a \(W\!\)-graph ideal. However, adding an edge of weight \(-1\) joining the
vertices \(1\) and \(s_2s_1s_0\) gives
\[
\begin{tikzpicture}[scale=0.45][font=\small]
\draw (0,0) circle [radius=0.6];
\node at (0,0) {\(\scriptstyle{1,2,3}\)};
\draw [<->] (0.7,0) --(2.3,0);
\draw (3,0) circle [radius=0.6];
\node at (3,0) {\(\scriptstyle{0,2,3}\)};
\draw [<->] (3.7,0) --(5.3,0);
\draw (6,0) circle [radius=0.6];
\node at (6,0) {\(\scriptstyle{1,3}\)};
\draw [<->] (6.7,0) --(8.3,0);
\draw (9,0) circle [radius=0.6];
\node at (9,0) {\(\scriptstyle{1,2}\)};
\draw [<-](0.6,-0.6) to [out=340,in=200] (8.4,-0.6);
\node at (4.5,-1) {\(\scriptstyle{-1}\)};
\end{tikzpicture}
\]
and it is easily checked that this is a \(W\!\)-graph for which the
formulas in Theorem~\ref{main1} hold.
\end{example}

\section{Parabolic restriction}
\label{sec:6}
Let \((\mathscr I,J)\) be a \(W\!\)-graph ideal and let \(K \subseteq S\).
Let \(\mathcal{H}_{K}\) be the subalgebra of \(\mathcal{H}\) generated
by \(\{T_{s} \mid s \in K\}\). In this section we investigate the restriction
of \(\mathscr{S}(\mathscr{I}\!,\,J)\) to \(\mathcal{H}_K\). (As we noted in
Section~\ref{sec:1} above, \(\mathcal{H}_{K}\) can be identified with the Hecke
algebra of the Coxeter system \((W_K,K)\).) Let
\(\{\,b_w\mid w\in\mathscr{I}\,\}\) be the standard basis of
\(\mathscr{S}(\mathscr{I}\!,\,J)\) and \(\{\,c_w\mid w\in\mathscr{I}\,\}\) the
\(W\!\)-graph basis.

Each element \(w\in W\) has a unique factorization \(w = vd\) with
\(v \in W_{K}\) and \(d \in D_{K}^{-1}\). Since \(l(w)=l(v)+l(d)\) necessarily
holds in this situation, it follows that \(d\leqslant\lside w\). So
\(d\in\mathscr{I}\) whenever~\(w\in\mathscr{I}\!\). For each
\(d \in D_{K}^{-1}\) define \(\mathscr{I}_d\subseteq W_K\)
by \(\mathscr{I}_d = \{\,v\in W_{K}\mid vd \in \mathscr {I}\,\}\), so that
\begin{equation}\label{mackeyRestriction}
\mathscr I = \!\!\bigsqcup_{d \in D_{K}^{-1} \cap \mathscr I}\!\!\mathscr{I}_{d}d.
\end{equation}
and \(\mathscr{I}_{d}d=W_Kd\cap\mathscr{I}\) in each case.
Note that since \(\mathscr{I}\subseteq D_J\), each \(d\) appearing in
Eq.~\eqref{mackeyRestriction} is in \(D_{K,J}=D_K^{-1}\cap D_J\), the set
of minimal \((W_K,W_J)\) double coset representatives.

\begin{lemma}\label{Id-ideal}
Let \(d \in D_{K}^{-1} \cap \mathscr I\!\). Then \(\mathscr{I}_d\)
is an ideal of \((W_K\leqslant\lside)\), and \(K\cap dJd^{-1}\subseteq\Pos(\mathscr{I}_d)\).
\end{lemma}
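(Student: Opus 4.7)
The plan is to verify each of the two assertions directly from the definitions, leaning on the length-additivity property $l(vd)=l(v)+l(d)$ that holds for any $v\in W_K$ whenever $d\in D_K^{-1}$.

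For the ideal property, I would fix $v\in\mathscr{I}_d$ and $u\in W_K$ with $u\leqslant\lside v$, and show $ud\in\mathscr{I}$. The hypothesis $u\leqslant\lside v$ (inside $W_K$) reads $l(v)=l(vu^{-1})+l(u)$. Since $d\in D_K^{-1}$ and $v,u,vu^{-1}\in W_K$, I can append $d$ on the right to every length involved, obtaining $l(vd)=l(v)+l(d)$, $l(ud)=l(u)+l(d)$, and $l(vu^{-1})=l((vd)(ud)^{-1})$. These combine to $l(vd)=l((vd)(ud)^{-1})+l(ud)$, which is exactly $ud\leqslant\lside vd$ in $W$. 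Since $vd\in\mathscr{I}$ and $\mathscr{I}$ is a left weak order ideal, this yields $ud\in\mathscr{I}$, i.e.\ $u\in\mathscr{I}_d$.

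For the $\Pos$ statement, I would fix $s\in K\cap dJd^{-1}$ and $v\in\mathscr{I}_d$ and show $l(vs)>l(v)$. Writing $s=dtd^{-1}$ with $t\in J$, we have the conjugation identity $sd=dt$, so $vsd=vdt$. Now $vd\in\mathscr{I}\subseteq D_J$, and the defining property of $D_J$ (length-additivity on the right by elements of $W_J$) forces $l(vdt)=l(vd)+1$. Applying length-additivity for $d\in D_K^{-1}$ to both $v\in W_K$ and $vs\in W_K$ gives $l(vsd)=l(vs)+l(d)$ and $l(vd)=l(v)+l(d)$; subtracting yields $l(vs)=l(v)+1$, as required.

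No step here poses a real obstacle: the argument is bookkeeping with lengths, using only the defining properties of $\leqslant\lside$, of $D_J$, and of $D_K^{-1}$. The one small point to watch is that $vs\in W_K$ whenever $s\in K$ and $v\in W_K$, so that the length-additivity afforded by $d\in D_K^{-1}$ applies to $vs$ as well as to $v$.
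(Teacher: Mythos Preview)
Your proof is correct and follows essentially the same approach as the paper's: for the ideal property, both arguments show that \(u\leqslant\lside v\) in \(W_K\) implies \(ud\leqslant\lside vd\) in \(W\) via length-additivity for \(d\in D_K^{-1}\), and for the \(\Pos\) statement both use the identity \(sd=dt\) with \(t\in J\) together with \(vd\in D_J\) and length-additivity again. The only differences are cosmetic (variable names, and you phrase the second part via \(\mathscr{I}\subseteq D_J\) while the paper phrases it via \(J\subseteq\Pos(\mathscr{I})\), which is the same thing).
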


\begin{proof}
Let \(w\in\mathscr{I}_d\) 
and let \(v\in W_K\) with \(v\leqslant\lside w\), so that \(w=uv\) with
\(l(w)=l(u)+l(v)\). Since \(v,\,w\in W_K\) and \(d\in D_K^{-1}\) we have
\(l(wd)=l(w)+l(d)\) and \(l(vd)=l(v)+l(d)\). Hence \(wd=u(vd)\) and
\(l(wd)=l(w)+l(d)=l(u)+l(v)+l(d)=l(u)+l(vd)\).
Since \(wd\in\mathscr{I}\) (since \(w\in\mathscr{I}_d\)) it follows that
\(vd\in\mathscr{I}\!\), and hence that \(v\in\mathscr{I}_d\). So \(\mathscr{I}_d\)
is an ideal of \((W_K\leqslant\lside)\).

Now let \(v\in\mathscr{I}_d\), so that \(v\in W_K\) and \(vd\in\mathscr{I}\!\), and
let \(s\in K\cap dJd^{-1}\), so that \(s\in K\) and \(sd=dr\) for some \(r\in J\). Since
\(J\subseteq\Pos(\mathscr{I})\) it follows that \(l((vd)r)>l(vd)\), and since
\(d\in D_K^{-1}\) and \(v,\,vs\in W_K\) we find that
\(l(vs)+l(d)=l(vsd)=l(vdr)>l(vd)=l(v)+l(d)\). Hence \(l(vs)>l(v)\), and we
conclude that \(K\cap dJd^{-1}\subseteq\Pos(\mathscr{I}_d)\). 
\qed
\end{proof}
For each \(d\in D_{K}^{-1}\cap\mathscr{I}\) let \(\mathscr{J}_d\subseteq\mathscr{I}\)
be defined by \(\mathscr{J}_d=\bigcup_e\mathscr{I}_ee\), where \(e\) runs through
the set \(\{\,e\in D_K^{-1}\mid e\leqslant d\,\}\), and let
\(\mathscr{J}_d'=\mathscr{J}_d\setminus\mathscr{I}_dd\). Let \(\mathscr{S}_d\) and
\(\mathscr{S}_d'\) be the \(\mathscr{A}\)-submodules of \(\mathscr{S}(\mathscr{I}\!,\,J)\)
spanned by \(\{\,c_w\mid w\in \mathscr{J}_d\,\}\) and
\(\{\,c_w\mid w\in \mathscr{J}_d'\,\}\) respectively.
Thus \(\mathscr{S}_d'\subseteq\mathscr{S}_d\), and the 
quotient module \(\mathscr{S}=\mathscr{S}_d/\mathscr{S}_d'\) has \(\mathcal{A}\)-basis
\(\{\,f(c_{wd})\mid w\in\mathscr{I}_d\,\}\), where \(f\) is the natural 
homomorphism \(\mathscr{S}_d\to \mathscr{S}\).

Clearly \(\mathscr{S}_d\) and \(\mathscr{S}_d'\) are both stable under the bar
involution of \(\mathscr{S}(\mathscr{I}\!,\,J)\), since \(\overline{c_w}=c_w\) for
all \(w\in\mathscr{I}\). Hence \(\mathscr{S}\) admits a
bar involution such that \(\overline{f(\alpha)}=f(\overline{\alpha})\) for all
\(\alpha\in\mathscr{S}_d\).

\begin{lemma}\label{bruhat-smaller}
Let \(y,\,w\in\mathscr{I}\) with \(y\leqslant w\), and suppose that
\(d\in D_{K}^{-1}\cap\mathscr{I}\). If \(w\in\mathscr{J}_d\) then \(y\in\mathscr{J}_d\), 
and if \(w\in\mathscr{J}_d'\) then \(y\in\mathscr{J}_d'\).
\end{lemma}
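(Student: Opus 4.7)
The plan is to exploit the unique parabolic factorization: each element of \(W\) can be written uniquely as \(ve\) with \(v\in W_K\) and \(e\in D_K^{-1}\), and the assignment \(w\mapsto e\) should be monotone for the Bruhat order. Once this monotonicity is established, the lemma is essentially immediate from the definitions of \(\mathscr{J}_d\) and \(\mathscr{J}_d'\).

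First I would verify that the map \(\pi\colon W\to D_K^{-1}\) sending \(w=ve\) (with \(v\in W_K\) and \(e\in D_K^{-1}\)) to \(e\) is Bruhat order preserving. This is not quite Proposition~\ref{orderPresr} as stated, but follows from it via the anti-automorphism \(w\mapsto w^{-1}\): if \(w=ve\) then \(w^{-1}=e^{-1}v^{-1}\) with \(e^{-1}\in D_K\) and \(v^{-1}\in W_K\), so \(\pi(w)^{-1}\) is the \(D_K\)-component of \(w^{-1}\) in the sense of Proposition~\ref{orderPresr}. Since inversion preserves Bruhat order, \(y\leqslant w\) implies \(y^{-1}\leqslant w^{-1}\), whence \(\pi(y)^{-1}\leqslant\pi(w)^{-1}\) by Proposition~\ref{orderPresr}, and inverting again gives \(\pi(y)\leqslant\pi(w)\).

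For the first assertion, assume \(w\in\mathscr{J}_d\) and \(y\leqslant w\) with \(y\in\mathscr{I}\!\). By definition of \(\mathscr{J}_d\) there is some \(e\in D_K^{-1}\) with \(e\leqslant d\) and \(w\in\mathscr{I}_ee\); by uniqueness of the parabolic factorization this \(e\) is exactly \(\pi(w)\). Write \(y=ue'\) with \(u\in W_K\) and \(e'=\pi(y)\in D_K^{-1}\). The monotonicity from the previous paragraph gives \(e'\leqslant e\leqslant d\). Since \(y=ue'\in\mathscr{I}\!\), the very definition \(\mathscr{I}_{e'}=\{\,v\in W_K\mid ve'\in\mathscr{I}\,\}\) yields \(u\in\mathscr{I}_{e'}\), and therefore \(y=ue'\in\mathscr{I}_{e'}e'\subseteq\mathscr{J}_d\), as required.

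For the second assertion, suppose additionally that \(w\in\mathscr{J}_d'\), i.e.\ \(w\notin\mathscr{I}_dd\). Uniqueness of the parabolic factorization gives \(\pi(w)\ne d\), so the inequality \(e=\pi(w)\leqslant d\) is strict. Combining with \(e'\leqslant e\) from the preceding paragraph yields \(e'<d\), and in particular \(e'\ne d\), so \(y\in\mathscr{I}_{e'}e'\) with \(e'\ne d\) forces \(y\notin\mathscr{I}_dd\) (again by uniqueness of the factorization). Hence \(y\in\mathscr{J}_d\setminus\mathscr{I}_dd=\mathscr{J}_d'\). There is no real obstacle; the only thing one must be careful about is invoking Proposition~\ref{orderPresr} for right cosets rather than left cosets, which is why the inversion step is needed.
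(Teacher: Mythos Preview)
Your proof is correct and follows essentially the same approach as the paper's: write \(y\) and \(w\) in their \(W_K\times D_K^{-1}\) factorizations, invoke Proposition~\ref{orderPresr} to compare the \(D_K^{-1}\)-components, and read off the conclusion from the definitions of \(\mathscr{J}_d\) and \(\mathscr{J}_d'\). You are in fact slightly more careful than the paper, which cites Proposition~\ref{orderPresr} directly without spelling out the inversion step needed to pass from the left-coset statement to the right-coset situation.
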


\begin{proof}
Let \(y\in W_Ke\) and \(w\in W_Ke'\), where \(e,\,e'\in D_K^{-1}\). 
Since \(y\leqslant w\) it follows that \(e\leqslant e'\), by Proposition~\ref{orderPresr}.
If \(w\in\mathscr{J}_d\) then we have \(e'\leqslant d\), by the definition of
\(\mathscr{J}_d\), so that \(e\leqslant d\) and \(y\in\mathscr{I}_{e}e\subseteq\mathscr{J}_d\).
If \(w\in\mathscr{J}_d'\) then \(e'<d\), giving \(e<d\) and \(y\in\mathscr{J}_d'\).
\qed
\end{proof}
The following lemma is the key result in this section.

\begin{lemma}\label{Jd-closed}
Let \(d\in D_K^{-1}\cap\mathscr{I}\). Then \(\mathscr{S}_d\) and
\(\mathscr{S}_d'\) are both \(\mathcal{H}_K\)-submodules of
\(\mathscr{S}(\mathscr{I}\!,\,J)\).
\end{lemma}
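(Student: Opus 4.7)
The plan is to verify, for each $s\in K$ and each basis element $c_w$ with $w\in\mathscr{J}_d$ (respectively $w\in\mathscr{J}_d'$), that $T_sc_w$ lies in $\mathscr{S}_d$ (respectively $\mathscr{S}_d'$), by expanding $T_sc_w$ via Theorem~\ref{main1} and checking that every basis element appearing on the right hand side is indexed by an element of $\mathscr{J}_d$ (respectively $\mathscr{J}_d'$).

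According to Theorem~\ref{main1}, there are three cases. If $s\in\D(w)$ then $T_sc_w=-q^{-1}c_w$, and the claim is trivial. If $s\in\A(w)$ then
\[
T_sc_w=qc_w+\sum_{y\in\mathcal{R}(s,w)}\mu_{y,w}c_y\quad\text{or}\quad T_sc_w=qc_w+c_{sw}+\sum_{y\in\mathcal{R}(s,w)}\mu_{y,w}c_y,
\]
according as $s\in\WA(w)$ or $s\in\SA(w)$. Every element $y\in\mathcal{R}(s,w)$ satisfies $y<w$, so Lemma~\ref{bruhat-smaller} ensures that $y\in\mathscr{J}_d$ (respectively $y\in\mathscr{J}_d'$) whenever $w$ is. Thus the $c_w$ and $c_y$ terms pose no difficulty; the only nontrivial point is to control the single term~$c_{sw}$ that appears when $s\in\SA(w)$.

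For this, write $w=ve$ with $v\in W_K$ and $e\in D_K^{-1}$, so that $l(w)=l(v)+l(e)$ and $e\leqslant d$ (respectively $e<d$) by the definition of $\mathscr{J}_d$ (respectively $\mathscr{J}_d'$). Since $s\in K$ we have $sv\in W_K$, and since $s\in\SA(w)$ we have $sw\in\mathscr{I}$. Moreover $sw=(sv)e$ with $sv\in W_K$ and $e\in D_K^{-1}$, so $sw$ lies in the same right coset $W_Ke$ as $w$ does. Hence $sv\in\mathscr{I}_e$, giving $sw\in\mathscr{I}_ee$, which is contained in $\mathscr{J}_d$ (respectively $\mathscr{J}_d'$) because $e\leqslant d$ (respectively $e<d$). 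This places $c_{sw}$ in $\mathscr{S}_d$ (respectively $\mathscr{S}_d'$), completing the verification.

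I anticipate no real obstacle: the argument is essentially the observation that left multiplication by $s\in K$ preserves right cosets of~$W_K$, together with Lemma~\ref{bruhat-smaller} handling the lower-order terms. The only point requiring any care is making sure that when $s\in\SA(w)$ the element $sw$ lies in the correct coset and hence in~$\mathscr{I}_ee$; this is immediate once one writes $w=ve$ with $v\in W_K$.\qed
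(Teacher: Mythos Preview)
Your proof is correct and follows essentially the same approach as the paper's: expand $T_sc_w$ via Theorem~\ref{main1}, use Lemma~\ref{bruhat-smaller} to handle the terms $c_y$ with $y<w$, and observe that when $s\in\SA(w)$ the element $sw$ stays in the same right $W_K$-coset as~$w$ (since $s\in K$), hence in $\mathscr{I}_ee\subseteq\mathscr{J}_d$ (respectively~$\mathscr{J}_d'$). The paper's version is more terse but the argument is identical.
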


\begin{proof}
Let \(w\in\mathscr{J}_d\), so that \(w\in\mathscr{I_e}e=W_Kd\cap\mathscr{I}\) for some
\(e\in D_K^{-1}\) with \(e\leqslant d\), and let \(s\in K\). If \(sw\in\mathscr{I}\)
then \(sw\in\mathscr{I_e}e\subseteq\mathscr{J}_d\), since \(sw\in sW_Kd=W_Kd\).
If \(y\in\mathscr{I}\) and \(y<w\) then \(y\in\mathscr{J}_d\), by
Lemma~\ref{bruhat-smaller}. By Theorem~\ref{main1} we see that
\(T_sc_w\) is an \(\mathcal{A}\)-linear combination of terms that
all lie in  \(\{\,c_w\mid w\in \mathscr{J}_d\,\}\). So it follows that this
set spans an \(\mathcal{H}_K\)-submodule of \(\mathscr{S}(\mathscr{I}\!,\,J)\).
The proof of the other part is the same, with \(\mathscr{J}_d\) replaced
by \(\mathscr{J}_d'\).
\qed
\end{proof}
Observe that if \(d\in D_K^{-1}\cap\mathscr{I}\) and \(w\in\mathscr{J}_d\) then
\(b_w\in\mathscr{S}_d\), since \(b_w=c_w+q\sum_{y<w}q_{y,w}c_y\), and
Lemma~\ref{bruhat-smaller} shows that each \(y\) involved is in \(\mathscr{J}_d\).
The same applies with \(\mathscr{J}_d\) replaced by \(\mathscr{J}_d'\) and 
\(\mathscr{S}_d\) by \(\mathscr{S}_d'\). It follows that the sets
\(\{\,b_w\mid w\in\mathscr{J}_d\,\}\) and \(\{\,b_w\mid w\in\mathscr{J}_d'\,\}\)
are \(\mathcal{A}\)-bases of \(\mathscr{S}_d\) and \(\mathscr{S}_d'\),
and \(\{\,f(b_{wd})\mid w\in\mathscr{I}_d\,\}\) is an \(\mathcal{A}\)-basis
of \(\mathscr{S}\).

We are now able to prove the main result of this section.

\begin{theorem}\label{restrictedWGideal}
Let \((\mathscr I,J)\) be a \(W\!\)-graph ideal.
Suppose that \(K \subseteq S\) and \(d \in D_{K}^{-1} \cap \mathscr I\!\). Then
\(\mathscr{I}_d = \{v \in W_K \mid vd \in \mathscr I\}\) is a \(W_{K}\)-graph ideal
with respect to \(L = K \cap dJd^{-1}\).
\end{theorem}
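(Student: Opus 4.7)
The plan is to verify that the quotient $\mathscr{T} = \mathscr{S}_d/\mathscr{S}_d'$, already known to be an $\mathcal{H}_K$-module by Lemma~\ref{Jd-closed}, satisfies the two conditions of Definition~\ref{wgphdetelt} for the pair $(\mathscr{I}_d, L)$. Let $f \colon \mathscr{S}_d \to \mathscr{T}$ be the quotient map and take $\beta_v := f(b_{vd})$ for $v \in \mathscr{I}_d$ as the candidate standard basis; these already form an $\mathcal{A}$-basis of $\mathscr{T}$ by the preamble above. The conditions that $\mathscr{I}_d$ is an ideal of $(W_K,\leqslant\lside)$ and that $L \subseteq \Pos(\mathscr{I}_d)$ are supplied by Lemma~\ref{Id-ideal}.

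The key technical step is a descent/ascent dictionary: for $s \in K$ and $v \in \mathscr{I}_d$, I claim that $s$ has the same classification (strong/weak, ascent/descent) relative to $(\mathscr{I}_d, L)$ at $v$ as it does relative to $(\mathscr{I}, J)$ at $vd$. The length identity $l(svd) = l(sv) + l(d)$ (valid since $sv \in W_K$ and $d \in D_K^{-1}$) together with the definition of $\mathscr{I}_d$ handles the strong cases and the coarse split $sv \in \mathscr{I}_d \iff svd \in \mathscr{I}$. The delicate point is separating weak descents from weak ascents: given $sv > v$ and $sv \notin \mathscr{I}_d$, I must show $(vd)^{-1}s(vd) \in J$ iff $v^{-1}sv \in L$. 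Applying Lemma~\ref{deo1} in $W$ with $r := (vd)^{-1}s(vd) \in J$ gives $sv = v \cdot (drd^{-1})$ with $drd^{-1} \in W_K \cap dW_Jd^{-1} = W_L$; because $v \in D_L^K$, we have $l(sv) = l(v) + l(drd^{-1})$, which forces $drd^{-1}$ to have length $1$ and therefore to lie in $L$, so $v^{-1}sv \in L$ as required. The converse is symmetric.

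With this dictionary in hand, Eq.~\eqref{S_0action} for $(\mathscr{I}_d, L)$ acting on the $\beta_v$ follows by applying $f$ to Eq.~\eqref{S_0action} for $(\mathscr{I}, J)$ acting on $b_{vd}$. The strong-ascent, strong-descent, and weak-descent cases yield the prescribed formulas in $\{\beta_v\}$ immediately. In the weak-ascent case the identity $T_s b_{vd} = q b_{vd} - \sum_{\{z \in \mathscr{I}\mid z < svd\}} r^s_{z, vd}\, b_z$ needs more care: the element $svd = (sv)d$ has $D_K^{-1}$-factor equal to $d$, so by the right-coset analogue of Proposition~\ref{orderPresr} (obtained via the antiautomorphism $w \mapsto w^{-1}$) every $z \in \mathscr{I}$ with $z < svd$ has $D_K^{-1}$-factor at most~$d$, so $b_z \in \mathscr{S}_d$. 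Under $f$, such $b_z$ vanishes unless $z = v'd$ with $v' \in \mathscr{I}_d$, in which case $f(b_z) = \beta_{v'}$, and the condition $z < svd$ becomes $v' < sv$ by Lemma~\ref{cancellation}. Setting $r^s_{v',v} := r^s_{v'd, vd} \in q\mathcal{A}^+$ then completes the verification.

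For Definition~\ref{wgphdetelt}(ii), both $\mathscr{S}_d$ and $\mathscr{S}_d'$ are stable under the bar involution on $\mathscr{S}(\mathscr{I}, J)$ (each being spanned by bar-fixed $c_w$), so $\mathscr{T}$ inherits a semilinear involution satisfying $\overline{h \alpha} = \overline{h}\,\overline{\alpha}$ for all $h \in \mathcal{H}_K$ and $\alpha \in \mathscr{T}$. Finally, expanding $b_d = c_d + q\sum_{y < d,\, y \in \mathscr{I}} q_{y,d}\, c_y$ and observing that every $y$ in this sum has $D_K^{-1}$-factor strictly less than $d$ (so $y \in \mathscr{J}_d'$) yields $\beta_1 = f(b_d) = f(c_d)$, which is bar-fixed. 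The main obstacle throughout is the weak-descent identification carried out in the second paragraph; once the dictionary is established, the rest is a routine transport of the $\mathscr{S}(\mathscr{I}, J)$-structure along~$f$.
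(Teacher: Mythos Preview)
Your proof is correct and follows essentially the same approach as the paper's: both set up the quotient $\mathscr{S}_d/\mathscr{S}_d'$ with standard basis $\{f(b_{vd})\}$, verify the bar involution via bar-stability of the two submodules together with $f(b_d)=f(c_d)$, and then check Eq.~\eqref{S_0action} case by case using the dictionary between ascent/descent types at $v$ and at $vd$. The only notable variations are that you establish the weak-descent biconditional directly via the length-one argument for $drd^{-1}\in L$ (whereas the paper handles the WA direction separately using Lemma~\ref{mackey}), and that in the WA case you invoke the right-coset analogue of Proposition~\ref{orderPresr} to see $b_z\in\mathscr{S}_d$ (whereas the paper deduces this from $T_sb_{wd}\in\mathscr{S}_d$ and the fact that $\mathscr{S}_d$ is an $\mathcal{H}_K$-submodule); both routes are equally valid.
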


\begin{proof}
It was proved in Lemma~\ref{Id-ideal} that \(\mathscr{I}_d\) is
an ideal of \((W_K,\leqslant\lside)\) and that \(L\subseteq\Pos(\mathscr{I}_d)\).
We proceed to show that Definition~\ref{wgphdetelt} is satisfied with
\(\mathscr{S}\) as \(\mathscr{S}(\mathscr{I}_d,K\cap dJd^{-1})\)
and with \(\{\,f(b_{wd})\mid w\in\mathscr{I}_d\,\}\) as its standard basis
(where, as above, \(\mathscr{S}=\mathscr{S}_d/\mathscr{S}_d'\)
and \(f\colon \mathscr{S}_d\to\mathscr{S}\) is the natural map).

Note that \(f(b_d)=f(c_d)\), since \(f(c_y)=0\) for all \(y\in\mathscr{I}\) with
\(y<d\). Hence \(\overline{f(b_d)}=f(b_d)\), and since also
\[
\overline{hf(\alpha)}=\overline{f(h\alpha)}=f(\overline{h\alpha})
=f(\overline{h}\overline{\alpha})=\overline{h}f(\overline{\alpha})
=\overline{h}\,\overline{f(\alpha)}
\]
for all \(\alpha\in\mathcal{S}_d\) and \(h\in\mathcal{H}_K\), it follows that
condition (ii) in Definition~\ref{wgphdetelt} is satisfied.
It remains to check that the generators \(T_s\) of \(\mathcal{H}_K\)
act on the basis elements \(f(b_{wd})\) in accordance with the requirements
of Eq.~\eqref{S_0action}.

Let \(s\in K\) and \(w\in\mathscr{I}_d\), and suppose first that
\(s\in\SA(\mathscr{I}_d,w)\). Then \(l(sw)>l(w)\) and \(sw\in\mathscr{I}_d\).
So \(s(wd)=(sw)d\in\mathscr{I}\!\), and \(l(s(wd))=l(sw)+l(d)>l(w)+l(d)=l(wd)\).
So \(s\in\SA(\mathscr{I}\!,\,wd)\), and so \(T_sb_{wd}=b_{s(wd)}\). Applying
\(f\) to both sides gives \(T_sf(b_{wd})=f(b_{(sw)d})\), as required.

Suppose next that \(s\in\SD(\mathscr{I}_d,w)\). Then \(s\in\SA(\mathscr{I}_d,sw)\),
and by the case just done we see that
\(T_sf(b_{wd})=T_s^2f(b_{(sw)d})=(1+(q-q^{-1})T_s)f(b_{(sw)d})
=f(b_{(sw)d})+(q-q^{-1})f(b_{wd})\), as required.

Now suppose that \(s\in\WD_L(\mathscr{I}_d,w)\). This means that \(sw\notin D_{L}\),
whereas \(w\in D_{L}\). So \(l(sw)>l(w)\) and \(sw=ws'\) for some \(s'\in L\), by
Lemma~\ref{deo1}. Since the definition of \(L\) gives \(s'd=dr\) for some~\(r\in J\)
we see that \(wd\in\mathscr{I}\subseteq D_J\) but \(s(wd)=(wd)r\notin D_J\).
So \(s\in\WD(\mathscr{I}\!,\,wd)\), giving \(T_sb_{wd}=-q^{-1}b_{wd}\),
and applying \(f\) to this gives \(T_sf(b_{wd})=-q^{-1}f(b_{wd})\), as required.

Finally, suppose that \(s\in\WA_L(\mathscr{I}_d,w)\), so that
\(sw\in D_L\setminus\mathscr{I}_d\). Since \(sw\in W_K\) it follows that
\(swd\in (W_K\cap D_L)d = D_{K\cap dJd^{-1}}^Kd\subseteq D_J\), by Lemma~\ref{mackey},
since \(d\in D_{K,J}\). Furthermore, since \(sw\in W_K\) and \(sw\notin\mathscr{I}_d\)
it follows that \(swd\notin\mathscr{I}\). So \(s\in\WA_J(\mathscr{I}\!,\,wd)\),
and therefore
\begin{equation}\label{Tsbwd}
T_sb_{wd}=qb_{wd}-\sum_{y<swd}r^s_{y,wd}b_y
\qquad\text{for some \(r^s_{y,wd}\in q\mathcal{A}^+\).}
\end{equation}
Since \(T_sb_{wd}\in\mathscr{S}_d\), if \(b_y\) has nonzero coefficient
in the right hand side of Eq.~\eqref{Tsbwd} then \(y\in\mathscr{J}_d\). But
\(f(b_y)=0\) if \(y\in\mathscr{J}_d'=\mathscr{J}_d\setminus \mathscr{I}_dd\).
So applying \(f\) to Eq.~\eqref{Tsbwd} gives
\[
T_sf(b_{wd})=qf(b_{wd})-\sum_yr^s_{yd,wd}f(b_y)
\]
where the sum is over elements \(y\in\mathscr{I}_d\) such that
and \(yd<swd\). Since \(l(yd)=l(y)+l(d)\) and \(l(swd)=l(sw)+l(d)\)
it follows that \(yd<swd\) if and only if \(y<sw\) (by Lemma~\ref{cancellation}).
So
\[
T_sf(b_{wd})=qf(b_{wd})-\!\!\!\sum_{y\in\mathscr{I}_d,\,y<sw}\!\!\!r^s_{yd,wd}f(b_y)
\]
which is of the required form.\qed
\end{proof}

\begin{corollary}\label{restrictedWGdetelement}
Let \(J\) and \(K\) be subsets of \(S\) and suppose that \(w\in W\) is a \(W\!\)-graph
determining element associated with~\(J\). If \(w=vd\) with \(v\in W_K\) and
\(d\in D_K^{-1}\) then \(v\) is a \(W_K\)-graph determining element associated with
\(K\cap dJd^{-1}\).
\end{corollary}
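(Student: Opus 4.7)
The plan is to reduce the statement to Theorem~\ref{restrictedWGideal}. Let \(\mathscr{I}=\{\,u\in W\mid u\leqslant\lside w\,\}\), which is a \(W\!\)-graph ideal with respect to~\(J\) by the hypothesis that \(w\) is a \(W\!\)-graph determining element associated with~\(J\). I would first verify that \(d\in D_K^{-1}\cap\mathscr{I}\): the first membership is given, and the second holds because \(w=vd\) with \(v\in W_K\) and \(d\in D_K^{-1}\) forces \(l(w)=l(v)+l(d)\), hence \(d\leqslant\lside w\). Then Theorem~\ref{restrictedWGideal} applies and yields that
\[
\mathscr{I}_d=\{\,u\in W_K\mid ud\in\mathscr{I}\,\}
\]
is a \(W_K\)-graph ideal with respect to \(L=K\cap dJd^{-1}\).

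The remaining content of the corollary is the identification of \(\mathscr{I}_d\) with the left weak ideal \(\{\,u\in W_K\mid u\leqslant\lside v\,\}\) generated by~\(v\). This is the only real computation. For any \(u\in W_K\), the definition of the left weak order gives
\[
ud\leqslant\lside vd \iff l(vd)=l\bigl(vd(ud)^{-1}\bigr)+l(ud)=l(vu^{-1})+l(ud).
\]
Since \(u,v\in W_K\) and \(d\in D_K^{-1}\), we have \(l(vd)=l(v)+l(d)\) and \(l(ud)=l(u)+l(d)\), so this condition simplifies to \(l(v)=l(vu^{-1})+l(u)\), which is precisely \(u\leqslant\lside v\). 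Conversely, \(ud\in\mathscr{I}\) is by definition \(ud\leqslant\lside w=vd\), so the two descriptions of \(\mathscr{I}_d\) coincide.

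Combining these two steps, \(\{\,u\in W_K\mid u\leqslant\lside v\,\}=\mathscr{I}_d\) is a \(W_K\)-graph ideal with respect to \(K\cap dJd^{-1}\), which by definition means that \(v\) is a \(W_K\)-graph determining element associated with \(K\cap dJd^{-1}\). The main obstacle is really only the length-additivity bookkeeping in the step above; once that is noted the result is immediate from Theorem~\ref{restrictedWGideal}.
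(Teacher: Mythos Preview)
Your proof is correct and follows essentially the same approach as the paper: define \(\mathscr{I}=\{\,u\in W\mid u\leqslant\lside w\,\}\), observe that \(d\in D_K^{-1}\cap\mathscr{I}\), apply Theorem~\ref{restrictedWGideal}, and then identify \(\mathscr{I}_d\) with \(\{\,u\in W_K\mid u\leqslant\lside v\,\}\) via the equivalence \(ud\leqslant\lside vd\iff u\leqslant\lside v\). You spell out the length-additivity step more explicitly than the paper does, but the argument is the same.
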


\begin{proof}
Let \(\mathscr{I}=\{\,x\in W\mid x\leqslant\lside w\,\}\), so that \((\mathscr{I}\!,\,J)\) is a
\(W\!\)-graph ideal. Clearly \(d\in D_K^{-1}\cap \mathscr{I}\!\), since \(d\leqslant\lside w\),
and it follows from Theorem~\ref{restrictedWGideal} that \((\mathscr{I}_d,K\cap dJd^{-1})\)
is a  \(W_K\)-graph ideal, where \(\mathscr{I}_d=\{\,y\in W_K\mid yd\leqslant\lside w\,\}\).
But \(yd\leqslant\lside vd\) if and only if \(y\leqslant\lside v\), since \(y,\,v\in W_K\) and
\(d\in D_K^{-1}\). So
\(\mathscr{I}_d=\{\,y\in W_K\mid y\leqslant\lside v\,\}\), and the result follows. 
\qed
\end{proof}
\begin{remark}\label{doublecosetgraphs}
Let \(\Gamma=(C,\mu,\tau)=\Gamma(\mathscr{I}\!,\,J)\), the \(W\!\)-graph obtained
from the \(W\!\)-graph ideal \((\mathscr I\!,\,J)\), and let \(K\subseteq S\). By
Eq.~\eqref{mackeyRestriction} the vertex set \(C=\{\,c_w\mid w\in\mathscr{I}\,\}\) is
expressible as a disjoint union \(\bigsqcup_d C_d\), where
\(C_d=\{\,c_{wd}\mid w\in\mathscr I_d\,\}\) and \(d\) runs through
\(D_K^{-1}\cap \mathscr{I}\). Let \(\tau_K\colon C\to\mathcal P(K)\) be defined by
\(\tau_K(c)=\tau(c)\cap K\) for all \(c\in C\), so that \(\Delta=(C,\mu,\tau_K)\) is a
\(W_K\)-graph, with \(M_\Delta\) isomorphic to the restriction of \(M_\Gamma\) to
\(\mathcal H_K\). For each \(d\in D_K^{-1}\cap\mathscr{I}\) let \(\Delta_d\) be
the full subgraph of \(\Delta\) spanned by~\(C_d\). It is clear from the results
in this section that \(\Delta_d\) is a union of cells of \(\Delta\), and
spans \(W_K\)-graph isomorphic to \(\Gamma(\mathscr{I}_d,K\cap dJd^{-1})\).
\end{remark}
In particular, it follows from Remark~\ref{doublecosetgraphs} that if
\(V\) is a closed subset of \(C\) (so that \(V\) spans an \(\mathcal H\)-submodule
of \(M_\Gamma\)) then \(V\cap C_d\) is a closed subset of~\(C_d\). Hence we obtain
the following result, which is, in a sense, dual to Theorem~\ref{indwgsubideal}.

\begin{theorem}
Let \((\mathscr L,J)\) be a strong \(W\!\)-graph subideal of the \(W\!\)-graph ideal
\((\mathscr I,J)\), and let \(K\subseteq S\). For each \(d\in D_K^{-1}\cap \mathscr{L}\)
let \(\mathscr L_d=\{\,w\in W_K\mid wd\in\mathscr{L}\,\}\) and
\(\mathscr I_d=\{\,w\in W_K\mid wd\in\mathscr{I}\,\}\). Then \((\mathscr L_d,K\cap dJd^{-1})\)
is a strong \(W_K\)-graph subideal of \((\mathscr I_d,K\cap dJd^{-1})\).
\end{theorem}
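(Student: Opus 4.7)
The plan is to invoke Theorem~\ref{restrictedWGideal} twice, applying it once to $(\mathscr{I}\!,\,J)$ and once to $(\mathscr{L},J)$, and then use Remark~\ref{doublecosetgraphs} together with the fact (noted in the paragraph immediately following it) that closed sets restrict and intersect nicely with the double-coset decomposition.

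First I would note that since $\mathscr{L}\subseteq\mathscr{I}$, every $d\in D_K^{-1}\cap\mathscr{L}$ is also in $D_K^{-1}\cap\mathscr{I}$, so both $\mathscr{I}_d$ and $\mathscr{L}_d$ are defined. Writing $L=K\cap dJd^{-1}$, Theorem~\ref{restrictedWGideal} tells us that $(\mathscr{I}_d,L)$ and $(\mathscr{L}_d,L)$ are $W_K$-graph ideals, and clearly $\mathscr{L}_d\subseteq\mathscr{I}_d$. So the only remaining task is to check the strong-subideal condition, i.e.\ that $\{\,c^{(d)}_w\mid w\in\mathscr{I}_d\setminus\mathscr{L}_d\,\}$ is closed in the $W_K$-graph $\Gamma(\mathscr{I}_d,L)$, where $\{\,c^{(d)}_w\mid w\in\mathscr{I}_d\,\}$ denotes its $W_K$-graph basis.

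Next I would exploit the hypothesis that $(\mathscr{L},J)$ is strong: by Definition~\ref{strongwgsubideal} the set $V=\{\,c_w\mid w\in\mathscr{I}\setminus\mathscr{L}\,\}$ is closed in $\Gamma=\Gamma(\mathscr{I}\!,\,J)$, so $\mathcal{A}V$ is an $\mathcal{H}$-submodule of $M_\Gamma$. In particular $\mathcal{A}V$ is an $\mathcal{H}_K$-submodule, so $V$ is also closed in the $W_K$-graph $\Delta=(C,\mu,\tau_K)$ obtained by restriction (as in Remark~\ref{doublecosetgraphs}). Here closedness in $\Delta$ uses $\tau_K=\tau\cap K$, and closedness in $\Gamma$ is with respect to the larger labels $\tau$; a direct check shows that any vertex required to lie in $V$ by the $\Delta$-closed condition is automatically required by the $\Gamma$-closed condition, so the implication goes through.

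Now I would use the double-coset decomposition $C=\bigsqcup_{d}C_d$ with $C_d=\{\,c_{wd}\mid w\in\mathscr{I}_d\,\}$. The intersection $V\cap C_d=\{\,c_{wd}\mid w\in\mathscr{I}_d\setminus\mathscr{L}_d\,\}$ is closed in the full subgraph $\Delta_d$ of $\Delta$ on $C_d$, since any edge in $\Delta_d$ is already an edge of $\Delta$ with both endpoints in $C_d$. By Remark~\ref{doublecosetgraphs}, $\Delta_d$ is isomorphic as a $W_K$-graph to $\Gamma(\mathscr{I}_d,L)$ via an isomorphism identifying the vertex $c_{wd}$ with $c^{(d)}_w$. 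Under this identification $V\cap C_d$ corresponds to $\{\,c^{(d)}_w\mid w\in\mathscr{I}_d\setminus\mathscr{L}_d\,\}$, which is therefore closed in $\Gamma(\mathscr{I}_d,L)$. By Theorem~\ref{wgdetset} (or Definition~\ref{strongwgsubideal}), this is precisely what it means for $(\mathscr{L}_d,L)$ to be a strong $W_K$-graph subideal of $(\mathscr{I}_d,L)$.

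The only place where any care is needed is the transfer of closedness from $\Gamma$ to $\Delta$ and then to $\Delta_d$; but the first step is essentially the trivial observation that an $\mathcal{H}$-submodule is an $\mathcal{H}_K$-submodule, and the second step is the remark that already appears in the paper just before the statement of the theorem. No further computation should be required.
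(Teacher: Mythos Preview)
Your proposal is correct and follows essentially the same approach as the paper's proof: use the strong-subideal hypothesis to get that \(V=\{\,c_w\mid w\in\mathscr{I}\setminus\mathscr{L}\,\}\) is closed in \(\Gamma(\mathscr{I}\!,\,J)\), pass to the restricted \(W_K\)-graph, intersect with \(C_d\), and invoke the identification \(\Delta_d\cong\Gamma(\mathscr{I}_d,L)\) from Remark~\ref{doublecosetgraphs} together with Definition~\ref{strongwgsubideal} and Theorem~\ref{wgdetset}. The paper's proof is terser (it does not separately invoke Theorem~\ref{restrictedWGideal} on \((\mathscr{L},J)\), since that conclusion is already contained in Theorem~\ref{wgdetset} once the complement is known to be closed), but the logical content is the same.
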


\begin{proof} Definition~\ref{strongwgsubideal} and Theorem~\ref{wgdetset} show that
\(\Gamma(\mathscr{L},J)\) can be identified with the full subgraph of
\(\Gamma(\mathscr{I}\!,\,J)\) spanned by \(\{\,c_w\mid w\in\mathscr{L}\,\}\),
and that \(V=\{\,c_w\mid w\in\mathscr{I}\setminus\mathscr{L}\,\}\) is a closed
subset of~\(C\). Hence \(V\cap C_d\) is a closed subset of \(C_d\).
Since \(V\cap C_d=\{\,c_{wd}\mid w\in\mathscr{I}_d\setminus\mathscr{L}_d\,\}\), the
result follows immediately from Definition~\ref{strongwgsubideal} and Theorem~\ref{wgdetset}.
\qed

\end{proof}

\section{\textit{W-}graph ideals for Coxeter groups of rank~2}
\label{sec:ext}

Our main aim in this section is to determine all \(W\!\)-graph ideals for
finite Coxeter groups of rank~2. Accordingly, we assume henceforth that 
\(W\) is the group generated by \(S = \{s,t\}\) subject to the defining relations
\(s^2 = t^2 = (st)^m = 1\), where \(m\geqslant 2\).

\begin{notation}
Whenever \(x\) and \(y\) are elements of a semigroup we define
\([..xy]_k\) to be \((xy)^{k/2}\) if \(k\) is even and to be \(y(xy)^{(k-1)/2}\)
if \(k\) is odd.
\end{notation}
Using this notation, \([..st]_m=[..ts]_m\) is the longest element of~\(W\),
and every other element of~\(W\) has a unique expression of the form 
\([..st]_l\) or \([..ts]_l\) with \(l<m\). Note that
\begin{align*}
D_{\{s\}} &= \{\,[..st]_l\mid l<m\,\},\\
D_{\{t\}} &= \{\,[..ts]_l\mid l<m\,\}.
\end{align*}

We assume henceforth that that \(J\subseteq S\) and that
\(\emptyset\ne\mathscr{I}\subseteq D_J\) is an ideal of \((W,\leqslant\lside)\).
Recall from \cite[Section 8]{howvan:wgraphDetSets} that \((\mathscr{I},J)\)
is a \(W\!\)-graph ideal if \(\mathscr{I} = D_J\), and note that if
\(J=\{s,t\}\) then \(D_{\{s,t\}}=\{1\}\), forcing \(\mathscr{I} = D_J\).

Suppose now that \(J=\{s\}\), and note that we must have
\[
\mathscr{I}=\{\,[..st]_l\mid l\leqslant k\,\}
\]
for some integer~\(k\) with \(0\leqslant k\leqslant m-1\). Let \(w\) be an arbitrary element
of~\(\mathscr{I}\) and let \(l(w)=l\). If \(l=0\) then \(sw=s\notin D_{\{s\}}\) and
\(w<tw=t\in\mathscr{I}\), giving \(s\in\WD(w)\) and \(t\in \SA(w)\).  If \(0<l<k\) then
\(\{sw,tw\}=\{[..st]_{l-1},[..st]_{l+1}\}\subset\mathscr{I}\); so \(s\in\SD(w)\)
and \(t\in\SA(w)\) if \(l\) is even, \(s\in\SA(w)\) and \(t\in\SD(w)\) if \(l\) is odd.
If \(l=k<m-1\) the same conclusion holds with \(\SA(w)\) replaced by \(\WA(w)\), since in
this case \([..st]_{l+1}\in D_{\{s\}}\setminus\mathscr{I}\!\). If \(l=k=m-1\), which means
that \(\mathscr{I}=D_{\{s\}}\), then \(s\in\SD(w)\) and \(t\in\WD(w)\) if \(l\) is
even, vice versa if \(l\) is odd.

It is now relatively straightforward to use (A3) and (A4) of Section~\ref{sec:5} to
compute the polynomials~\(q_{y,z}\) for \((\mathscr{I}\!,\,J)=(\mathscr{I}\!,\,\{s\})\).

\begin{lemma}\label{qyz for J=s}
With \(\mathscr{I}\) and \(J\) as above, suppose that \(y,\,z\in\mathscr{I}\) with
\(l(y)<l(z)\). Then
\[q_{y,z}=
\begin{cases} 1&\text{if \(l(z)-l(y)=1\),}\\
0&\text{if \(l(z)-l(y)>1\)}.
\end{cases}
\]
\end{lemma}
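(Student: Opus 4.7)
The plan is to proceed by induction on $l(z)$. The case $l(z)=1$ forces $z=t$ and $y=1$, so $q_{1,t}=1$ by the base clause of Corollary~\ref{recursion}. For the inductive step, fix $z=[..st]_l$ with $l\geqslant 2$. Since $\mathscr{I}$ contains exactly one element of each length $0,1,\ldots,k$, the preceding case analysis of descents and ascents on $\mathscr{I}$ shows that $z$ has a unique strong descent in $S$, namely $s'=s$ if $l$ is even and $s'=t$ if $l$ is odd, and $s'z=[..st]_{l-1}$ is the unique element of $\mathscr{I}$ of length $l-1$.

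If $l(y)=l-1$ then $y$ must equal $s'z$, and $q_{y,z}=q_{s'z,z}=1$ by the first clause of Corollary~\ref{recursion}. Assume from now on that $l(y)\leqslant l-2$ and write $y=[..st]_j$. A direct case analysis on the parities of $j$ and $l$, using the constraint $j<l\leqslant k\leqslant m-1$ to avoid the edge case at the top of $\mathscr{I}$, shows that $s'\in\A(y)$ precisely when $l-j$ is odd, and $s'\in\D(y)$ precisely when $l-j$ is even. More specifically, when $l-j$ is even, $s'\in\SD(y)$ if $j\geqslant 1$, and $s'\in\WD(y)$ if $j=0$ (which forces $l$ even and $s'=s$).

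If $l-j$ is odd then $l-j\geqslant 3$, so $l(s'z)-l(y)=l-1-j\geqslant 2$, and clause~(1) of~(A4) combined with the inductive hypothesis gives $q_{y,z}=qq_{y,s'z}=0$. If $l-j$ is even, apply clause~(2) or~(3) of~(A4) according as $s'\in\SD(y)$ or $s'\in\WD(y)$, and check termwise that the right-hand side vanishes. The term $-q^{-1}(q_{y,s'z}-\mu_{y,s'z})$ is zero because by induction either $q_{y,s'z}=0$ (so $\mu_{y,s'z}=0$) or $l-j=2$ and $q_{y,s'z}=1=\mu_{y,s'z}$. The term $q_{s'y,s'z}$ appearing in clause~(2) vanishes by induction, since $l(s'z)-l(s'y)=l-j\geqslant 2$. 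Finally, the sum $\sum_x\mu_{y,x}q_{x,s'z}$ vanishes because induction forces $\mu_{y,x}\ne 0$ only for $x=[..st]_{j+1}$; for this~$x$, either $l-j=2$ and $x=s'z$ (excluded by the strict inequality $x<s'z$), or $l-j\geqslant 4$ and $l(s'z)-l(x)=l-j-2\geqslant 2$, again giving $q_{x,s'z}=0$ by induction.

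The main obstacle is simply bookkeeping: one must carefully track the parities of $j$ and $l$ throughout the argument, so that whenever a term in the recursion could in principle be nonzero, the parity constraint pushes it into the vanishing branch of the inductive hypothesis. No ideas are needed beyond the recursion in Corollary~\ref{recursion} and the uniformly alternating structure of~$\mathscr{I}$.
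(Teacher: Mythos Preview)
Your proof is correct and follows essentially the same approach as the paper's: induction on $l(z)$, identification of the unique strong descent $s'$ of~$z$, and then a parity-based case analysis using the recursion of Corollary~\ref{recursion}. The only cosmetic difference is that the paper organises the cases by the parity of $l(z)$ and $l(y)$ separately (doing $l(z)$ even explicitly and leaving $l(z)$ odd to symmetry), whereas you organise them by the parity of $l-j$ and handle both values of~$s'$ at once; the underlying verifications are identical.
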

\begin{proof}
The proof proceeds by induction on \(l(z)\). If \(l(z)=1\) then \(z=t\) and \(y=1\),
and~(A4) immediately gives \(q_{y,z}=1\), as required. 

For the inductive step, suppose first that \(l(z)\) is even. Then \(s\in\D(z)\),
and \(sz\) is the only element of~\(\mathscr{I}\) whose length is~\(l(z)-1\). Since
(A4) immediately gives \(q_{sz,z}=1\), it suffices to prove that \(q_{y,z}=0\) if
\(l(y)<l(z)-1\).

If \(l(y)\) is odd then \(s\in\A(y)\), and \(l(y)<l(z)-1\) gives \(l(y)\leqslant l(z)-3<l(sz)-1\).
So the inductive hypothesis gives \(q_{y,sz}=0\), and by (1) of (A4) it follows that 
\(q_{y,z}=qq_{y,sz}=0\).

Assume now that \(l(y)\) is even, so that \(s\in \D(y)\). Since \(l(y)\leqslant l(z)-2<l(sz)\)
the inductive hypothesis tells us that \(q_{y,sz}\) is a constant, and so
\(q^{-1}(q_{y,sz}-\mu_{y,sz})=0\). If \(s\in\SD(y)\) then
\(l(sy)=l(y)-1<l(z)-1=l(sz)\), and the inductive hypothesis gives \(q_{sy,sz}=0\).
So whether \(s\in\SD(w)\) or \(s\in\SA(w)\) we have
\(q_{y,z} = \sum_x\mu_{y,x}q_{x,sz}\), where the sum extends over \(x\in \mathscr I\)
such that \(y<x<sz\) and \(s\notin\D(x)\). But \(s\notin\D(x)\) implies that \(l(x)\)
is even, giving \(l(x)<l(sz)-1\), since \(l(sz)\) is also even. Since this gives
\(q_{x,sz}=0\) by the inductive hypothesis it follows that all the terms in the
sum are~0, and \(q_{y,z}=0\), as required.

If \(l(z)\) odd then the same proof applies, with odd and even swapped and
with \(s\) replaced by~\(t\). This completes the induction.
\qed
\end{proof}
It follows from Lemma~\ref{qyz for J=s} and the discussion preceding it that if
\(k<m-1\) then the construction produces a graph of the form
\[
\begin{tikzpicture}[scale=0.30][font=\small]
\draw (0,0) circle [radius=0.6];
\node at (0,0) {\(s\)};
\draw [<->] (0.7,0) --(2.3,0);
\draw (3,0) circle [radius=0.6];
\node at (3,0) {\(t\)};
\draw [<->] (3.7,0) --(5.3,0);
\draw (6,0) circle [radius=0.6];
\node at (6,0) {\(s\)};
\draw [<->] (6.7,0) --(8.3,0);
\draw (9,0) circle [radius=0.6];
\node at (9,0) {\(t\)};
\draw [<->] (9.7,0) --(11.3,0);
\node at (12,0) {\(\cdots\)};
\end{tikzpicture}
\]
where the number of vertices is \(k+1\) and all edges have weight~1. In other
words, if we let \(V=\{v_1,v_2, \ldots, v_{k+1}\}\) be the vertex set, then 
\(\tau\colon V\to\powerset(S)\) is given by
\[
\tau(v_i)=\begin{cases}
\{s\}&\text{if \(i\) is odd,}\\
\{t\}&\text{if \(i\) is even,}
\end{cases}
\]
and the integer \(\mu(v_i,v_j)\)
is~\(1\) whenever \(|i-j|=1\) and is \(0\) whenever \(|i-j|>1\). It follows from
Theorem~\ref{converseMain} that \((\mathscr{I}\!,\,J)\) is a \(W\!\)-graph ideal
if and only if \(\Gamma=(V,\mu,\tau)\) is a \(W\!\)-graph.

Note that if \(k=m-2\) then \(\mathscr{I}=D_J\setminus\{[..st]_{m-1}\}\). In this
case it follows from results already obtained \((\mathscr{I}\!,\,J)\) is a
\(W\!\)-graph ideal. Indeed, we saw in Section~\ref{sec:4} that \((D_J,J)\) is a \(W\!\)-graph
ideal, and since \(\D([..st]_{m-1})=\{s,t\}\) (as noted in the discussion above), it
follows that the set \(\{[..st]_{m-1}\}\) is \((D_J,J)\)-closed. Hence
\((D_J\setminus\{[..st]_{m-1}\},J)\)  is a strong \(W\!\)-graph subideal of~\((D_J,J)\).

The next lemma shows that \((V,\mu,\tau)\) is a \(W\!\)-graph if and only if \(k+2\)
is a divisor of~\(m\).

\begin{lemma}\label{irreducibleRepI2m}
Let \(M\) be a free \(\mathcal{A}\)-module with \(\mathcal{A}\)-basis
\(V = \{v_1, \ldots, v_{k+1}\}\), where \(k\geqslant 0\), and for each \(r\in\{s,t\}\)
let \(\phi_r\colon M\to M\) be the \(\mathcal{A}\)-homomorphism
satisfying
\[
\phi_r(v_i)=\begin{cases}
-q^{-1}v_i&\text{ if \(\tau(v_i)=\{r\}\)}\\
qv_i+\sum\limits_{j\in\mathcal{R}_i}v_j&\text{ if \(\tau(v_i)\ne\{r\}\)}
\end{cases}
\]
where \(\mathcal{R}_i=\{i-1,i+1\}\cap\{1,2,\ldots,k+1\}\). Then the relation
\(\phi_r^2=1+(q-q^{-1})\phi_r\) is satisfied for both values of \(r\in\{s,t\}\),
and \([..\phi_s\phi_t]_n=[..\phi_t\phi_s]_n\) if and only if \(n\)
is a multiple of \(k+2\).
\end{lemma}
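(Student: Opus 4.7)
The plan is to verify the quadratic relation by case analysis on basis vectors, and then to prove the braid relation criterion by establishing that the operator \(U := \phi_s\phi_t\) has order exactly \(k+2\) on \(M\).

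For the quadratic relation \(\phi_r^2 = 1 + (q-q^{-1})\phi_r\), if \(\tau(v_i)=\{r\}\) then \(\phi_r(v_i)=-q^{-1}v_i\), and both sides send \(v_i\) to \(q^{-2}v_i\). Otherwise \(\phi_r(v_i) = qv_i+\sum_{j\in\mathcal{R}_i}v_j\), where each \(v_j\) in the sum satisfies \(\tau(v_j)=\{r\}\) (since adjacent indices have opposite parity), hence \(\phi_r(v_j)=-q^{-1}v_j\). Expanding gives \(\phi_r^2(v_i) = q^2v_i + (q-q^{-1})\sum_j v_j\), which matches \((1+(q-q^{-1})\phi_r)(v_i)\); the boundary cases \(i\in\{1,k+1\}\), in which \(\mathcal{R}_i\) has just one element, are handled identically.

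For the braid relation, set \(V := \phi_t\phi_s\). The associativity identity \((\phi_s\phi_t)\phi_s = \phi_s(\phi_t\phi_s)\) gives \(U\phi_s = \phi_s V\), and iterating yields \(U^p\phi_s = \phi_s V^p\) for all \(p\geqslant 0\). Since \([..\phi_s\phi_t]_{2p} = U^p\) and \([..\phi_t\phi_s]_{2p} = V^p\), the even-length braid relation becomes \(U^p = V^p\); the analogous derivation for odd lengths, using \(\phi_tU^p = V^p\phi_t\), reduces \([..\phi_s\phi_t]_{2p+1} = [..\phi_t\phi_s]_{2p+1}\) to \(V^p\phi_t = U^p\phi_s\). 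The central technical claim I want to prove is that \(U\) has order exactly \(k+2\) on \(M\), with minimal polynomial \((x^{k+2}-1)/(x-1)\); granted this (and, by conjugation, the analogous statement for \(V\)), a spectral analysis translates both conditions above into the single divisibility criterion \((k+2)\mid n\).

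The main obstacle will be establishing the operator identity \(U^{k+2} = 1\) on all of \(M\). By induction on \(n\), one obtains the chain \([..\phi_s\phi_t]_n(v_1) = qv_n+v_{n+1}\) for \(1\leqslant n\leqslant k\), then \([..\phi_s\phi_t]_{k+1}(v_1) = qv_{k+1}\), \([..\phi_s\phi_t]_{k+2}(v_1) = -v_{k+1}\), and, after the recursion reverses, \([..\phi_s\phi_t]_{2(k+2)}(v_1) = v_1\); this confirms \(U^{k+2}(v_1) = v_1\) on the cyclic generator. Passing from this to an operator identity on all of \(M\) cannot be done by cyclicity alone (annihilating \(v_1\) is strictly weaker than annihilating \(M\)), so I plan to compute the characteristic polynomial of \(U\) directly. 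The near-tridiagonal structure of \(U\) in the basis \(v_1,\ldots,v_{k+1}\), together with the reversal symmetry \(\sigma\colon v_i\mapsto v_{k+2-i}\) (which centralizes the action when \(k\) is even and intertwines \(\phi_s,\phi_t\) when \(k\) is odd), should support a recursion on \(k\) producing \(\det(xI-U) = 1+x+\cdots+x^{k+1}\). Cayley--Hamilton then delivers \(U^{k+2} = 1\), after which the commutant analysis from the previous paragraph completes both directions of the biconditional.
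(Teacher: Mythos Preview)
Your verification of the quadratic relation is fine and matches the paper. The difficulty is the braid-relation biconditional, and here your plan has a genuine gap.

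The central claim you rely on is that knowing the order (or minimal polynomial) of $U=\phi_s\phi_t$ determines when $[..\phi_s\phi_t]_n=[..\phi_t\phi_s]_n$. But these are different questions. For $n=2p$ the braid relation reads $U^p=V^p$ with $V=\phi_s^{-1}U\phi_s$, which is equivalent to $[\phi_s,U^p]=0$; for $n=2p+1$ it reads $\phi_tU^p=U^p\phi_s$. Neither condition is a statement about powers of $U$ alone, so $U^{k+2}=I$ by itself (even together with the analogous fact for $V$) does not give the ``if'' direction at $n=k+2$. Your ``spectral analysis'' sentence does not explain how knowing the eigenvalues of $U$ and $V$ forces these commutation-type relations; in fact two conjugate operators with the same simple spectrum can perfectly well satisfy $U^p\ne V^p$ for $p<k+2$. (At $q=1$ one has $V=U^{-1}$ and the braid relation collapses to $U^n=I$, which \emph{does} give the ``only if'' direction cleanly; but specialization cannot establish the ``if'' direction for generic $q$.) So as written, the ``if'' half is not proved.

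The paper takes a different route on both halves. For ``if'' it does not compute $U^{k+2}$ at all: it observes that the case $m=k+2$ was already shown, just before this lemma, to arise from the strong $W$-graph subideal $(D_{\{s\}}\setminus\{[..st]_{m-1}\},\{s\})$, so the Hecke braid relation $[..T_sT_t]_{k+2}=[..T_tT_s]_{k+2}$ holds on $M$ for free; multiples follow. For ``only if'' it does not compute a characteristic polynomial either: it writes down two explicit vectors (built from a primitive $2(k{+}2)$-th root of unity) spanning a $2$-dimensional $\{\phi_s,\phi_t\}$-invariant subspace, computes the $2\times2$ matrices $F_s,F_t$ there, specializes to $q=1$, and reads off that $F_sF_t$ has eigenvalues $\zeta^{\pm2}$, forcing $(k+2)\mid n$. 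Compared with your plan, the paper's ``if'' is shorter because it cashes in earlier work, and its ``only if'' avoids the full $(k{+}1)\times(k{+}1)$ determinant recursion by passing to a hand-picked $2$-dimensional subrepresentation. Your characteristic-polynomial idea is correct (indeed $\det(xI-U)=1+x+\cdots+x^{k+1}$, as one checks for small $k$), and it \emph{could} be pushed through---for instance by proving the stronger identity $[..\phi_s\phi_t]_{k+2}=-\sigma$, which then visibly equals $[..\phi_t\phi_s]_{k+2}$ in both parities of $k$---but that extra step is precisely what is missing from your outline.
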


\begin{proof}
Observe that if \(\tau(v_i)\ne\{r\}\) then \(\tau(v_j)=\{r\}\) for all
\(j\in\mathcal{R}_i\). It follows by a trivial calculation that
\(\phi_r^2=1+(q-q^{-1})\phi_r\).

If \(m=k+2\) then \(M\) is isomorphic to the \(\mathcal{H}\)-module \(M_\Gamma\),
where \(\Gamma=\Gamma(D_J\setminus\{[..st]_{m-1}\},J)\), with
\(T_s\) acting via \(\phi_s\) and \(T_t\) acting via \(\phi_t\).
Hence \([..\phi_s\phi_t]_{k+2}=[..\phi_t\phi_s]_{k+2}\). It follows
from this that also \([..\phi_s\phi_t]_n=[..\phi_t\phi_s]_n\) whenever
\(n\) is a multiple of~\(k+2\). It remains to prove the converse:
if \([..\phi_s\phi_t]_n=[..\phi_t\phi_s]_n\) then \(n\)
is a multiple of~\(k+2\).

So assume that \([..\phi_s\phi_t]_n=[..\phi_t\phi_s]_n\). If \(k=0\) then
\(\phi_s(v_1)=-q^{-1}v_1\) and \(\phi_t(v_1)=qv_1\), and it follows that
if \(n=2l+1\) is odd then
\([..\phi_s\phi_t]_n=(-1)^l\phi_s\ne(-1)^l\phi_t=[..\phi_t\phi_s]_n\),
contrary to our hypothesis. So \(n\) is even, as required.

Assume now that \(k\geqslant 1\). It is convenient to regard \(M\) as embedded in a
\(\mathbb{C}[q,q^{-1}]\)-module with basis~\(V\), and extend \(\phi_s\) and
\(\phi_t\) to \(\mathbb{C}[q,q^{-1}]\)-endomorphisms of this module. 
Let \(\zeta\) be a primitive \(2(k+2)\)-th root of unity, and write
\(\theta_k=\zeta^k-\zeta^{-k}\) for all integers~\(k\).

Define
\(u_1 = \sum_{i\in O}\theta_i v_i\) and \(u_2 = \sum_{i\in E}\theta_i v_i\),
where \(O\) and \(E\) are respectively the set of odd integers in \(\{1,2,\ldots,k+1\}\)
and the set of even integers in \(\{1,2,\ldots,k+1\}\). It is easily seen that
\(\phi_s(u_1) = -q^{-1}u_1\) and \(\phi_t(u_2) = -q^{-1}u_2\), while
\begin{align*}
\phi_s(u_2)&=qu_2 + \sum_{i\in O}(\theta_{i+1}-\theta_{i-1})v_i\\
\phi_t(u_1)&=qu_1 + \sum_{i\in E}(\theta_{i+1}-\theta_{i-1})v_i
\end{align*}
since \(\theta_0=\theta_{k+2}=0\). Now since
\(\theta_{i+1}-\theta_{i-1}=(\zeta+\zeta^{-1})\theta_i\) it follows that
the two-dimensional submodule spanned by \(\{u_1,u_2\}\) is preserved by
both \(\phi_s\) and \(\phi_t\), which act via the the following two
matrices:
\[
F_s=\begin{pmatrix}
-q^{-1}&\ \zeta+\zeta^{-1}\\
0&q\
\end{pmatrix},
\qquad
F_t=\begin{pmatrix}
q\ \ &0\\
\zeta+\zeta^{-1}&\ -q^{-1}
\end{pmatrix}.
\]
Since \([..\phi_s\phi_t]_n=[..\phi_t\phi_s]_n\) it follows that
\([..F_sF_t]_n=[..F_tF_s]_n\). This must remain
valid on specializing to~\(q=1\), in which case \(F_s^2=F_t^2=1\)
and \((F_sF_t)^n=([..F_tF_s]_n)^{-1}[..F_sF_t]_n=1\).
But since
\[
F_sF_t=\begin{pmatrix}
\zeta^2+\zeta^{-2}+1&\ -(\zeta+\zeta^{-1})q^{-1}\\
(\zeta+\zeta^{-1})q&-1
\end{pmatrix}
\]
and the eigenvalues of this are \(\zeta^2\) and \(\zeta^{-2}\), it follows
that \((\zeta^2)^n=1\). Since \(\zeta^2\) is a primitive \((k+2)\)-th root
of~1 we conclude that \(k+2\) is a divisor of~\(n\), as required.\qed
\end{proof}
Suppose now that \(J=\emptyset\), so that \(D_J=W\). Since we know that
\((W,\emptyset)\) is a \(W\!\)-graph ideal, we assume that \(\mathscr{I}\)
is an ideal of \((W,\leqslant\lside)\) such that \(\mathscr{I}\ne W\). Then
\[
\mathscr{I}=\mathscr{I}_{h,k}=\{\,[..st]_l\mid l\leqslant h\,\}
\cup\{\,[..ts]_l\mid l\leqslant k\,\}
\]
for some \(h,\,k\in\{0,1,2,\ldots,m-1\}\). Since \(D_J=W\) there are no weak
descents. So \(\D(1)=\emptyset\), and for every other \(w\in\mathscr{I}\)
we have either \(\D(w)=\{s\}\) (if the reduced expression for~\(w\) starts
with~\(s\)) or \(\D(w)=\{t\}\) (if it starts with~\(t\)). 

For the purposes of applying Theorem~\ref{converseMain} we need to find
the integers \(\mu_{y,w}\) that appear in (A2) of Section~\ref{sec:5}. This
means that \(y<w\) and \(\D(y)\nsubseteq\D(w)\). Clearly we may as well assume 
that \(\D(w)=\{s\}\) and \(\D(y)=\{t\}\).

\begin{lemma}\label{mu for Ihk}
Let \(\mathscr{I}=\mathscr{I}_{k,h}\) (as defined above) and let
\(J=\emptyset\). Let \(y,\,w\) be elements of \(\mathscr{I}\) with
\(\D_J(\mathscr{I}\!,\,w)=\{s\}\)
and \(\D_J(\mathscr{I}\!,\,y)=\{t\}\), and \(0<l(y)<l(w)\). Then 
\(\mu_{y,w}=1\) if \(l(w)-l(y)=1\), and \(\mu_{y,w}=0\) otherwise.
\end{lemma}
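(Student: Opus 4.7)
The plan is to compute $\mu_{y,w}$, which by definition is the constant term of $q_{y,w}$, using the recursive formulas of Corollary~\ref{recursion} (equivalently, (A4) of Section~\ref{sec:5}). These formulas determine the polynomials $q_{y,w}$ without presupposing that $(\mathscr{I}\!,\,J)$ is a $W\!$-graph ideal. Since $J=\emptyset$, there are no weak ascents or descents: $\D=\SD$ and $\A=\SA$ throughout, and every non-identity element of $\mathscr{I}$ has a unique (strong) descent, so the recursion is applied unambiguously.

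The structural fact about the dihedral group that drives the argument is that for each $l$ with $1\leqslant l\leqslant m-1$ there are exactly two elements of length~$l$, one starting with~$s$ and one starting with~$t$; and for any $v\in W$ that is not the longest element, the Bruhat interval $[1,v]$ equals $\{\,u\mid l(u)<l(v)\,\}\cup\{v\}$. Since $\D(w)=\{s\}$ forces the (unique) reduced expression of~$w$ to start with~$s$, the element $sw$ is the unique element of length $l(w)-1$ starting with~$t$; in particular, any $u\in W$ with $l(u)<l(sw)$ automatically satisfies $u<sw$.

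With these preliminaries the three cases are immediate. First, if $l(w)-l(y)=1$, the structural observation forces $y=sw$; since the unique descent of~$w$ is~$s$, the recursion is applied at~$w$ using this~$s$, and the boundary case of Corollary~\ref{recursion} gives $q_{sw,w}=1$, so $\mu_{y,w}=1$. Second, if $l(w)-l(y)$ is even, Corollary~\ref{polringqyw} (whose proof uses only the recursion) yields $\mu_{y,w}=0$. Third, if $l(w)-l(y)$ is odd and at least~$3$, then $l(y)\leqslant l(w)-3<l(sw)$ gives $y<sw$; since $\D(y)=\{t\}$, we have $s\in\A(y)$, and formula~(i) of Corollary~\ref{recursion} gives $q_{y,w}=q\,q_{y,sw}$, which has constant term~$0$, so $\mu_{y,w}=0$.

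I anticipate no real obstacle: the argument is a short case analysis that leverages the very restrictive subword structure of~$I_2(m)$ to reduce the recursion to either a boundary identity or the single formula~(i). The only point worth checking carefully is the unique-element claim in the dihedral Bruhat interval, which pins down $y=sw$ in the first case and ensures $y<sw$ in the third.
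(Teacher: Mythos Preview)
Your proof is correct and follows essentially the same route as the paper's. Two small remarks. First, the claim that there are no weak ascents is false: since $J=\emptyset$ gives $D_J=W$, the set $\WA_J(\mathscr{I},y)=\{\,r\in S\mid ry>y\text{ and }ry\notin\mathscr{I}\,\}$ can certainly be nonempty; this is harmless, however, because formula~(i) of Corollary~\ref{recursion} applies whenever $s\in\A(y)$, strong or weak. Second, your parity split is unnecessary: since $\D(y)=\{t\}$ already forces $s\in\A(y)$ regardless of the parity of $l(w)-l(y)$, the paper handles all cases $l(w)-l(y)>1$ uniformly by a single application of formula~(i), obtaining $q_{y,w}=qq_{y,sw}$ with zero constant term directly, without the detour through Corollary~\ref{polringqyw}.
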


\begin{proof}
If \(l(w)-l(y)=1\) then \(y=sw\), and it is immediate from (A4) of
Section~\ref{sec:5} that \(\mu_{y,w}=q_{y,w}=1\). If \(l(w)-l(y)>1\)
then case~(1) of (A4) applies, since \(s\in\A(y)\), and so
\(q_{y,w}=qq_{y,sw}\). So the constant term of \(q_{y,w}\) is
zero, as required.\qed
\end{proof}
So, after removing superfluous edges, the graph produced by application
of our algorithm to \((\mathscr{I}_{h,k},\emptyset)\) has the form
\[
\begin{tikzpicture}[scale=0.3][font=\small]
\draw (0,0) circle [radius=0.6];
\node at (0,0) {\(t\)};
\draw [<->] (0.7,0) --(2.3,0);
\draw (3,0) circle [radius=0.6];
\node at (3,0) {\(s\)};
\draw [<->] (3.7,0) --(5.3,0);
\draw (6,0) circle [radius=0.6];
\node at (6,0) {\(t\)};
\draw [<->] (6.7,0) --(8.3,0);
\node at (9,0) {\(\cdots\)};
\draw (0,2) circle [radius=0.6];
\node at (0,2) {\(s\)};
\draw [<->] (0.7,2) --(2.3,2);
\draw (3,2) circle [radius=0.6];
\node at (3,2) {\(t\)};
\draw [<->] (3.7,2) --(5.3,2);
\draw (6,2) circle [radius=0.6];
\node at (6,2) {\(s\)};
\draw [<->] (6.7,2) --(8.3,2);
\node at (9,2) {\(\cdots\)};
\draw(-3,1) circle [radius=0.6];
\draw [->] (-2.336,0.779) --(-0.664,0.221);
\draw [->] (-2.336,1.221) --(-0.664,1.779);
\end{tikzpicture}
\]
where there are \(h+k+1\) vertices, \(k\) in the
top row and \(h\) in the bottom row, and all edges have weight~1.
In other words, if we let
\(V=\{\,v_i\mid 1\leqslant i\leqslant k\,\}\cup\{x\}\cup\{\,u_i\mid 1\leqslant i\leqslant h\,\}\)
be the vertex set, where the \(v_i\) correspond to the top row and
the \(u_i\) to the bottom row, and temporarily 
let \(v_0=x\) and \(v_{-i}=u_i\) for \(1\leqslant i\leqslant h\), then \(\tau\colon V\to\powerset(S)\)
is given by \(\tau(v_0)=\emptyset\) and
\[
\tau(v_i)=\begin{cases}
\{s\}&\text{if \(i\) is odd and positive or even and negative,}\\
\{t\}&\text{if \(i\) is even and positive or odd and negative,}
\end{cases}
\]
and the integer \(\mu(v_i,v_j)\)
is~\(1\) whenever \(|i-j|=1\) and is \(0\) whenever \(|i-j|>1\). It follows from
Theorem~\ref{converseMain} that \((\mathscr{I}_{h,k},\emptyset)\) is a
\(W\!\)-graph ideal if and only if \(\Gamma=(V,\mu,\tau)\) is a~\(W\!\)-graph.

Note that in the particular case \(h=0\) and \(k=m-1\) we have
\(\mathscr{I}_{h,k}=D_{\{s\}}\), and it
follows from \cite[Proposition~8.3]{howvan:wgraphDetSets} that
\((\mathscr{I}_{h,k},\emptyset)\) is a \(W\!\)-graph ideal.

Our next lemma shows that, in the general case, \((\mathscr{I}_{h,k},\emptyset)\)
is a \(W\!\)-graph ideal if and only if \(h+1\) and \(k+1\) are both
divisors of~\(m\).

\begin{lemma}
Let \(M\) be a free \(\mathcal{A}\)-module with \(\mathcal{A}\)-basis
\(\{x\}\sqcup\{u_1,u_2,\ldots,u_h\}\sqcup\{v_1,v_2,\ldots,v_k\}\), and
put \(u_0=v_0=u_{h+1}=v_{k+1}=0\). Let \(\phi_s\) and \(\phi_t\) be
\(\mathcal{A}\)-endomorphisms  of \(M\) satisfying the following rules:
\begin{itemize}[topsep=1 pt]
\item[\textup{(i)}]\(\phi_s(x)=qx+v_1\) and \(\phi_t(x)=qx+u_1\),
\item[\textup{(ii)}]\(\phi_s(v_i)=-q^{-1}v_i\) if \(i\) is odd, and
\(\phi_s(v_i)=qv_i+v_{i-1}+v_{i+1}\) if \(i\) is even,
\item[\textup{(iii)}]\(\phi_s(u_i)=-q^{-1}u_i\) if \(i\) is even, and
\(\phi_s(u_i)=qu_i+u_{i-1}+u_{i+1}\) if \(i\) is odd,
\item[\textup{(iv)}]\(\phi_t(v_i)=-q^{-1}v_i\) if \(i\) is even, and
\(\phi_t(v_i)=qv_i+v_{i-1}+v_{i+1}\) if \(i\) is odd,
\item[\textup{(v)}]\(\phi_t(u_i)=-q^{-1}u_i\) if \(i\) is odd, and
\(\phi_t(u_i)=qu_i+u_{i-1}+u_{i+1}\) if \(i\) is even.
\end{itemize}
Then \(\phi_s^2=1+(q-q^{-1})\phi_s\) and \(\phi_t^2=1+(q-q^{-1})\phi_t\),
and \([..\phi_s\phi_t]_n=[..\phi_t\phi_s]_n\) if and only if \(h+1\)
and \(k+1\) are both divisors of~\(n\).
\end{lemma}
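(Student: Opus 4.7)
The plan is to verify the quadratic relations by case analysis, reduce the braid relation to the invariant submodule structure, and then close the gap on the generator $x$ via a joint-eigenvector argument over the field of fractions.

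First I would check $\phi_r^2=1+(q-q^{-1})\phi_r$ for $r\in\{s,t\}$ on each basis vector. On $u_i$ and $v_j$ the computation is identical to the one in Lemma~\ref{irreducibleRepI2m}; on $x$ one finds $\phi_s^2(x)=\phi_s(qx+v_1)=q^2x+(q-q^{-1})v_1=x+(q-q^{-1})\phi_s(x)$, and similarly for $\phi_t$. The key structural observation for the braid relation is that $M_u=\mathcal{A}\langle u_1,\dots,u_h\rangle$ and $M_v=\mathcal{A}\langle v_1,\dots,v_k\rangle$ are each invariant under both $\phi_s$ and $\phi_t$, that $M_0=M_u\oplus M_v$ is an $\mathcal{H}$-submodule, and that $M/M_0\cong\mathcal{A}$ with both operators acting as $q$ on the quotient. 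The actions on $M_v$ and $M_u$ reproduce verbatim the setup of Lemma~\ref{irreducibleRepI2m} applied to chains of lengths $k$ and $h$ (with starting colours $\{s\}$ and $\{t\}$), so $[..\phi_s\phi_t]_n=[..\phi_t\phi_s]_n$ holds on $M_v$ iff $(k+1)\mid n$ and on $M_u$ iff $(h+1)\mid n$. In particular both divisibilities are necessary.

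For sufficiency, I would extend scalars to $F=\mathbb{Q}(q)$ and construct a joint $q$-eigenvector $x'=x+\sum_{j=1}^{h}\alpha_ju_j+\sum_{j=1}^{k}\beta_jv_j\in M\otimes_{\mathcal{A}}F$ satisfying $\phi_s(x')=qx'=\phi_t(x')$. Collecting coefficients of each basis vector in the two eigenvalue equations yields the three-term recursion $(q+q^{-1})\alpha_j=\alpha_{j-1}+\alpha_{j+1}$ for $1\le j\le h$ (with conventions $\alpha_0=1$ and $\alpha_{h+1}=0$), and the analogous system for $\beta_j$ with $h$ replaced by $k$. The characteristic polynomial $t^2-(q+q^{-1})t+1$ has distinct roots $q,q^{-1}$, so the unique solution is $\alpha_j=[h+1-j]/[h+1]$ and $\beta_j=[k+1-j]/[k+1]$, where $[m]=(q^m-q^{-m})/(q-q^{-1})$; these are well defined in $F$. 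Since $\phi_s,\phi_t$ both send $x'$ to $qx'$, any length-$n$ alternating product sends $x'$ to $q^nx'$, whence $\Phi_n(x'):=[..\phi_s\phi_t]_n(x')-[..\phi_t\phi_s]_n(x')=0$. Writing $x=x'-(x'-x)$ with $x'-x\in F\otimes M_0$, and using that under the divisibility hypotheses $\Phi_n$ vanishes on $F\otimes M_0$, we conclude $\Phi_n(x)=0$ over $F$, hence over $\mathcal{A}$ by the injection $M\hookrightarrow M\otimes F$. Combined with $\Phi_n|_{M_0}=0$ this yields $\Phi_n\equiv 0$ on $M$.

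The principal technical obstacle will be the derivation of the recursion and the explicit form of $x'$: one must correctly track the slightly asymmetric initial equations at $j=1$ (coming from the $v_1$ in $\phi_s(x)$ and the $u_1$ in $\phi_t(x)$, which after rewriting become the convention $\alpha_0=\beta_0=1$), verify that imposing $\phi_s(x')=qx'$ alone leaves the free parameters that are fixed precisely by imposing $\phi_t(x')=qx'$ as well, and check that the denominators $[h+1]$ and $[k+1]$ are nonzero in $F$ — which is why the argument genuinely requires extending to the field of fractions rather than working over $\mathcal{A}$.
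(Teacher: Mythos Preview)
Your argument is correct, and the sufficiency direction is genuinely different from the paper's.

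Both proofs handle the quadratic relations and the necessity direction in the same way: $M_u$ and $M_v$ are invariant, and Lemma~\ref{irreducibleRepI2m} applied to each gives the two divisibility conditions. For sufficiency, however, the paper does not build a joint eigenvector. Instead it observes that the matrices of $\phi_s$ and $\phi_t$ are block lower-triangular with respect to the flag $M_U\subset M_U\oplus\mathcal{A}x\subset M$, and that the quotient $M/M_U$ carries precisely the $h=0$ instance of the same lemma (and symmetrically $M/M_V$ is the $k=0$ instance). These extremal cases are already known to be $\mathcal{H}$-modules, since they arise from the $W\!$-graph ideal $(D_{\{s\}},\emptyset)$; the block form of $[..H_sH_t]_n$ then forces equality with $[..H_tH_s]_n$ entrywise.

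Your eigenvector construction over $\mathbb{Q}(q)$ is a legitimate alternative. Its advantages are that it is self-contained (you do not need to invoke the $W\!$-graph ideal $(D_{\{s\}},\emptyset)$ from \cite{howvan:wgraphDetSets}) and that it yields the explicit coefficients $\alpha_j=[h{+}1{-}j]/[h{+}1]$, which is pleasant. The paper's approach, by contrast, stays over~$\mathcal{A}$ throughout, avoids solving any recursion, and makes the module-theoretic reason for the result transparent: the obstruction to the braid relation lives entirely in the two chain subquotients, and once those are handled the triangular shape does the rest.
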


\begin{proof}
Checking that \(\phi_s^2=1+(q-q^{-1})\phi_s\)
and \(\phi_t^2=1+(q-q^{-1})\phi_t\) is straightforward.

If \(h=0\) and \(m=k+1\) then \(M\) is isomorphic to the \(\mathcal{H}\)-module
\(M_\Gamma\), where \(\Gamma=\Gamma(D_{\{s\}},\emptyset)\), with
\(T_s\) acting via \(\phi_s\) and \(T_t\) acting via \(\phi_t\).
Hence \([..\phi_s\phi_t]_{k+1}=[..\phi_t\phi_s]_{k+1}\) if \(h=0\). It follows
from this that also \([..\phi_s\phi_t]_n=[..\phi_t\phi_s]_n\) whenever \(h=0\)
and \(n\) is a multiple of~\(k+1\). Similarly, \([..\phi_s\phi_t]_n=[..\phi_t\phi_s]_n\)
whenever \(k=0\) and \(n\) is a multiple of~\(h+1\). 

Turning to the general case, let \(M_U\) be the \(\mathcal{A}\)-submodule
of \(M\) spanned by \(\{u_1,u_2,\ldots,u_h\}\) and let \(M_V\) be the
\(\mathcal{A}\)-submodule of \(M\) spanned by \(\{v_1,v_2,\ldots,v_k\}\). Note
that \(M_U\) and \(M_V\) are both invariant under \(\phi_s\) and \(\phi_t\).
Let \(G_s\) and \(G_t\) be the matrices of \(\phi_s\) and \(\phi_t\) on~\(M_U\),
relative to the ordered basis \((u_h,u_{h-1},\ldots,u_1)\), and let \(F_s\) and
\(F_t\) be the matrices of \(\phi_s\) and \(\phi_t\) on~\(M_V\),
relative to the ordered basis \((v_1,v_2,\ldots,v_k)\). Then 
the matrices of \(\phi_s\) and \(\phi_t\) on \(M\) relative to the
ordered basis \((u_h,u_{h-1},\ldots,u_1,x,v_1,\ldots,v_{k-1},v_k)\) are
\[
H_s=\begin{bmatrix}
G_s&0&0\\
0&q&0\\
0&v&F_s
\end{bmatrix}
\text{\qquad and\qquad}
H_t=\begin{bmatrix}
G_t&u&0\\
0&q&0\\
0&0&F_t
\end{bmatrix}
\]
where all entries of the columns \(u\) and \(v\) are zero, except for the last entry
of \(u\) and the first entry of~\(v\), which are both~1. 

If \([..\phi_s\phi_t]_n=[..\phi_t\phi_s]_n\) then \([..G_sG_t]_n=[..G_tG_s]_n\), and
it follows by Lemma~\ref{irreducibleRepI2m} that \(h+1\) must be a divisor of~\(n\).
Similarly also \([..F_sF_t]_n=[..F_tF_s]_n\), and it follows by
Lemma~\ref{irreducibleRepI2m} that \(k+1\) must be a divisor of~\(n\). It remains to
prove that if \(h+1\) and \(k+1\) are divisors of~\(n\) then \([..H_sH_t]_n=[..H_tH_s]_n\).

Assume that \(h+1\) and \(k+1\) are divisors of~\(n\). Observe that \(\phi_s\) and
\(\phi_t\) act on the quotient module \(M/M_U\) via the following two matrices,
\[
H_s'=\begin{bmatrix}
q&0\\
v&F_s
\end{bmatrix}
\text{\qquad and\qquad}
H_t'=\begin{bmatrix}
q&0\\
0&F_t
\end{bmatrix}
\]
which are also the matrices of \(\phi_s\) and \(\phi_t\) on \(M\) in the case~\(h=0\). 
Since \([..\phi_s\phi_t]_n=[..\phi_t\phi_s]_n\) in this case, it follows that
\([..H_s'H_t']_n=[..H_t'H_s']_n\). Similarly the matrices
\[
H_s''=\begin{bmatrix}
G_s&0\\
0&q
\end{bmatrix}
\text{\qquad and\qquad}
H_t''=\begin{bmatrix}
G_t&u\\
0&q
\end{bmatrix}
\]
satisfy \([..H_s''H_t'']_n=[..H_t''H_s'']_n\). But it is clear that
\[
[..H_sH_t]_n=\begin{bmatrix}
[..G_sG_t]_n&*&0\\
0&q^n&0\\
0&*&[..F_sF_t]_n
\end{bmatrix}
=
\begin{bmatrix}
[..H_s''H_t'']_n&0\\ *&[..F_sF_t]_n
\end{bmatrix}
=\begin{bmatrix}
[..G_sG_t]_n&*\\ 0&[..H_s'H_t']_n
\end{bmatrix}
\]
where the asterisks mark entries whose values are irrelevant to our argument.
Moreover
\[
[..H_tH_s]_n=\begin{bmatrix}
[..G_tG_s]_n&*&0\\
0&q^n&0\\
0&*&[..F_tF_s]_n
\end{bmatrix}
=
\begin{bmatrix}
[..H_t''H_s'']_n&0\\ *&[..F_tF_s]_n
\end{bmatrix}
=\begin{bmatrix}
[..G_tG_s]_n&*\\ 0&[..H_t'H_s']_n
\end{bmatrix}
\]
by similar calculations, and since \([..H_s'H_t']_n=[..H_t'H_s']_n\) and
 \([..H_s''H_t'']_n=[..H_t''H_s'']_n\) it follows that
 \([..H_sH_t]_n=[..H_tH_s]_n\), as required.
\qed
\end{proof}
The following theorem gathers together the various results proved above,
and their obvious analogues obtained by swapping \(s\)~and~\(t\).

\begin{theorem}\label{dihedral}
Let \((W,S)\) be a Coxeter system of type \(I_2(m)\), and let \(S=\{s,t\}\).
Then \((\mathscr{I}\!,\,J)\) is a \(W\!\)-graph ideal if and only if
one of the following alternatives is satisfied:
\begin{itemize}[topsep=1 pt]
\item[\textup{(i)}]\((\mathscr{I}\!,\,J)=(\{1\},S)\),
\item[\textup{(ii)}]\((\mathscr{I}\!,\,J)=(D_{\{s\}},\{s\})\),
\item[\textup{(iii)}]\((\mathscr{I}\!,\,J)=(\{\,[..st]_l\mid l\leqslant k\,\},\{s\})\), where \(k+2\)
divides~\(m\),
\item[\textup{(iv)}]\((\mathscr{I}\!,\,J)=(D_{\{t\}},\{t\})\),
\item[\textup{(v)}]\((\mathscr{I}\!,\,J)=(\{\,[..ts]_l\mid l\leqslant k\,\},\{t\})\), where \(k+2\)
divides~\(m\),
\item[\textup{(vi)}]\((\mathscr{I}\!,\,J)=(W,\emptyset)\),
\item[\textup{(vii)}]\((\mathscr{I}\!,\,J)=(\{\,[..st]_l\mid l\leqslant h\,\}
\cup\{\,[..ts]_l\mid l\leqslant k\,\},\emptyset)\), where \(h+1\) and \(k+1\) divide~\(m\).
\end{itemize}
\end{theorem}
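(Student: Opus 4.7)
The plan is to enumerate the possibilities for $J\subseteq S$ and, for each, classify those ideals $\mathscr{I}$ of $(W,\leqslant\lside)$ contained in $D_J$ for which $(\mathscr{I}\!,\,J)$ is a $W\!$-graph ideal. The heavy lifting has already been carried out in the lemmas leading up to the theorem, so the proof amounts to collecting the relevant statements and verifying that the seven alternatives partition all possibilities.

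First I would dispose of $J=S$: here $D_{\{s,t\}}=\{1\}$ forces $\mathscr{I}=\{1\}$, which is trivially a $W\!$-graph ideal (the associated module is one-dimensional with each $T_r$ acting as $-q^{-1}$), giving alternative~(i). By the symmetry $s\leftrightarrow t$, it then suffices to analyse $J=\{s\}$ and $J=\emptyset$; the statements for $J=\{t\}$ follow from the corresponding ones with $s$ and $t$ interchanged.

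For $J=\{s\}$, the ideals of $(W,\leqslant\lside)$ contained in $D_{\{s\}}$ are precisely the chains $\mathscr{I}_k=\{\,[..st]_l\mid l\leqslant k\,\}$ with $0\leqslant k\leqslant m-1$. The extreme case $k=m-1$ yields $\mathscr{I}=D_J$, which is a $W\!$-graph ideal by the results of Section~\ref{sec:4} (cf.~\cite[Theorem~9.2]{howvan:wgraphDetSets}); this is alternative~(ii). For $k<m-1$, Lemma~\ref{qyz for J=s} determines every $q_{y,z}$, so the parameters $\mu_{y,w}$ and the colours $\D_J(\mathscr{I}\!,\,w)$ in the candidate graph are explicit; the resulting picture is the linear chain with $k+1$ vertices displayed above. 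By Theorem~\ref{converseMain}, $(\mathscr{I}_k,\{s\})$ is a $W\!$-graph ideal if and only if this chain affords an $\mathcal{H}$-module, which by Lemma~\ref{irreducibleRepI2m} occurs exactly when $k+2$ divides~$m$; this is alternative~(iii).

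For $J=\emptyset$, the ideals of $(W,\leqslant\lside)$ are the sets $\mathscr{I}_{h,k}$ with $0\leqslant h,k\leqslant m-1$. The full group $\mathscr{I}_{m-1,m-1}=W$ gives alternative~(vi), a $W\!$-graph ideal by the original Kazhdan--Lusztig construction (cf.\ Remark~\ref{regular biideal}). Otherwise, Lemma~\ref{mu for Ihk} identifies the $\mu_{y,w}$ and yields the ``bowtie'' graph depicted above; by Theorem~\ref{converseMain} this furnishes a $W\!$-graph ideal precisely when the endomorphisms $\phi_s,\phi_t$ of the final lemma preceding the theorem satisfy $[..\phi_s\phi_t]_m=[..\phi_t\phi_s]_m$, which by that lemma happens if and only if both $h+1$ and $k+1$ divide~$m$, yielding alternative~(vii).

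The only residual point of care is checking that the seven alternatives are mutually exclusive and genuinely exhaust the possibilities: for example, one must observe that the divisibility condition $k+2\mid m$ in~(iii) never forces $k=m-1$ (since $m\geqslant 2$ implies $m+1\nmid m$), so (ii) and~(iii) do not overlap, and similarly (vi) is disjoint from~(vii). Since all computational bottlenecks have been cleared in the preceding material, I anticipate no substantive obstacle; the genuine technical content lives inside Lemma~\ref{irreducibleRepI2m} and its $J=\emptyset$ analogue, where the braid relation is reduced (via specialisation to $q=1$ and diagonalisation of the $2\times 2$ block $F_sF_t$) to the identity $(\zeta^2)^n=1$ for $\zeta$ a primitive $2(k+2)$-th root of unity.\qed
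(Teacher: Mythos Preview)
Your approach is exactly the paper's: the theorem is a summary statement, and the paper's ``proof'' is simply the sentence that the result gathers together the preceding lemmas (Lemma~\ref{qyz for J=s}, Lemma~\ref{irreducibleRepI2m}, Lemma~\ref{mu for Ihk}, and the unnamed lemma immediately before the theorem) together with Theorem~\ref{converseMain} and the $s\leftrightarrow t$ symmetry. Your proposal spells out this collation correctly.

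One slip to fix: in the $J=\emptyset$ case you write $\mathscr{I}_{m-1,m-1}=W$, but this is false. Since $[..st]_m=[..ts]_m=w_S$ has length~$m$, the set $\mathscr{I}_{m-1,m-1}$ omits the longest element and equals $W\setminus\{w_S\}$. Thus the ideals of $(W,\leqslant\lside)$ with $J=\emptyset$ are $W$ itself \emph{together with} the $\mathscr{I}_{h,k}$ for $0\leqslant h,k\leqslant m-1$; the paper handles $W$ separately as alternative~(vi) and then assumes $\mathscr{I}\ne W$ before introducing the $\mathscr{I}_{h,k}$. Once you correct this, nothing else changes: $\mathscr{I}_{m-1,m-1}$ falls under alternative~(vii) with $h=k=m-1$ (and $m\mid m$), and your disjointness check of (vi) versus (vii) becomes consistent rather than contradicting your earlier claim.
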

Our final objective is to determine all the \(W\!\)-graph biideals in type \(I_2(m)\).
We need the following lemma.

\begin{lemma}\label{Twc1}
With \((W,S)\) as above, let \(\mathscr{I}=\{\,[..st]_l\mid l\leqslant h\,\}
\cup\{\,[..ts]_l\mid l\leqslant k\,\}\), where \(h\) and \(k\) are nonnegative
integers, and assume that \((\mathscr{I}\!,\,\emptyset)\) is a \(W\!\)-graph ideal.
Let \(C=\{\,c_w\mid w\in\mathscr{I}\,\}\) be the \(W\!\)-graph basis of the
\(\mathcal{H}\)-module \(\mathscr{S}(\mathscr{I}\!,\,\emptyset)\), and let
\(w\in \mathscr{I}\!\) with \(l(w)\leqslant \min(h,k)+1\). Then
\(T_wc_1=c_w+\sum_xq^{l(w)-l(x)}c_x\), where \(x\) runs through the set
\(\{\,x\in W\mid l(x)<l(w)\,\}\).
\end{lemma}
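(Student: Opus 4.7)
The plan is to reformulate the claim as a statement about the polynomials $q_{x,w}$ defined in Lemma~\ref{uniCbasis1}, and then verify it by induction on $l(w)$. Since the sum in the expansion $b_w = c_w + q\sum_{y<w} q_{y,w}c_y$ is empty for $w=1$, we have $b_1=c_1$, and hence $T_w c_1 = T_w b_1 = b_w$ by Remark~\ref{Rmk:T_wb_1}. The lemma therefore reduces to showing that $q_{x,w} = q^{l(w)-l(x)-1}$ for every $x\in\mathscr I$ with $x<w$. The hypothesis $l(w)\le\min(h,k)+1$ forces every $x\in W$ with $l(x)<l(w)$ to lie in $\mathscr I$, since both the $st$- and $ts$-chains in $\mathscr I$ contain all elements of length at most $\min(h,k)$, so the right-hand sum in the lemma is indeed supported on $\mathscr I$.

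Two structural features of the dihedral situation make the induction go through cleanly. First, because $J=\emptyset$ forces $D_J=W$, there are no weak ascents or descents, so only cases~(i) and~(ii) of Corollary~\ref{recursion} can arise. Second, any two elements of $W$ of lengths strictly less than $m$ are Bruhat-comparable precisely when their lengths differ, and in each length $0<\ell<m$ there are exactly two elements, one starting with $s$ and one with $t$. Combined with Lemma~\ref{mu for Ihk} (and an immediate $y=1$ computation via (A4) of Section~\ref{sec:5}), this tells us that the only $\mu_{y,z}$ we shall encounter are either $0$ or $1$, and equal $1$ precisely when $l(z)-l(y)=1$.

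Induct on $l(w)$; the case $l(w)=0$ is vacuous. For the inductive step, pick $s\in\SD(w)$ and let $x\in\mathscr I$ with $x<w$. If $s\in\A(x)$ and $x=sw$, then $q_{sw,w}=1=q^0$ by (A4). If $s\in\A(x)$ and $x\ne sw$, Lemma~\ref{lifting1} gives $x<sw$, and case~(i) with the inductive hypothesis yields $q_{x,w}=q\cdot q^{l(sw)-l(x)-1}=q^{l(w)-l(x)-1}$. If $s\in\SD(x)$, apply case~(ii) and split by $l(x)$. When $l(x)=l(w)-1$, the dihedral Bruhat order forces $q_{x,sw}=0$ (since $x$ and $sw$ are the two distinct elements of length $l(sw)$), while $q_{sx,sw}=1$, giving $q_{x,w}=1$. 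When $l(x)=l(w)-2$, we have $q_{x,sw}=1=\mu_{x,sw}$ so the first term vanishes, while $q_{sx,sw}=q$ and the inner sum is empty, giving $q_{x,w}=q$. When $l(x)\le l(w)-3$, the sum $\sum_z\mu_{x,z}q_{z,sw}$ contains exactly one nonzero term, corresponding to the unique element $z$ of length $l(x)+1$ with $s\notin\D(z)$, and the three contributions combine as $-q^{l(w)-l(x)-3}+q^{l(w)-l(x)-1}+q^{l(w)-l(x)-3}=q^{l(w)-l(x)-1}$.

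The main obstacle is the case analysis in~(ii), and specifically identifying which intermediate element~$z$ contributes to the inner sum. The essential observation is that in the dihedral group, at each length $\ell$ with $0<\ell<m$ there are exactly two elements, one beginning with each generator, so the constraint $s\notin\D(z)$ selects a unique $z$ for every admissible length. Once this single-term structure is in hand, the remaining verification amounts to routine monomial bookkeeping. The assumption $l(w)\le\min(h,k)+1$ keeps all auxiliary elements $sx$, $z$, $sw$ within $\mathscr I$ and of length strictly less than $m$, so both the dihedral Bruhat dichotomy and Lemma~\ref{mu for Ihk} remain available throughout the induction.
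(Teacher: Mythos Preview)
Your proof is correct. One cosmetic slip: it is not true that there are ``no weak ascents'' when $J=\emptyset$; weak ascents do occur at the boundary of~$\mathscr{I}$ (namely $s\in\WA(x)$ when $sx>x$ and $sx\notin\mathscr{I}$). What \emph{is} true, and what you actually use, is that $\WD_J(\mathscr{I},x)=\emptyset$ for all~$x$, so case~(iii) of Corollary~\ref{recursion} never arises. Since case~(i) applies uniformly to all ascents (strong or weak), your dichotomy $s\in\A(x)$ versus $s\in\SD(x)$ is valid and the argument goes through.

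Your route differs from the paper's in organisation. The paper works directly with the $W$-graph action: it writes the inductive hypothesis as a sum over right $\{1,t\}$-cosets, $T_vc_1=\sum_{x\in\mathcal{E}}q^{l(v)-l(x)-1}(qc_x+c_{tx})$, and observes that $T_s(qc_x+c_{tx})=qc_{tx}+c_{stx}$ for $x\ne 1$, so the whole sum shifts cleanly under~$T_s$. This coset-grouping trick avoids the case split on $l(x)$ altogether. Your approach instead translates the claim into a statement about the polynomials $q_{x,w}$ and verifies the recursion of Corollary~\ref{recursion} term by term, which is more mechanical but requires tracking three sub-cases when $s\in\SD(x)$. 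Both methods rest on the same structural input (Lemma~\ref{mu for Ihk} and the dihedral Bruhat order), so the difference is one of packaging rather than of substance; the paper's version is shorter, yours is perhaps easier to verify line by line.
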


\begin{proof}
Note first that \(\mathscr{I}\) contains all elements of \(W\) such that
\(l(w)\leqslant\min(h,k)\), and hence contains all \(x\) such that \(l(x)<l(w)\).

We use induction on \(l(w)\). If \(l(w)=0\) the statement becomes \(T_1c_1=c_1\),
which is true since \(T_1\) is the identity element of~\(\mathcal{H}\). So
assume that \(l(w)=l>0\), and let \(w=rv\) with \(r\in\{s,t\}\) and \(l(v)=l-1\).
Since the proofs for the two cases are essentially the same, we shall only do the
case \(r=s\).

Recall that the edge weights for \(\Gamma(\mathscr{I}\!,\,\emptyset)\)
were found in Lemma~\ref{mu for Ihk}. This makes it easy to evaluate \(T_sc_x\)
for all~\(x\in\mathscr{I}\). In particular, \(T_sc_1=qc_1+c_s\). This shows that
the desired formula holds when \(w=sv\) and \(v=1\). So henceforth we assume that
\(v\ne 1\). Note that since \(l(sv)>l(v)\) it follows that \(l(tv)<l(v)\).

Observe that \(\{v\}\cup\{\,x\in W\mid l(x)<l(v)\,\}\) is a union of right cosets
of the group \(\{1,t\}\), namely those cosets whose minimal element has length
\(l-2\) or less. So the inductive hypothesis can be written as
\[
T_vc_1=\sum_{x\in\mathcal{E}}q^{l(v)-l(x)-1}(qc_{x}+c_{tx}),
\]
where \(\mathcal{E}=\{\,x\in D_{\{t\}}^{-1}\mid l(x)\leqslant l-2\,\}\). Similarly,
the set \(\{w\}\cup\{\,x\in W\mid l(x)<l(w)\,\}\) is a union of right cosets
of \(\{1,s\}\). Writing \(\mathcal{F}=\{\,x\in D_{\{s\}}^{-1}\mid l(x)\leqslant l-1\,\}\),
our aim is to show that 
\[
\postdisplaypenalty=10000
T_wc_1=\sum_{x\in\mathcal{F}}q^{l(w)-l(x)-1}(qc_{x}+c_{sx}).
\]
Observe that \(\{\,tx\mid x\in\mathcal{E}\,\}=\mathcal{F}\setminus\{1\}\).

If \(x\in \mathcal{E}\) and \(x\ne 1\) then \(\D(x)=\{s\}\) and \(\D(tx)=\{t\}\). Note
also that \(stx\in\mathscr{I}\), since either \(l(stx)<l(w)\) or \(stx=w\). So
\begin{align*}
T_s(qc_x+c_{tx})&=-c_x+(qc_{tx}+c_{stx}+c_x)\\
&=qc_{tx}+c_{stx}.
\end{align*}
When \(x=1\) we get \(T_s(qc_x+c_{tx})=T_s(qc_1+c_t)=q^2c_1+qc_s+qc_t+c_{st}\). So
\begin{align*}
T_wc_1=T_s(T_vc_1)&=q^{l(v)-1}(q^2c_1+qc_s+qc_t+c_{st})
+\sum_{x\in\mathcal{E}\setminus\{1\}}q^{l(v)-l(x)-1}(qc_{tx}+c_{stx})\\
&=q^{l(w)-1}(qc_1+c_s)+q^{l(w)-2}(qc_t+c_{st})
+\sum_{y\in\mathcal{F}\setminus\{1,t\}}q^{l(v)-l(y)}(qc_{y}+c_{sy})\\
&=\sum_{y\in\mathcal{F}}q^{l(w)-l(y)-1}(qc_{y}+c_{sy})
\end{align*}
as required.
\qed
\end{proof}

\begin{proposition}\label{isbiideal}
Let \((W,S)\) be a Coxeter system of type \(I_2(m)\), with \(S=\{s,t\}\). Let
\(k\) be a nonnegative integer such that \(k+1\) divides~\(m\), and let
\(\mathscr{I}=\{\,w\in W\mid l(w)\leqslant k\,\}\). Then \((\mathscr{I},\emptyset,\emptyset)\)
is a \(W\!\)-graph biideal.
\end{proposition}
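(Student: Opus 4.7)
The plan is to invoke Lemma~\ref{symmetryBasis}. Theorem~\ref{dihedral}(vii), applied with $h = k$ (so that the single hypothesis $k+1 \mid m$ discharges both divisibility conditions), shows that $(\mathscr{I}, \emptyset)$ is a $W\!$-graph ideal; and since $\mathscr{I}$ is closed under inversion (being defined purely by length), the right-sided analog of the theory yields $(\mathscr{I}, \emptyset)$ as a $W\!$-graph right ideal. Identifying the two standard bases via $b_w = b_w^{op}$, so that $\mathscr{S} := \mathscr{S}(\mathscr{I}, \emptyset) = \mathscr{S}\opp(\mathscr{I}, \emptyset)$ is a single $\mathcal{A}$-module carrying both actions, Lemma~\ref{symmetryBasis} reduces the biideal claim to verifying $T_w b_1 = b_1 T_w$ for every $w \in W$. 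For $w \in \mathscr{I}$ this is immediate, as both sides equal $b_w$ by Remark~\ref{Rmk:T_wb_1} and its right-sided analog (as noted in the preamble to Lemma~\ref{symmetryBasis}).

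The substantive case is $w \in W \setminus \mathscr{I}$, i.e.\ $l(w) > k$. The plan is to compute both sides of the claimed equality in the $c$-basis of $\mathscr{S}$, using Lemma~\ref{Twc1} (which supplies the expansion $b_u = c_u + \sum_{l(x) < l(u)} q^{l(u) - l(x)} c_x$ for every $u \in \mathscr{I}$, and in particular gives $b_1 = c_1$) together with the $W\!$-graph action furnished by Theorem~\ref{main1}. For a reduced expression $w = s_1 \cdots s_\ell$, set $u = s_1 \cdots s_k$ and $v = s_{\ell - k + 1} \cdots s_\ell$; both belong to $\mathscr{I}$, so that $T_w b_1 = T_{s_1} \cdots T_{s_{\ell-k}}\, b_v$ and $b_1 T_w = b_u\, T_{s_{k+1}} \cdots T_{s_\ell}$. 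In the base case $\ell = k+1$, the identity reduces to $T_{s_1} b_v = b_u T_{s_{k+1}}$; expanding both sides in the $c$-basis via Lemma~\ref{Twc1} and Theorem~\ref{main1} (left side) and its right-sided analog (right side), and matching coefficient-by-coefficient, the equality emerges from the $s \leftrightarrow t$ symmetry of the graph $\Gamma(\mathscr{I}, \emptyset)$, which (because $h = k$) exchanges the left- and right-sided formulas for the polynomials $q_{y,w}$ through the involution $w \mapsto w^{-1}$. The cases $\ell > k+1$ follow by iterating the base-case identification one boundary reflection at a time.

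The main obstacle is the explicit base-case match: writing out $T_{s_1} b_v$ and $b_u T_{s_{k+1}}$ in the $c$-basis and verifying that they produce the same linear combination. While the bookkeeping requires care, the decisive structural input is Lemma~\ref{Twc1}, whose formula depends only on the lengths of elements and hence on the $s \leftrightarrow t$-symmetric data of $\mathscr{I}$; this guarantees that the left-handed and right-handed computations of the $c$-basis expansions produce matching coefficients, and the bimodule property follows.
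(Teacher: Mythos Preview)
Your reduction to Lemma~\ref{symmetryBasis} is sound, and the setup through the case $w\in\mathscr{I}$ is correct. But the argument for $w\notin\mathscr{I}$ has a real gap in the induction on $\ell=l(w)$.

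The problem is the step ``the cases $\ell>k+1$ follow by iterating the base-case identification one boundary reflection at a time.'' This does not work. Concretely: suppose you know $T_{w'}b_1=b_1T_{w'}$ for all $w'$ with $l(w')\leqslant\ell$, and take $w=rw'$ with $r\in S$ and $l(w')=\ell$. Then $T_wb_1=T_r(T_{w'}b_1)=T_r(b_1T_{w'})$, and you want this to equal $b_1T_w=(b_1T_r)T_{w'}=(T_rb_1)T_{w'}$. But passing from $T_r(b_1T_{w'})$ to $(T_rb_1)T_{w'}$ is exactly the bimodule associativity you are trying to prove. Equivalently, the set $\{\,h\in\mathcal{H}\mid hb_1=b_1h\,\}$ is an $\mathcal{A}$-submodule but not a priori a subalgebra, so knowing the identity for all $T_w$ with $l(w)\leqslant k+1$ does not propagate upward. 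You would genuinely have to compute $T_wb_1$ and $b_1T_w$ in the $c$-basis for every $w\in W$ separately, which your proposal does not do.

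The paper avoids this entirely by a different device. After establishing that $c_w\opp=c_w$ for all $w\in\mathscr{I}$ (via Lemma~\ref{Twc1} and its right-hand analogue, whose formulas depend only on lengths), it observes that the explicit $c$-basis formulas for the left and right actions of $T_s,T_t$ are independent of the ambient parameter~$m$. But in the special case $m=k+1$ one has $\mathscr{I}=W\setminus\{w_S\}$, which is a \(W\!\)-graph subbiideal of $(W,\emptyset,\emptyset)$ by Theorem~\ref{subbiideal}, so the actions commute there; since the formulas are the same for every admissible $m$, they commute in general. This ``independence of $m$'' trick replaces the unbounded family of verifications your approach would require with a single structural observation.
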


\begin{proof}
By case (vii) in Theorem~\ref{dihedral} we know that \((\mathscr{I},\emptyset)\) is a  
\(W\!\)-graph ideal, and since \(\mathscr{I}=\mathscr{I}^{-1}\) it follows that
\((\mathscr{I},\emptyset)\) is also a \(W\!\)-graph right ideal. Identifying
\(\mathscr{S}\opp(\mathscr{I}\!,\,\emptyset)\) with
\(\mathscr{S}(\mathscr{I}\!,\,\emptyset)\) by putting \(b_w\opp = b_w\)
for all~\(w\in\mathscr{I}\), the task is to show that the left and right actions
of \(\mathcal{H}\) commute.

Note that if \(k=m-1\) then \(\mathscr{I}=W\setminus\{w_S\}\), where \(w_S=[..st]_m\)
is the longest element of~\(W\). But \((W,\emptyset,\emptyset)\) is a
\(W\!\)-graph biideal, by Remark~\ref{regular biideal}, and \(\{c_{W_S}\}\) is closed
for both the left and right actions. So it follows from Theorem~\ref{subbiideal}
that \((\mathscr{I},\emptyset,\emptyset)\) is a \(W\!\)-graph biideal in this case.

Since the standard basis and \(W\!\)-graph basis of
\(\mathscr{S}(\mathscr{I}\!,\,\emptyset)\) are related by the rule that \(b_w=T_wc_1\)
for all \(w\in\mathscr{I}\), it follows from Proposition~\ref{Twc1} that
\(b_w=c_w+\sum_{v<w}q^{l(w)-l(v)}c_v\) for all \(w\in\mathscr{I}\!\). The right
ideal analogue of Proposition~\ref{Twc1} gives
\(b_w\opp=c_w\opp+\sum_{v<w}q^{l(w)-l(v)}c_v\opp\) for all \(w\in\mathscr{I}\!\).
Since \(b_w\opp=b_w\), we must have \(c_w\opp=c_w\) for all~\(w\in\mathscr{I}\!\).

The left and right actions of \(T_s\) and \(T_t\) are given by rules that are independent
of the value of~\(m\). For example, for all \(w\in\mathscr{I}\!\),
\[
T_sc_w=\begin{cases}
-q^{-1}c_w &\text{if the reduced expression for \(w\) starts with~\(s\),}\\
qc_1+c_s &\text{if \(w=1\),}\\
qc_t+c_{st} &\text{if \(w=t\),}\\
qc_w+c_{sw}+c_{tw} &\text{if the reduced expression for \(w\) starts with~\(t\) and \(1<l(w)<k\),}\\
qc_w+c_{tw} &\text{if the reduced expression for \(w\) starts with~\(t\) and \(l(w)=k\).}
\end{cases}
\]
If it happens that \(m=k+1\) then, as we have seen,
\((\mathscr{I},\emptyset,\emptyset)\) is a \(W\!\)-graph biideal, and so the
left and right actions commute. Since the value of~\(m\) is irrelevant, the left and
right actions always commute.
\qed
\end{proof}

\begin{proposition}\label{notbiideal}
Let \((W,S)\) be a Coxeter system of type \(I_2(m)\), with \(S=\{s,t\}\). Let
\(h\) and \(k\) be integers in \(\{1,2,\ldots,m-1\}\) with \(|h-k|=1\).
Let \(\mathscr{I}=\{\,[..st]_l\mid l\leqslant h\,\}
\cup\{\,[..ts]_l\mid l\leqslant k\,\}\). Then \((\mathscr{I}\!,\,\emptyset,\emptyset)\)
is not a \(W\!\)-graph biideal.
\end{proposition}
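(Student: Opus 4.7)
If $h + 1$ or $k + 1$ fails to divide~$m$, then by Theorem~\ref{dihedral}(vii) the pair $(\mathscr{I}, \emptyset)$ is not even a $W$-graph ideal, so \emph{a fortiori} not a $W$-graph biideal, and we are done. Henceforth assume both $h+1$ and $k+1$ divide~$m$, and, interchanging $s$ and~$t$ if necessary, assume $k = h + 1$.

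Argue by contradiction: suppose $(\mathscr{I}\!,\,\emptyset,\emptyset)$ is a biideal. Then Theorem~\ref{extra-main-wg} yields a $(W \times W\opp)$-graph $\Gamma = (C, \mu, \tau)$ with $C = \{\,c_w \mid w \in \mathscr{I}\,\}$ and $\tau(c_w) = \mathcal{L}(w) \sqcup \mathcal{R}(w)\opp$. In particular the operators $T_r$ and $T_{r'\opp}$ (for $r,\,r' \in S$) must commute on $\mathcal{A}C$, this being the only nontrivial braid relation in $W \times W\opp$ between $S$ and~$S\opp$. The plan is to exhibit a specific vertex $v \in C$ together with a choice of $r,\,r' \in S$ witnessing a failure of this commutativity.

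Concretely, when $h$ is odd I would take $v = c_{[..ts]_h}$, which has colour $\{s, s\opp\}$ since $[..ts]_h$ begins and ends in~$s$, together with $(r, r') = (t, t)$; when $h$ is even, $v = c_{[..st]_h}$, of colour $\{s, t\opp\}$, with $(r, r') = (t, s)$. To evaluate $T_r v$, $T_{r'\opp} v$ and the two composites via Theorem~\ref{main1} and its right-ideal analogue, I would determine the relevant $\mu_{y, w}$ by iterating Corollary~\ref{recursion} from the base case $q_{w, sw} = 1$. Corollary~\ref{ywmorethan3}, together with the rank-$2$ fact that each non-identity element of~$W$ has a unique left descent, confines the possible nonzero $\mu$'s sharply: only length-adjacent pairs such as $\mu_{[..ts]_{l-1}, [..ts]_l}$ and the cross terms $\mu_{[..st]_l, [..ts]_{l+1}}$ survive, all equal to~$1$. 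Substituting and subtracting, I expect the two composites to cancel on every basis vector except one, with the difference $T_r T_{r'\opp} v - T_{r'\opp} T_r v$ equal to $\pm c_{[..st]_h}$ when $h$ is odd and $\pm c_{[..ts]_h}$ when $h$ is even. Both $[..st]_h$ and $[..ts]_h$ lie in $\mathscr{I}$ for $h \geq 1$, so the difference is a nonzero basis vector of $\mathcal{A}C$, contradicting the biideal hypothesis.

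The hard part will be the combinatorial bookkeeping: along each recursion chain one must verify that all the ``distant'' $\mu$-values vanish, so that the final discrepancy reduces to a single basis vector rather than a longer combination in which the desired term could conceivably cancel against others.
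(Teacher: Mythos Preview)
Your proposal is correct and follows the same strategy as the paper: assume for contradiction that $(\mathscr{I},\emptyset,\emptyset)$ is a biideal, then exhibit a basis element and a pair of generators for which the left and right $\mathcal{H}$-actions fail to commute. In the case $h$ even your choice of element and generators actually coincides with the paper's; for $h$ odd you choose $[..ts]_h$ with $(T_t,T_t)$ while the paper takes $[..st]_h$ with $(T_s,T_s)$, but these are mirror-image computations and both succeed.

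Two remarks on execution. First, the ``hard part'' you anticipate is lighter than you suggest: the paper's Lemma~\ref{mu for Ihk} already records that in this rank-$2$ setting the only non-superfluous edges join length-adjacent vertices with weight~$1$, so you can cite that directly rather than re-deriving it from Corollary~\ref{recursion}. With this in hand, each of $T_rc_v$, $c_vT_{r'}$, and the two composites is a short explicit sum, and the discrepancy is visible by inspection (the paper carries out exactly this for the general case and the small cases $k=2,3$). Second, for the right action you implicitly need $\mu^{\mathrm o}_{y,w}=\mu_{y,w}$; this is supplied by Proposition~\ref{involutionsequal} and the discussion following it, so it is worth citing that rather than leaving it tacit.
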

\begin{proof}
Suppose, for a contradiction, that \((\mathscr{I}\!,\,\emptyset,\emptyset)\) is
a \(W\!\)-graph biideal. It is obvious that essentially the same proof will
apply whether \(h=k-1\) or \(k=h-1\). So we assume that \(h=k-1\), which means that
\([..st]_k\) is not in \(\mathscr{I}\) and \([..ts]_k\) is in \(\mathscr{I}\).
Let \(\{\,c_w\mid w\in\mathscr{I}\,\}\) be the \((W\times W\opp)\)-graph basis
of the \((\mathcal{H},\mathcal{H})\)-bimodule
\(M=\mathscr{S}(\mathscr{I}\!,\,\emptyset,\emptyset)\).

Put \(w=[..st]_{k-1}\), and suppose first that \(k\) is even. We shall show that
\((T_sc_w)T_s\ne T_s(c_wT_s)\), contradicting the fact that \(M\) is a bimodule.
In the first instance we assume that \(k>2\), although the calculations are much
the same in the case \(k=2\). Given that \(k>2\) the reduced expression for
\(w\) starts with~\(t\) and ends with~\(t\), and there is at least one \(s\) in
between. Observe that \(c_wT_s=qc_w+c_{wt}+c_{ws}\) but
\(T_sc_w=qc_w+c_{tw}\), since \(sw\notin\mathscr{I}\). Note also that
\(ws\) is the longest element of~\(\mathscr{I}\). So we find that
\[
(T_sc_w)T_s=qc_wT_s+c_{tw}T_s=q(qc_w+c_{ws}+c_{wt})+(qc_{tw}+c_{tws}+c_{twt}),
\]
whereas
\[
T_s(c_wT_s)=qT_sc_w+T_sc_{wt}+T_sc_{ws}
=q(qc_w+c_{tw})+(qc_{wt}+c_{swt}+c_{twt})+(qc_{ws}+c_{tws}).
\]
The two expressions are not equal: the second features a \(c_{swt}\) that does
not appear in the first.
If \(k=2\) then we find that
\[
(T_sc_t)T_s=(qc_t+c_1)T_s=q(qc_t+c_{ts})+qc_1+c_s,
\]
whereas
\[
T_s(c_tT_s)=T_s(qc_t+c_1+c{ts})=q(qc_t+c_{st})+(qc_1+c_s)+(qc_{ts}+c_s),
\]
and again the two expressions are not equal.

When \(k\) is odd similar calculations show that \((T_tc_w)T_s\ne T_t(c_wT_s)\).
If \(k=3\) then
\[
(T_tc_{st})T_s=(qc_{st}+c_t)T_s=q(qc_{st}+c_{sts}+c_{s})+(qc_{t}+c_{ts})
\]
whereas
\[
T_t(c_{st}T_s)=T_t(qc_{st}+c_{sts}+c_s)=q(qc_{st}+c_t)+(qc_{sts}+c_{ts})+(qc_s+c_{ts}),
\]
and if \(k\geqslant 5\) then
\[
(T_tc_{w})T_s=(qc_{w}+c_{sw})T_s=q(qc_{w}+c_{ws}+c_{wt})+(qc_{sw}+c_{swt}+c_{sws})
\]
whereas
\[
T_t(c_{w}T_s)=T_t(qc_{w}+c_{ws}+c_{wt})=q(qc_{w}+c_{sw})+(qc_{ws}+c_{sws})+(qc_{wt}+c_{twt}+c_{swt}).
\]
A contradiction has been obtained in all cases.
\qed
\end{proof}

\begin{theorem}
Let \((W,S)\) be a Coxeter system of type \(I_2(m)\), and let \(S=\{s,t\}\).
Then \((\mathscr{I}\!,\,J,K)\) is a \(W\!\)-graph biideal if and only if
one of the following alternatives is satisfied:
\begin{itemize}[topsep=1 pt]
\item[\textup{(i)}]\((\mathscr{I}\!,\,J,K)=(W,\emptyset,\emptyset)\),
\item[\textup{(ii)}]\((\mathscr{I}\!,\,J,K)=(\{\,w\in W\mid l(w)\leqslant k\,\},\emptyset,\emptyset)\),
where \(k+1\) divides~\(m\),
\item[\textup{(iii)}]\((\mathscr{I}\!,\,J,K)=(\{1,t\},\emptyset,\emptyset)\) and \(m\) is even,
\item[\textup{(iv)}]\((\mathscr{I}\!,\,J,K)=(\{1,s\},\emptyset,\emptyset)\) and \(m\) is even,
\item[\textup{(v)}]\(\mathscr{I}=\{1\}\) and \(m\) is even, and \(J,\,K\) are any subsets of~\(S\),
\item[\textup{(vi)}]\(\mathscr{I}=\{1\}\) and \(m\) is odd, and \(J,\,K\in\{\emptyset,S\}\).
\end{itemize}
\end{theorem}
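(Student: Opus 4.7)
The plan is a case-by-case analysis organised around the classification of $W$-graph ideals in Theorem~\ref{dihedral}. Any biideal $(\mathscr{I}, J, K)$ must have $(\mathscr{I}, J)$ on the list in Theorem~\ref{dihedral}, and $(\mathscr{I}^{-1}, K)$ must likewise be a $W$-graph ideal so that $(\mathscr{I}, K)$ is a $W$-graph right ideal. Moreover, by Remark~\ref{JandK}, if $J \cup K \neq \emptyset$ then $\mathscr{I} \subseteq W_{S \setminus (J \cup K)}$, which in rank~$2$ forces $\mathscr{I}$ to be contained in $\{1, t\}$, $\{1, s\}$, or $\{1\}$.

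I would first handle two easy cases. When $\mathscr{I} = W$ we must have $J = K = \emptyset$, and Remark~\ref{regular biideal} gives case~(i). When $\mathscr{I} = \{1\}$ the module $\mathscr{S}(\{1\}, J)$ is one-dimensional: each $T_s$ acts on $b_1$ by a scalar ($-q^{-1}$ if $s \in J$, and $q$ otherwise) under the left action, and similarly under the right action; since scalars commute, the bimodule axiom is automatic. Enumerating the pairs $(\{1\}, J)$ that appear in Theorem~\ref{dihedral} yields any $J \subseteq S$ when $m$ is even, and only $J \in \{\emptyset, S\}$ when $m$ is odd, and similarly for $K$; this gives cases~(v) and~(vi).

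Next I would treat the main case $J = K = \emptyset$ with $\mathscr{I} \notin \{W, \{1\}\}$. By Theorem~\ref{dihedral}(vii), $\mathscr{I}$ equals some $\mathscr{I}_{h,k}$ with $h+1$ and $k+1$ dividing~$m$. A direct computation of inverses, using $([..st]_l)^{-1} = [..ts]_l$ for even~$l$ and $([..st]_l)^{-1} = [..st]_l$ for odd~$l$, shows that $\mathscr{I}_{h,k}^{-1}$ is an ideal of $(W, \leqslant\lside)$ if and only if $|h-k| \leqslant 1$, in which case $\mathscr{I}_{h,k}^{-1} = \mathscr{I}_{h,k}$. When $h = k$, the ideal $\mathscr{I}_{k,k} = \{w : l(w) \leqslant k\}$ is covered by Proposition~\ref{isbiideal}, giving case~(ii). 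When $|h-k| = 1$ with $h, k \geqslant 1$, Proposition~\ref{notbiideal} rules out the biideal. The surviving subcases are $\{h, k\} = \{0, 1\}$, which give $\mathscr{I} \in \{\{1, s\}, \{1, t\}\}$ with $m$ necessarily even; these I would verify to be biideals by invoking Theorem~\ref{subbiideal} applied to the ambient biideal $(\{w : l(w) \leqslant 1\}, \emptyset, \emptyset)$ (valid by case~(ii) since $2 \mid m$), noting that $\{c_s\}$ and $\{c_t\}$ are each two-sidedly closed because the only vertex each is joined to is $c_1$, whose $\tau$-set is empty and hence contained in theirs. This yields cases~(iii) and~(iv).

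The final configuration to address has $J \cup K \neq \emptyset$ and $\mathscr{I} \in \{\{1, s\}, \{1, t\}\}$. This is the main obstacle, because for suitable $m$ the relevant pairs $(\mathscr{I}, J)$ are genuine $W$-graph ideals arising from cases~(ii) or~(iii) of Theorem~\ref{dihedral}, so the biideal must be excluded by the failure of the bimodule axiom rather than by the absence of the underlying ideal. Mirroring the example following Lemma~\ref{symmetryBasis}, I would compute $(T_s c_1) T_t$ and $T_s (c_1 T_t)$ explicitly for $\mathscr{I} = \{1, t\}$ with $J$ or $K$ equal to $\{s\}$, using Theorem~\ref{main1} together with the descent data obtained from Theorem~\ref{dihedral}, and exhibit a discrepancy in the coefficient of~$c_t$; a symmetric argument disposes of $\mathscr{I} = \{1, s\}$. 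Combining the four paragraphs shows that cases~(i)--(vi) exhaust the $W$-graph biideals.
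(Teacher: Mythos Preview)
Your approach is essentially the same as the paper's: both organise the argument around Theorem~\ref{dihedral}, invoke Propositions~\ref{isbiideal} and~\ref{notbiideal} for the main cases, and finish with explicit commutator computations to eliminate the remaining $J\cup K\neq\emptyset$ configurations. Two small points: (1) your assertion that $\mathscr{I}_{h,k}^{-1}=\mathscr{I}_{h,k}$ whenever $|h-k|\leqslant 1$ is not quite right---for instance $\mathscr{I}_{2,1}^{-1}=\mathscr{I}_{1,2}$---though this is harmless since $\mathscr{I}_{h,k}^{-1}$ is still a $W$-graph ideal with the same divisibility constraints, and the paper bypasses this by observing directly that a two-sided order ideal containing an element of length~$l$ must contain every element of smaller length; (2) your verification of cases (iii) and (iv) via Theorem~\ref{subbiideal} applied to the ambient biideal $\{1,s,t\}$ is a nice alternative to the paper's direct check that the left and right actions of $T_s$ and $T_t$ coincide on $\{c_1,c_t\}$.
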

\begin{proof}
Let us first check that \((\mathscr{I}\!,\,J,K)\) is a \(W\!\)-graph biideal if it is in
the list. For case~(i) Remark~\ref{regular biideal} applies, and for case~(ii)
Proposition~\ref{isbiideal} applies. For case~(iii), observe that
\((\mathscr{I}\!,\,J)=(\{1,t\},\emptyset)\)
is a \(W\!\)-graph ideal by case~(vii) of Theorem~\ref{dihedral}, since \(m\) is
even. Since \(\mathscr{I}=\mathscr{I}^{-1}\), it is also a \(W\!\)-graph right ideal.
Observe that \(T_s\) acts as scalar multiplication by~\(q\), in both the left action
and the right action. Moreover, the left action of \(T_t\) is the same as the right
action. So the left and right \(\mathcal{H}\)-actions commute, as required. Case~(iv)
is the same as case~(iii), and cases (v)~and~(vi) are trivial.

It remains to prove that there are no others. So assume that
\((\mathscr{I}\!,\,J,K)\) is a \(W\!\)-graph biideal. Since \(\mathscr{I}\) has to be
an ideal of \((W,\leqslant\lside)\) and of \((W,\leqslant\rside)\) we see that if \(\mathscr{I}\)
contains some element of length~\(l\) then it must contain all \(2l-1\) elements
of length less than~\(l\). So clearly we must have
\(\mathscr{I}=\{\,[..st]_l\mid l\leqslant h\,\}\cup\{\,[..ts]_l\mid l\leqslant k\,\}\) for
some integers \(h\) and \(k\), with either \(h=k\) or \(|h-k|=1\).

Assume first that \(\min(h,k)\geqslant 1\). Then both \(s\) and \(t\) are in \(\mathscr{I}\),
and Remark~\ref{JandK} shows that \(J=K=\emptyset\).
So Proposition~\ref{notbiideal} shows that \(h=k\), and since \((\mathscr{I}\!,\,J)\)
is a \(W\!\)-graph ideal it follows from Theorem~\ref{dihedral} that either
\(\mathscr{I}=W\) or \(k+1\) is a divisor of~\(m\). So the only possibilities
correspond to case~(i) and case~(ii) in the theorem statement.

Obviously \(h=k=0\) gives case~(v) or case~(vi) of the theorem statement. So it remains to
consider the possibilities that \(h=0\) and \(k=1\), giving \(\mathscr{I}=\{1,s\}\),
or \(h=1\) and \(k=0\), giving \(\mathscr{I}=\{1,t\}\). Since \(h+1\) and \(k+1\) have
to be divisors of~\(m\), it follows that \(m\) must be even. If 
\(J=K=\emptyset\) then we obtain cases (iii)~and~(iv) of the theorem statement.
We must show that all other cases lead to contradictions.

Suppose first that \(\mathscr{I}=\{1,s\}\). Then \(s\notin J\) and \(s\notin K\), 
and since \(J\) and \(K\) are not both empty, one or other must be~\(\{t\}\). Let
\(\{c_1,c_s\}\) be the \((W\times W\opp)\)-graph basis of the bimodule
\(\mathscr{S}(\mathscr{I}\!,\,J,K)\). If \(J=\{t\}\) then
\[
(T_tc_1)T_s=(-q^{-1}c_1)T_s=-q^{-1}(qc_1+c_s)\ne -c_1+qc_s=T_t(qc_1+c_s)=T_t(c_1T_s),
\]
while if \(K=\{t\}\) then 
\[
T_s(c_1T_t)=T_s(-q^{-1}c_1)=-q^{-1}(qc_1+c_s)\ne -c_1+qc_s=(qc_1+c_s)T_t=(T_sc_1)T_t.
\]
So in either case we have a contradiction. A similar argument disposes of
\(\mathscr{I}=\{1,t\}\).
\qed
\end{proof}

\end{document}